\numberwithin{equation}{section}
\DeclareMathOperator{\dom}{dom}
\DeclareMathOperator{\dist}{dist}
\DeclareMathOperator*{\argmin}{arg\,min}
\DeclareMathOperator{\Div}{div}
\DeclareMathOperator{\SRBGS}{SRBGS}
\newcommand{\bds}{\boldsymbol}
\newcommand{\mF}{\mathcal{F}}
\algnewcommand\INPUT{\item[\textbf{Input:}]}%
\algnewcommand\OUTPUT{\item[\textbf{Output:}]}%
\DeclareMathAlphabet{\mathbfit}{OML}{cmm}{b}{it}
\definecolor{Red}{rgb}{0.8,0,0}
\definecolor{Blue}{rgb}{0,0,0.8}
\definecolor{Green}{rgb}{0,0.4,0.4}
\definecolor{darkblue} {rgb} {0.00, 0.0, 1.0}
\title{A Preconditioned Difference of Convex Algorithm for Truncated Quadratic Regularization with Application to Imaging
}
\titlerunning{Preconditioned DCA for truncated quadratic regularization}        
\author{Shengxiang Deng         \and
	Hongpeng Sun 
}
\institute{
	Shengxiang Deng \at
	Institute for Mathematical Sciences,
	Renmin University of China, Beijing, China.\\
	\email{2018103581@ruc.edu.cn}           
	\and
	Hongpeng Sun \at
	Institute for Mathematical Sciences,
	Renmin University of China, Beijing, China.\\
	\email{hpsun@amss.ac.cn}    
}
\date{Received: date / Accepted: date}
\begin{document}

\maketitle

\begin{abstract}
	We consider the minimization problem with the truncated quadratic regularization, which is a nonsmooth and nonconvex problem. We cooperated the classical preconditioned iterations for linear equations into the nonlinear difference of convex functions algorithms with extrapolation. Especially, our preconditioned framework can deal with the large linear system efficiently which is usually expensive for computations.  Global convergence is guaranteed and local linear convergence rate is given based on the analysis of the Kurdyka-\L ojasiewicz exponent of the minimization functional. The proposed algorithm with preconditioners turns out to be very efficient for image restoration and is also appealing for image segmentation. 
\end{abstract}

\keywords{nonconvex optimization \and image restoration \and difference of convex functions algorithm (DCA)\and linear preconditioning techniques \and Kurdyka-\L ojasiewicz analysis}
\subclass{65K10 \and 49J52 \and 49M15}


\section{Introduction}\label{sec:intro}
 In this paper, we consider the truncated quadratic regularization with gradient operator for image restoration and segmentation
 \begin{align}
 &\argmin_{\boldsymbol{x} \in X} F(\bds{x}) = f(\bds{x})+ P^I(\bds{x}), \quad {P^I(\bds{x}):= \sum_{i=1}^m \sum_{j=1}^n\frac{\mu}{2}\min(|(\nabla \bds{x})_{i,j}|^2, \frac{\lambda}{\mu })},\label{equation:iso} \tag{ITQ} \\
 &\argmin_{ \bds{x} \in X} F(\bds{x}) = f(\bds{x}) + P^A(\bds{x}), \quad {P^A(\bds{x}):=\sum_{i=1}^m \sum_{j=1}^n \frac{\mu}{2}\sum_{l=1}^2\min(|(\nabla_l \bds{x})_{i,j}|^2, \frac{\lambda}{\mu })}, \label{equation:ani} \tag{ATQ}
 \end{align}
 { where $\lambda$ and $\mu$ are positive constants, $X:=\mathbb{R}^{m\times n} $ is a finite dimensional discrete image space, $\nabla = [\nabla_1, \nabla_2]^T$ and $f(\bds{x}) := \|A\bds{x} -\bds{x}_0\|_{2}^2/2$ with $A:X \rightarrow Y_0=\mathbb{R}^{m_0\times n_0} $ being a linear and bounded operator and $\bds{x}_0$ being the noisy or degraded image. Here and subsequently, the $|\cdot|$ norm denotes the usual Euclid norm which is also the length of the corresponding vector. For example, $|(\nabla \bds{x})_{i,j}|=\sqrt{(\nabla_1 \bds{x})_{i,j}^2 +(\nabla_2 \bds{x})_{i,j}^2  }$  for the isotropic case in \eqref{equation:iso} and $|(\nabla_l \bds{x})_{i,j}|$ is the absolute value of $(\nabla_l \bds{x})_{i,j}$ with $l=1$ or $l=2$ in \eqref{equation:ani}. } $P^I$ or $P^A$ is the isotropic or anisotropic truncated quadratic regularizations (abbreviated as ITQ or ATQ). The truncated quadratic (also called as half-quadratic) regularization has various applications in signal, image processing and computer vision \cite{AIG, AA, BVZ, BZ, GY,SC}. 
It was originated from the maximal posterior estimates for the Markov random fields within the probabilistic  
setting mainly the Bayesian framework \cite{GG}. It also appeared as the weak membrane energy  and the corresponding graduated non-convexity algorithm  developed in \cite{BZ}. The nonsmooth and nonconvex truncated quadratic regularization without gradient operator was also found in robust statistic where it can kill the outliers completely \cite{HRRS, GW}; see Figure \ref{fig:comparison:abs:TQ} for the absolute value function and the truncated quadratic function. The discrete truncated quadratic regularization can also be seen as the discrete version of the continuous variational Mumford-Shah functional \cite{CH, MS1, MS2, GW}. We refer to \cite{WLW} for the general framework of truncated regularization which covered the truncated quadratic problem. Due to so many important applications in imaging and other fields, there are already a lot of studies on algorithmic developments for this problems \cite{NN, GY}.  Generally, there are two categories of algorithms. One is the stochastic approximation approach including the simulated annealing and the  other is the deterministic approach.  There are many kinds of deterministic optimization algorithms including the graph-cut algorithm \cite{BVZ} and the graduated non-convexity algorithm (GNC) \cite{BZ}; see \cite{MN,CLMS} for its recent development. Fast algorithms are also developed in \cite{AA,AIG, CBAB1, CBAB2} which benefit from the alternating minimization technique by introducing some auxiliary variables \cite{GY,MD}.  

Inspired by the recent developments of the difference of convex algorithms (DCA) \cite{HAD, HAD1, HAD2, YU} and the powerful  Kurdyka-\L ojasiewicz (KL) analysis for nonconvex optimizations  \cite{AB, ABRC,ABS,LP, WCP} together with the preconditioned techniques in convex splitting algorithms \cite{BS1,BS2, BS3}, we tackle this problem by the proposed preconditioned DCA algorithm with extrapolation. {DCA is now widely used for analyzing and computing noncovex models in image and signal processing. For example, a weighted difference of anisotropc and isotropic TV model is proposed in \cite{LZOX} for better reconstruction  and a more delicate  $l_1$-$\alpha l_2$ model is further developed in \cite{LCGN}.} For \eqref{equation:iso} or \eqref{equation:ani}, we will employ the following difference of convex functions (DC) throughout this paper, $P^l(\bds{x}) = P_1^l(\bds{x})- P_2^l(\bds{x})$ with $l=I$ or $l=A$ and 
\begin{align}
&P_1^I(\bds{x}) = \sum_{i=1}^m \sum_{j=1}^n\frac{\mu}{2}(|(\nabla \bds{x})_{i,j}|^2 +  \frac{\lambda}{\mu }), \ P_2^I(\bds{x}) =\sum_{i=1}^m \sum_{j=1}^n\frac{\mu}{2} \max {(|(\nabla \bds{x})_{i,j}|^2, \frac{\lambda}{\mu })}, \label{eq:dc:convex} \\
&P_1^A(\bds{x})= \sum_{i=1}^m \sum_{j=1}^n\frac{\mu}{2}\sum_{l=1}^2(|(\nabla_l \bds{x})_{i,j}|^2 +  \frac{\lambda}{\mu }) , \ P_2^A(\bds{x}) =\sum_{i=1}^m \sum_{j=1}^n\frac{\mu}{2} \sum_{l=1}^2 \max {(|(\nabla_l \bds{x})_{i,j}|^2, \frac{\lambda}{\mu })}. \notag
\end{align}
Note that both $f(\bds{x})$, $P_1^I(\bds{x})$ and $P_2^I(\bds{x})$ (or $P_1^A(\bds{x})$ and $P_2^A(\bds{x})$) are convex functions. $P_1^I$ (or $P_1^A$) is continuous differentiable with locally Lipschitz gradient and $P_2^I$ (or $P_2^A$) is proper closed function. Our motivation mainly comes from the challenging problem for solving the linear subproblems appeared in DCA, which is the most expensive step for DCA in a lot of applications \cite{HAD}. For example, splitting decomposition algorithm with error control is employed in \cite{HAD}.  We proposed a preconditioned framework and cooperated the preconditioned iteration for linear systems into the total nonlinear DCA iterations. In this framework, only one or few preconditioned steps are needed for the linear subproblems without solving it inexactly or exactly. Especially, the global convergence and the local linear convergent rate of DCA can also be obtained. Usually, the computational amount of one time or few times preconditioned iterations is quite less. For example, the computation effort of one Jacobi or one symmetric Gauss-Seidel iteration for large scale linear system is nearly negligible compared to solving the linear sytem even with moderate accuracy, especially for large scale linear system.

Our contributions belong to the following parts. First, we propose a preconditioned DCA for the truncated quadratic regularization with gradient operator including both the isotropic and anisotropic cases. With the classical preconditioning technique, we can deal with the large linear system efficiently for the nonlinear DCA algorithm with any finite time preconditioned iterations. No error control is needed for solving large linear systems while the convergence can be guaranteed. {For example, in the proposed preconditioned framework, one can still obtain global convergence of the DCA  by employing 10 specially designed symmetric red-black Gauss-Seidel iterations for the linear subproblem during each DCA iteration}. Second, with detailed analysis of the Kurdyka-\L ojasiewicz exponent of the minimization functional, together with the global convergence of the iterative sequence, we also prove the local linear convergence rate of the proposed preconditioned DCA. {Third, our global convergence and local convergence rate analysis is based on the difference of convex structure \eqref{eq:dc:convex} where $P_1$ ($P_1^I$ or $P_1^A$) has locally Lipschitz gradient and $P_2$ ($P_2^I$ or $P_2^I$) is closed and convex. This is different from the case in \cite{WCP} where $P_1$ is closed and convex and $P_2$ has locally Lipschitz gradient}.  Fourth, we also explore the feature of the truncated quadratic regularization for image segmentation within the proposed preconditioned DCA framework, which was already studied by a lot of algorithms including the graduated non-convexity algorithm \cite{BZ}, the graph-cut based discrete optimization method \cite{BVZ}, and the primal-dual first-order method \cite{SC}. 
Besides the image segmentation, it is known that the truncated quadratic regularization can also be used for image denoising. However, there is no systematic comparisons with the total variation regularization. We give some comparisons between the truncated quadratic regularization  and the total variation for image denoising with detailed parameters.

The rest of the paper is organized as follows. In section \ref{sec:theory}, after some preparations and the calculation of the Kurdyka-\L ojasiewicz exponent, we give the global convergence and present the local linear convergence rate of the proposed preconditioned and extrapolated DCA. 
In section \ref{sec:num}, we give a systematic  numerical study on the image denoising and image segmentation. Finally, we give some discussions on section \ref{sec:conclude}.
\begin{figure}
	\begin{center}
		\subfloat[$|x|$]
		{\includegraphics[width=0.35\textwidth]{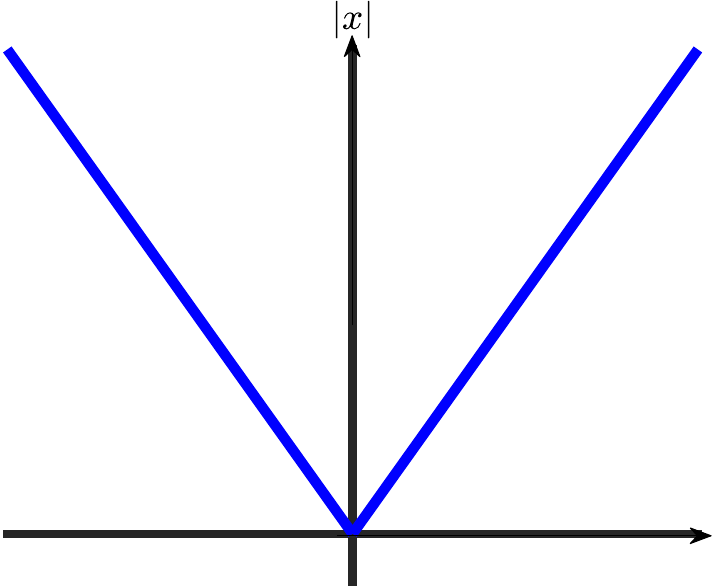}}\qquad
		\subfloat[ $\frac{1}{2}\min(|x|^2, \frac{\lambda}{\mu})$]
			{\includegraphics[width=0.35\textwidth]{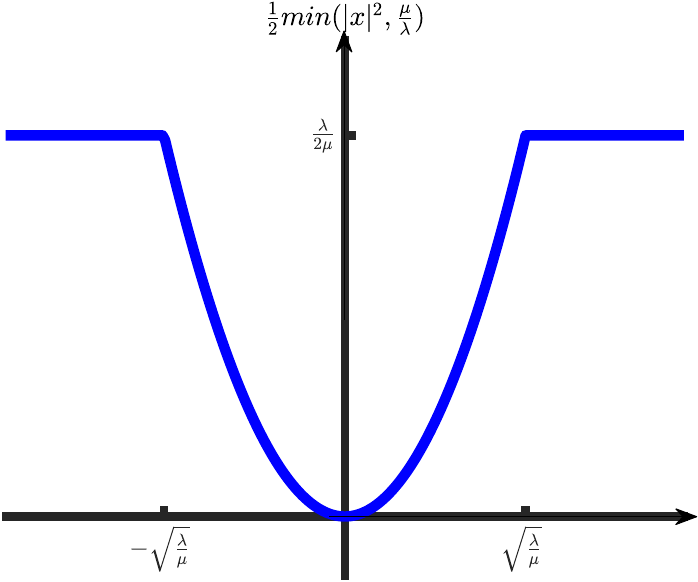}} 
	\end{center}
	\caption{\scriptsize{Absolute value function and truncated quadratic function in $\mathbb{R}$.} }
  \label{fig:comparison:abs:TQ}
\end{figure}
\section{Preconditioned DCA$_e$: convergence and preconditioners}\label{sec:theory}
\subsection{Preliminaries  and KL exponent analysis}

	Let $h: \mathbb{R}^n \rightarrow \mathbb{R}\cup \{+\infty\}$ be a proper lower semicontinuous function. Denote $\dom h: = \{ x \in \mathbb{R}^n : \ h(x) < +\infty  \}$.
  For each $x \in \dom h$, the limiting-subdifferential of $h$ at $x\in \mathbb{R}^n $, written $\partial h$, is defined as follows \cite{BM,Roc1}, 
      \[
      \partial h(x): = \left\{ \xi \in \mathbb{R}^n : \exists x_n \rightarrow x,   h(x_n) \rightarrow h(x),   \xi_n  \rightarrow \xi, \lim_{y\rightarrow x } \inf_{y \neq x_n}\frac{h(y)-h(x_n) - \langle \xi_n, y-x_n \rangle }{|y-x_n|}\geq 0 \right\}.
      \] 

It is known that the above subdifferential $\partial h$  reduces to the classical subdifferential in convex analysis when $h$ is convex. It can be seen that a necessary condition for $x \in \mathbb{R}^n$ to be a minimizer of $h$ is
$0 \in \partial h(x)$ \cite{AB}. 
For the global and local convergence analysis, we also need the Kurdyka-\L ojasiewicz (KL) property and KL exponent. 
\begin{definition}[KL property and KL exponent] \label{def:KL}
	A proper closed function $h$ is said to satisfy the KL property at $\bar x \in \dom \partial h$ if there exists $a \in (0, +\infty]$, a neighborhood $\mathcal{O}$ of $\bar x$, and a continuous concave function $\psi: [0, a) \rightarrow (0, +\infty)$ with $\psi(0)=0$ such that:
	\begin{itemize}
		 \item [{(i)}] $\psi$ is continuous differentiable on $(0,a)$ with $\psi'>0$.
		 \item [{(ii)}] For any $x \in \mathcal{O}$ with $h(\bar x) < h(x) <h(\bar x) + a$, one has
		 \begin{equation}\label{eq:kl:def}
		 \psi'(h(x)-h(\bar x)) \dist(0, \partial h(x)) \geq 1.
		 \end{equation}
	\end{itemize}
A proper closed function $h$ satisfying the KL property at all points in $\dom \partial h$ is called a KL function. If $\psi$ in \eqref{eq:kl:def} can be chosen as $\psi(s) = cs^{1-\theta}$ for some $\theta \in [0,1)$ and $c>0$, we say that $h$ satisfies KL properties at $\bar x$ with exponent $\theta$. This means that for some $\bar c >0$, we have
\begin{equation}\label{eq:KL:exponent:theta:exam}
\dist(0, \partial h(x)) \geq \bar c (h(x)-h(\bar x))^{\theta}.
\end{equation}
If $h$ satisfies KL property with exponent $\theta \in [0,1)$ at all the points of $\dom \partial h$, we call $h$ is a KL function with exponent $\theta$. 
\end{definition}
The following \emph{uniformized KL property} proved in \cite{BST} is also important for our discussions. 
\begin{lemma}\label{lem:uni:KL}
Assume $h$ is a proper closed function and $\Gamma $ is a compact set. If $h$ is a constant on $\Gamma$ and satisfies the KL property at each point of $\Gamma$, then there exist $\epsilon, a>0$ for any $\psi$ as in definition \ref{def:KL}, 
\begin{equation}\label{eq:uniform:KL}
		 \psi'(h(x)-h(\hat x)) \dist(0, \partial h(x)) \geq 1,
\end{equation} 
for any $\hat x\in \Gamma$ and any $x$ satisfying $\dist(x, \Gamma)< \epsilon$ and $h(\hat x)<h(x)<h(x)+a$.
\end{lemma}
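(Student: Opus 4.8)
The plan is to deduce the uniform inequality from the pointwise KL property by a finite-covering argument, the key simplification being that $h$ takes a single value on $\Gamma$. So I would first fix $\zeta := h(\hat x)$, which is independent of $\hat x \in \Gamma$, and for each $\bar x \in \Gamma$ invoke the KL property from Definition \ref{def:KL}: there are $a_{\bar x} > 0$, an open neighbourhood $\mathcal{O}_{\bar x}$ of $\bar x$, and a desingularizing function $\psi_{\bar x}$ as in Definition \ref{def:KL} with $\psi_{\bar x}'(h(x)-\zeta)\,\dist(0,\partial h(x)) \ge 1$ whenever $x \in \mathcal{O}_{\bar x}$ and $\zeta < h(x) < \zeta + a_{\bar x}$. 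Since $\Gamma$ is compact and $\{\mathcal{O}_{\bar x}\}_{\bar x\in\Gamma}$ is an open cover, I extract a finite subcover $\Gamma \subseteq \bigcup_{i=1}^p \mathcal{O}_{\bar x_i} =: \mathcal{U}$ and set $a := \min_{1\le i\le p} a_{\bar x_i} > 0$.

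Next I would manufacture a uniform radius $\epsilon > 0$ with $\{x : \dist(x,\Gamma) < \epsilon\} \subseteq \mathcal{U}$. This is the one step that needs a genuine argument: if no such $\epsilon$ existed, there would be points $x_n$ with $\dist(x_n,\Gamma) < 1/n$ and $x_n \notin \mathcal{U}$; picking $y_n \in \Gamma$ with $|x_n - y_n| \to 0$ and using compactness of $\Gamma$ to pass to a subsequence $y_n \to y \in \Gamma \subseteq \mathcal{U}$, one gets $x_n \to y$ while $\mathcal{U}$ is open, contradicting $x_n \notin \mathcal{U}$ for large $n$.

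Finally I would glue the finitely many desingularizing functions into one. Put $\psi := \sum_{i=1}^p \psi_{\bar x_i}$, restricted to $[0,a)$; it is continuous and concave with $\psi(0) = 0$, continuously differentiable on $(0,a)$, and $\psi' = \sum_i \psi_{\bar x_i}' > 0$, so it meets all requirements of Definition \ref{def:KL}. Given any $\hat x \in \Gamma$ and any $x$ with $\dist(x,\Gamma) < \epsilon$ and $\zeta < h(x) < \zeta + a$, one has $x \in \mathcal{O}_{\bar x_i}$ for some $i$ and $\zeta < h(x) < \zeta + a \le \zeta + a_{\bar x_i}$, hence $\psi_{\bar x_i}'(h(x)-\zeta)\,\dist(0,\partial h(x)) \ge 1$; since $\psi'(h(x)-\zeta) \ge \psi_{\bar x_i}'(h(x)-\zeta)$ and $\dist(0,\partial h(x)) \ge 0$, it follows that $\psi'(h(x)-h(\hat x))\,\dist(0,\partial h(x)) \ge 1$, which is exactly \eqref{eq:uniform:KL}.

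I expect the main obstacle to be the second step — upgrading ``finitely many neighbourhoods cover $\Gamma$'' to ``a uniform tube around $\Gamma$ lies in their union'' — though it is routine once set up; the only other point requiring care is checking that the summed function $\psi$ still belongs to the admissible class, which is immediate since concavity, continuity, differentiability, and positivity of the derivative are all preserved under finite sums. Everything else is bookkeeping, made possible precisely because $h$ is constant on $\Gamma$, so that $h(\hat x)$ can be replaced throughout by the single constant $\zeta$.
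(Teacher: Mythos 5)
Your proof is correct, and it is precisely the standard finite-covering argument (extract a finite subcover of $\Gamma$, take $a$ as the minimum of the $a_{\bar x_i}$, build a uniform tube by a compactness/contradiction argument, and sum the finitely many desingularizing functions) used to prove this uniformization lemma in the cited reference \cite{BST}; the paper itself states the lemma without proof, deferring to that reference. No gaps: the tube argument and the verification that $\psi=\sum_i\psi_{\bar x_i}$ remains admissible are exactly the two points that need checking, and you handle both.
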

  The minimization problem \eqref{equation:iso} or \eqref{equation:ani} is a standard DC programming and can be solved by DCA. {From now on, we will denote $x$ as the vectorized $\bds{x}$.  We will still use the same notations $A$, $\Delta$, $A^*$ and $\nabla$ (or $F$, $f$ and $P$) as the matrix version of the linear mappings (the functions) after vectorization}.  Let's  take the problem \eqref{equation:iso}  for example. The standard DCA iteration reads as follows,
  \begin{equation}\label{eq:iso:dca:exam}
  x^{t+1}: = \argmin_{x} f(x) + P_{1}^I(x) - \langle \xi^t, x \rangle, \quad  \xi^t \in \partial P_2^{I}(x)|_{x=x^t},
  \end{equation}
  where $P_1^I$, $f$ and $P_1^I$ are the same functions in \eqref{eq:dc:convex} and the term $\langle \xi^t, x \rangle$ essentially represents the linearization of the convex function $P_{1}^I(x)$ through its subgradient. It can be seen by replacing $\langle \xi^t, x \rangle$ by $\langle \xi^t, x -x^t \rangle + P_2^I(x^t)$ in \eqref{eq:iso:dca:exam} without changing the minimization problem \eqref{eq:iso:dca:exam}. By direct calculation, the minimizer $x^{k+1}$ of \eqref{eq:iso:dca:exam} can be obtained by solving the following linear equation during each DCA iteration
  \begin{equation}\label{eq:linear:DCA:ori}
  (A^*A-\mu \Delta)x = \xi^t + A^*x_0.
  \end{equation}
  It is very expensive and challenging to solve this kind of equation especially for large linear systems during each iteration even with error control. In \cite{HAD}, ``preconditioned decomposition algorithm" is employed to solve the equation with error control while $A$ is the identity operator in \eqref{eq:linear:DCA:ori}. Inspired by the preconditioned framework for the convex splitting algorithm \cite{BS1, BS2, BS3}, our motivation is to introduce  the powerful and classical preconditioning technique for linear systems such as \eqref{eq:linear:DCA:ori} and cooperate them into the nonlinear DCA.

   We introduce the preconditioned iterations for \eqref{eq:linear:DCA:ori} through  proximal terms with special metric (or weight).  Let's first introduce the inner product and norm induced by  the positive definite and self-adjoint operator (metric) $M$,
  \[
  \langle x, y \rangle_M: = \langle x,My \rangle, \quad \|x\|_M^2: = \langle x,Mx\rangle.   
  \]
Moreover, we will also employ the extrapolation framework that can bring out certain acceleration \cite{WCP} for a lot of cases. The extrapolation strategy is originated from  Nesterov's accelerated gradient method. To this end, let's introduce the extrapolation parameter $\beta$ such that  $\{\beta_{t}\}  \subseteq [0,1)$ and $\sup _{t} \beta_{t}<1$. The extrapolation step is done by $y^{t}= x^{t}+\beta_{t}(x^{t}-x^{t-1})$ where the previous iteration $x^{t-1}$ is incorporated.  With these preparations, we now give our algorithmic framework, i.e., the Algorithm \ref{alg:pre:dca}. Henceforth, we will consider the proposed Algorithm \ref{alg:pre:dca} with efficient preconditioners for solving the problem. 


\begin{algorithm}
	\caption{Preconditioned difference-of-convex algorithm with extrapolation (preDCA$_{e}$) for $\argmin_x F(x) = f(x)+ P_1(x) - P_2(x)$\label{alg:pre:dca}}
	\begin{algorithmic}
		\STATE { $x^{0} \in \operatorname{dom} P_{1}$, $\{\beta_{t}\}  \subseteq [0,1)$, \text { with } $\sup _{t} \beta_{t}<1$. Set  $x^{-1}=x^{0}$. \\
			Iterate the following steps for $t=0, 1, \cdots$,
		\begin{align}
		\xi^t &\in \partial P_2(x^t),  \\
	  y^{t}&= x^{t}+\beta_{t}\left(x^{t}-x^{t-1}\right) 
		\\	x^{t+1} &=\underset{y}{\arg \min }\left\{\left\langle\nabla f(y^{t})-\xi^{t}, y\right\rangle+\frac{1}{2}\|y-y^{t}\|_{M}^{2}+P_{1}(y)\right\}. \label{eq:proxi:y}
		\end{align}}
		\STATE{Unless some stopping criterion is satisfied, stop}
	\end{algorithmic}
\end{algorithm} 
Supposing the Lipschitz constant of $f$ in Algorithm \ref{alg:pre:dca} is $L$, {if choosing $M=L\bds{I}$ with $\bds{I}$ denoting the identity operator (or the identity matrix when vectoring $\bds{x}$)}, Algorithm \ref{alg:pre:dca} reduces to the proximal extrapolation DCA proposed in \cite{WCP} with different conditions on $P_1$ and $P_2$.  We employ the metric induced by $M$, which can bring out great flexibility to deal with the linear system with efficient preconditioners.   Let's take  the following Lemma  \ref{lem:feaible:percon}  for example to illustrate our motivation, {where we can reformulate \eqref{eq:proxi:y} as the classical preconditioned iteration \cite{SA}. }
\begin{lemma}\label{lem:feaible:percon}
	With appropriately chosen linear operator $M\geq L_0\bds{I}$ with positive constant $L_0\geq L$, the iteration \eqref{eq:proxi:y} actually can be reformulated as the following classical preconditioned iteration
	\begin{equation}\label{eq:pre:iter}
	x^{t+1}: = y^t + M_p^{-1}[b^t -Ty^t],
	\end{equation}
	where 
	\[
	b^t = L_0y^t -  \nabla f(y^{t})+\xi^{t}, \quad  T = L_0\bds{I} -\mu \Delta, \quad M_p = M -\mu\Delta \geq T.
    \]
\end{lemma}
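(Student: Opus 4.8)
The plan is to write down the first-order optimality condition for the strongly convex subproblem \eqref{eq:proxi:y} and then rearrange it algebraically into the fixed-point form \eqref{eq:pre:iter}. The crucial structural observation is that, up to an irrelevant additive constant, both $P_1^I$ and $P_1^A$ coincide with the convex quadratic $u \mapsto \frac{\mu}{2}\abs{\nabla u}^2$ (using $\sum_{l=1}^2 \abs{\nabla_l u}^2 = \abs{\nabla u}^2$ in the anisotropic case), so that $\nabla P_1(u) = \mu\nabla^*\nabla u = -\mu\Delta u$, with $\Delta$ the discrete Laplacian under the convention $\Delta = -\nabla^*\nabla$. In particular the objective in \eqref{eq:proxi:y} is differentiable and, since $M \ge L_0 I$ with $L_0 > 0$, it is strongly convex; hence its unique minimizer $x^{t+1}$ is characterized by
\[
0 = \nabla f(y^t) - \xi^t + M(x^{t+1} - y^t) - \mu\Delta x^{t+1}.
\]

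Next I would isolate the increment $x^{t+1} - y^t$. Writing $-\mu\Delta x^{t+1} = -\mu\Delta(x^{t+1}-y^t) - \mu\Delta y^t$ and collecting the terms multiplying $x^{t+1}-y^t$ turns the stationarity condition into
\[
(M - \mu\Delta)(x^{t+1} - y^t) = \xi^t - \nabla f(y^t) + \mu\Delta y^t.
\]
The left-hand operator is precisely $M_p$. For the right-hand side, adding and subtracting $L_0 y^t$ gives $\xi^t - \nabla f(y^t) + \mu\Delta y^t = \big(L_0 y^t - \nabla f(y^t) + \xi^t\big) - (L_0 I - \mu\Delta)y^t = b^t - Ty^t$, so $M_p(x^{t+1}-y^t) = b^t - Ty^t$, which is exactly \eqref{eq:pre:iter} as soon as $M_p$ is invertible.

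Finally I would verify the operator relations. Since $\mu > 0$ and the discrete Laplacian satisfies $\Delta \le 0$, we have $-\mu\Delta \ge 0$; combined with $M \ge L_0 I$ this yields $M_p = M - \mu\Delta \ge L_0 I$, so $M_p$ is self-adjoint and positive definite, hence invertible, which legitimizes solving for $x^{t+1}$. The same computation gives $M_p - T = M - L_0 I \ge 0$, i.e.\ $M_p \ge T$, and likewise $T = L_0 I - \mu\Delta \ge L_0 I$ is positive definite. I do not expect a genuine obstacle here: the lemma is essentially a change of variables on the optimality system, and the only points that require care are fixing the sign and normalization of the discrete Laplacian so that the quadratic term of $P_1$ produces exactly $-\mu\Delta$, discarding the harmless additive constants $\tfrac{\lambda}{2}$ (resp.\ $\lambda$) in $P_1^I$ (resp.\ $P_1^A$), and extracting the positive definiteness of $M_p$ from the hypothesis $M \ge L_0 I \ge LI$.
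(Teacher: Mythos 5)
Your proposal is correct and follows essentially the same route as the paper: both write down the first-order optimality condition of the strongly convex subproblem \eqref{eq:proxi:y}, use that $\nabla P_1(u)=-\mu\Delta u$ up to an additive constant, and then add and subtract $L_0 y^t$ to recast the solve with $M_p = M-\mu\Delta$ as the preconditioned update $x^{t+1}=y^t+M_p^{-1}[b^t-Ty^t]$. The only difference is cosmetic — you isolate the increment $x^{t+1}-y^t$ directly and spell out the positive definiteness of $M_p$ (which the paper leaves implicit) — so no further changes are needed.
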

\begin{proof}
	Denote $b_1^t  = \xi^t -\nabla f(y^t) $. By the structure of $P_1^A$ or $P_1^I$ in \eqref{eq:dc:convex}, we see
	\[
	M(y-y^t) -\mu \Delta y -b_1^t=0.
	\]
	We thus have
	\begin{align}
	x^{t+1} &= (M-\mu \Delta)^{-1}(b_1^t + My^t) \notag  \\  
	       & = (M-\mu \Delta)^{-1}( (M-\mu \Delta) y^t +b_1^t +\mu\Delta y^t ) \notag \\
	       & = y^t + (M-\mu \Delta)^{-1}[b_1^t +L_0y^t - (L_0 \bds{I}- \mu \Delta )y^t], \notag  \\ 
          &=y^t + (M-\mu \Delta)^{-1}[b^t  - (L_0\bds{I} - \mu \Delta )y^t],	       
	       	\end{align}
	which leads to \eqref{eq:pre:iter} with notation $M_p: = M  -\mu \Delta$. $M_p$ is actually a preconditioner for $T$ to solve the following linear equation
\begin{equation}\label{eq:orig:line:equa}
Tx = b^t.
\end{equation}
\qed
\end{proof}	
The following remark will give more interpretation of the preconditioned iteration \eqref{eq:pre:iter}.
\begin{remark}\label{rem:pre:gs}
Suppose the discretization of the operator $T=L_0\bds{I}-\mu \Delta$ in Lemma \ref{lem:feaible:percon} is $D-E-E^*$ (still denoting it as $T$ and using $\Delta$ as the discretized $\Delta$) where $D$ is the diagonal part, $-E$ represents the strict lower triangular part and $E^*$ is the transpose of $E$. If  choosing $M_p$ as the symmetric Gauss-Seidel preconditioner for $T$, it is well-known that \cite{SA} (chapter 4.1) (or \cite{BS1})
 \begin{align*}
 M_p = T+ E^*D^{-1}E.
 \end{align*}
 By Lemma \ref{lem:feaible:percon}, since $M_p= (M-\mu\Delta)$, we thus have the explicit form of $M$
 \[
 M =M_p + \mu \Delta = T+ E^*D^{-1}E + \mu \Delta  = T+ E^*D^{-1}E + L_0\bds{I} -(L_0\bds{I} - \mu \Delta)  = E^*D^{-1}E + L_0\bds{I}.
 \]
We also see $M \geq L_0 \bds{I}$ as in Lemma \ref{lem:feaible:percon}.  However, we do not need to calculate the explicit form of $M$ or $M_p^{-1}$, since the update \eqref{eq:pre:iter}  is exactly the one time symmetric Gauss-Seidel iteration for the linear equation $Ty = b^t$ \cite{SA}. {This means that  $x^{t+1}$ as in \eqref{eq:pre:iter} is also equivalent to \eqref{eq:proxi:y} through one time symmetric Gauss-Seidel iteration. } 
\end{remark}

For image denosing problem, with $f(x) = {\|x-x_0\|_2^2}/{2}$ with Lipschitz constant $1$, if we choose $L_0 = \bds{I}$ in Lemma \ref{lem:feaible:percon}, the linear equation \eqref{eq:orig:line:equa} coincides with the original linear equation of DCA \eqref{eq:linear:DCA:ori}.  For image deblurring problem, one possible choice is that we can still use algorithm \ref{alg:pre:dca} with $f(x) = {\|Ax-x_0\|_2^2}/{2}$, where the symmetric Gauss-Seidel preconditoners can still be employed for the corresponding perturbed Laplacian  equation with using $\nabla f(y^t)$ explicitly in \eqref{eq:proxi:y}.  Here, we provide another choice. Taking the \eqref{equation:ani} for example, letting
	\begin{equation}\label{data:deblur}
	f=0, \quad  P_1(y) = {\|Ay-x_0\|_2^2}/{2} + P_1^A(y),  \quad P_2(y) = P_2^A(y),
	\end{equation} 
 we have the following proposition, whose proof is completely similar to Lemma \ref{lem:feaible:percon} and is thus omitted.
	\begin{proposition}\label{pro:feaible:percon:deblur}
			With appropriately chosen linear operator $M\geq L_0\bds{I}$ with positive constant $L_0\geq L$ and the data in \eqref{data:deblur}, the iteration \eqref{eq:proxi:y} in Algorithm \ref{alg:pre:dca} can be reformulated as the following classical preconditioned iteration
			\begin{equation}\label{eq:pre:iter:blur}
			x^{t+1}: = y^t + M_p^{-1}[b^t -Ty^t],
			\end{equation}
			where 
			\[
			b^t = L_0y^t  + A^*x_0 + \xi^{t}, \quad  T = L_0\bds{I} +A^*A -\mu \Delta, \quad M_p = M +A^*A -\mu\Delta \geq T.
			\]
    \end{proposition} 
	The condition $M\geq L_0\bds{I}$ comes from the positive definite requirement of $M$, which is important for the following convergence analysis. However, we can choose very small $L_0$ for the deblurring problem and $T$ can thus approximate the original linear system \eqref{eq:linear:DCA:ori}. Throughout this paper, if $f=0$ in \eqref{data:deblur} with Lipschitz constant $L=0$, we further assume $M\geq L_0 \bds{I}$ with constant $L_0>0$.

With Lemma \ref{lem:feaible:percon}, Remark \ref{rem:pre:gs}, and Proposition \ref{pro:feaible:percon:deblur}, it can be seen that one can cooperate the classical preconditioned iteration into the DCA framework through the proximal mapping with metric. We thus can deal with linear systems with powerful tools from the classical preconditioning techniques for linear algebraic equations. Now let's turn to the KL analysis for the convergence with our preconditioning framework. We begin with the KL exponent of the quadratic functions with an elementary proof.

\begin{lemma}\label{lemma:quadratic}
	The quadratic function ${q}(x)=\frac{1}{2}x^TQx-u^Tx+s$ is a KL function with KL exponent of $\frac{1}{2}$, where Q is a symmetric positive semidefinite matrix. Moreover, supposing that the minimal positive eigenvalue of $M$ is $\lambda_M$, then there exist small positive  $\varepsilon$ and $\eta$, such that for any $x$  satisfying $|x-\bar x|\leq \varepsilon$ and ${q}(\bar{x})< {q}(x)<{q}(\bar{x})+\eta$, we have
	\[
{q}(x)-{q}(\bar x)=	|{q}(x)-{q}(\bar x)| \leq \frac{1}{2\lambda_M} |\nabla {q}(x)|^2.
	\]
\end{lemma}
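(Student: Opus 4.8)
The plan is to reduce everything to the spectral decomposition of $Q$. Since $Q$ is symmetric positive semidefinite, $f$ is a smooth convex function, so its limiting subdifferential is the singleton $\partial f(x)=\{\nabla f(x)\}$ with $\nabla f(x)=Qx-u$; hence $\dist(0,\partial f(x))=|Qx-u|$ and the critical points of $f$ coincide with its global minimizers, i.e.\ with the (possibly empty) affine set $S=\{x:Qx=u\}$. To establish that $f$ is a KL function with exponent $\tfrac12$ I would verify \eqref{eq:KL:exponent:theta:exam} at an arbitrary $\bar x\in\dom\partial f=\mathbb{R}^n$, splitting into the cases $\bar x\notin S$ and $\bar x\in S$.

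If $\bar x\notin S$ (which also covers $S=\emptyset$ entirely), then $\nabla f(\bar x)\neq 0$, so by continuity $|Qx-u|\ge\tfrac12|Q\bar x-u|=:\delta>0$ on a small ball around $\bar x$; since $f(x)-f(\bar x)\to 0$ as $x\to\bar x$, shrinking the ball makes $\bar c\,(f(x)-f(\bar x))^{1/2}\le\delta$ for any prescribed $\bar c$, so \eqref{eq:KL:exponent:theta:exam} holds there trivially with exponent $\tfrac12$.

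If $\bar x\in S$, then $u=Q\bar x$, and expanding the quadratic yields the key identities $f(x)-f(\bar x)=\tfrac12(x-\bar x)^{T}Q(x-\bar x)$ and $\nabla f(x)=Q(x-\bar x)$. Using the spectral decomposition $Q=\sum_i\lambda_i v_iv_i^{T}$ with orthonormal eigenvectors, write $x-\bar x=p+q$ with $p\in\operatorname{range}Q$ and $q\in\ker Q$; then $Q(x-\bar x)=Qp$ and $(x-\bar x)^{T}Q(x-\bar x)=p^{T}Qp$, and the elementary operator inequality $Q^{2}\succeq\lambda_M Q$ on $\operatorname{range}Q$ (with $\lambda_M$ the smallest positive eigenvalue of $Q$) gives
\[
|\nabla f(x)|^{2}=|Qp|^{2}=p^{T}Q^{2}p\ \ge\ \lambda_M\,p^{T}Qp\ =\ 2\lambda_M\bigl(f(x)-f(\bar x)\bigr).
\]
This is precisely the asserted bound $f(x)-f(\bar x)\le\tfrac{1}{2\lambda_M}|\nabla f(x)|^{2}$ — which in fact holds globally, so the neighborhood and threshold $\varepsilon,\eta$ in the statement only serve to match the format of the KL inequality — and, rearranged, it reads $\dist(0,\partial f(x))\ge\sqrt{2\lambda_M}\,(f(x)-f(\bar x))^{1/2}$, i.e.\ the KL property at $\bar x$ with exponent $\tfrac12$. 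Combining the two cases proves the claim.

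I do not expect a serious obstacle here: the only points needing care are the orthogonal splitting along $\ker Q$, so that the null space of $Q$ does not spoil the estimate, and the inequality $Q^{2}\succeq\lambda_M Q$ restricted to $\operatorname{range}Q$, both of which follow at once from the spectral decomposition. (One should also note that $\lambda_M$ in the statement is most naturally read as the smallest positive eigenvalue of the Hessian $Q$ of $f$; in the intended application $Q\succeq\lambda_M I$ with $\lambda_M$ the smallest eigenvalue of the metric $M$, so the stated constant is then valid a fortiori.)
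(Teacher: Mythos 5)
Your proof is correct and follows essentially the same route as the paper's: a case split between critical and non-critical $\bar x$, with the spectral decomposition of $Q$ giving $Q \preceq Q^2/\lambda_M$ (hence the global bound) in the critical case, and a smallness-of-neighborhood argument in the non-critical case, where the paper computes an explicit admissible radius $\varDelta_0$ in place of your soft continuity argument. Your remark that $\lambda_M$ must be read as the smallest positive eigenvalue of $Q$ rather than of $M$ is also consistent with what the paper's own proof actually uses.
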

\begin{proof}
	First, noting that $\frac{1}{2}x^TQx-u^Tx+s$ and $\frac{1}{2}x^TQx-u^Tx$ have the same KL exponent, we just need to prove the case of the function ${q}(x)=\frac{1}{2}x^TQx-u^Tx$ without loss of generality. We first consider the case $\bar x$ such that $\nabla {q}(x)|_{x=\bar x}=0$, i.e., $Q\bar x=u$. Supposing $\lambda_1 \geq \lambda_2 \geq \cdots \geq \lambda_n \geq0$ are the eigenvalues of $Q$, we know $ \lambda_M= \min\{\lambda_i, \lambda_i  >0\}$ by assumption.  There exists an orthogonal matrix $P$ such that $Q = P^{-1}\text{Diag}[\lambda_1, \cdots, \lambda_n]P$. Furthermore, 
	\begin{equation*}	
	\begin{aligned}
	&|{q}(x)-{q}(\bar{x})|=|\frac{1}{2} \langle Q(x-\bar x), x-\bar x \rangle | \\
	&= \frac{1}{2}(x-\bar x)^TP^{-1}
	\begin{pmatrix}
	\lambda_1 \\
	& \ddots \\
	& & \lambda_n
	\end{pmatrix}
	P(x-\bar x) \le \frac{1}{2\lambda_M}(x-\bar x)^TP^{-1}
	\begin{pmatrix}
	\lambda^2_1\\
	& \ddots \\
	& & \lambda_n^2
	\end{pmatrix}
	P(x-\bar x)\\
	&= \frac{1}{2\lambda_M} \langle Q(x-\bar x),Q(x-\bar x)\rangle =  \frac{1}{2\lambda_M} \langle Qx-u,Qx-u\rangle = \frac{1}{2\lambda_M} | \nabla {q}(x)|^2.
	\end{aligned}
	\end{equation*}
Now, let's turn to the case $|\nabla {q}(\bar x)| =| Q\bar x- u| =\delta_0 > 0$. 
Supposing $|x-\bar x|\leq \varepsilon$, we see
	\begin{equation}\label{KL:fne0}
	\begin{aligned}
	|{q}(x)-{q}(\bar{x})|&=|\frac{1}{2}x^TQx-u^Tx-\frac{1}{2}\bar{x}^TQ\bar{x}+u^T\bar x|\\
	&=|\langle\frac{1}{2}(x-\bar{x})^TQ(x-\bar{x})+ \langle Q\bar{x}-u, x-\bar x\rangle| \\
	& \leq \frac{1}{2}\|Q\|\varepsilon^2 + \delta_0 \varepsilon.
	\end{aligned}
	\end{equation}
	For $|\nabla {q}(x)|^2$, we have
		\begin{equation}
	\begin{aligned}
	&|\nabla {q}(x)|^2=|Qx-u|^2 = |Qx - Q\bar x + Q \bar x -u|^2 \\
	&= |Qx - Q\bar x|^2 + 2 \langle Q(x -\bar x), Q \bar x -u \rangle +  |Q \bar x -u |^2 
	\geq \delta_0^2 -\|Q\|^2 \varepsilon^2 -  2  \delta_0 \|Q\| \varepsilon.
	\end{aligned}
	\end{equation}
	To obtain $|{q}(x)-{q}(\bar{x})| \le \frac{1}{2\lambda_M} | \nabla f(x)|^2$,  one can choose 
	\[
	(\delta_0^2 -\|Q\|^2 \varepsilon^2 -  2  \delta_0 \|Q\| \varepsilon) \frac{1}{2\lambda_M} \geq \frac{\delta_0^2}{2} \frac{1}{2\lambda_M} \geq
	\frac{1}{2}\|Q\|\varepsilon^2 + \delta_0 \varepsilon,
	\]
	which leads to 
	\[
	\varepsilon \leq \varDelta_0:=\min\left(\frac{\delta_0}{\|Q\|}, \frac{\delta_0}{\|Q\|} (\sqrt{\frac{\|Q\|}{2\lambda_M} + 1} -1)\right).
	\]
	We thus have $|{q}(x)-{q}(\bar{x})| \le \frac{1}{2\lambda_M} | \nabla {q}(x)|^2$ for all $|x-\bar x|\leq \varDelta_0$. The proof is complete. 
\qed
\end{proof}
\begin{remark}\label{rem:notnew}
Lemma \ref{lemma:quadratic} can be seen as a special case of Corollary 5.1 of \cite{LP}, which originated from \cite{LL} for the convex quadratic problem. 
\end{remark}	

	We now discuss the KL exponent of the truncated quadratic regularization functional \eqref{equation:iso} and \eqref{equation:ani}. We will employ the recent study on KL analysis of the functions which can be written as minimization of a finite number of KL functions with KL exponent $1/2$; see \cite{LP}.  Let's turn to the following theorem.

\begin{theorem}\label{thm:KL}
		Assuming the linear operators $A:X\rightarrow Y_0=\mathbb{R}^{m_0\times n_0}$, $K_l: X \rightarrow Y_l=\mathbb{R}^{m_l\times n_l \times c_l}$, $l=1,\cdots, k$ are linear, bounded operators and $\mu_l$, $\tau_l$ are positive parameters, then the KL exponent of the following general truncated quadratic regularization functional $F(\bds{x})$ is ${1}/{2}$,
		\begin{equation}\label{eq:l:min:real}
		F(\bds{x}) = \frac{\|A\bds{x}-\bds{x}_0\|_2^2}{2 } +  \sum_{l=1}^k\sum_{i=1}^{m_l}\sum_{j=1}^{n_l           }  \frac{\mu_l}{2}\min(|(K_l\bds{x})_{i,j}|^2, \tau_l).
		\end{equation}
\end{theorem}

\begin{proof}
Let's first vectorize $\bds{x}$ and $\bds{x}_0$ as the column vector ${x}\in \mathbb{R}^{mn}$ and  ${x}_0 \in \mathbb{R}^{m_0n_0}$ correspondingly. We will still use $A$, $K_l$, $l=1,\cdots, k$ as the discrete matrix versions of the corresponding linear operators.  The equation \eqref{eq:l:min:real} then becomes
\begin{equation}\label{eq:l:min:real:vector}
F({x}) = \frac{\|A{x}-{x}_0\|_{2}^2}{2 } +  \sum_{l=1}^k\sum_{i=1}^{m_ln_l}  \frac{\mu_l}{2}\min(|(K_l{x})_{i}|^2, \tau_l),
\end{equation}
where $(K_l{x})_{i}$ denote the $i$-th component of  $K_l{x}$. Note the fact that 
\[
\min(a,b)+\min(c,d) = \min(a+c,a+d, b+c, b+d), \quad \forall a, b, c, d \in \mathbb{R}.
\]
Similarly, for the summation with $N:=\sum_{l=1}^{k}m_ln_l$ terms with each term of the form $\min(|(K_l{x})_{i}|^2, \tau_l)$ as in \eqref{eq:l:min:real:vector}, we can rewrite $F(x)$ as follows 
\begin{equation}\label{eq:l:min:real:vector:union}
F({x}) = \frac{\|A{x}-{x}_0\|_{2}^2}{2 } +  \min_{1\leq i \leq 2^N}P_{i}(x).
\end{equation}
$P_{i}(x)$ comes from summing the  selected term  $|(K_l{x})_{i}|^2$ or $\tau_l$ from $\min(|(K_l{x})_{i}|^2, \tau_l)$ for $l=1, \cdots, k$ and $i=1, \cdots, m_ln_l$. For example, we can choose 
\[
P_1(x) = \sum_{l=1}^k\sum_{i=1}^{m_ln_l} \tau_l, \quad P_{2}(x) = \sum_{l=1}^k\sum_{i=1}^{m_ln_l} |(K_l{x})_{i}|^2.
\]
All the other $P_i(x)$ with $i=3, \cdots, 2^N$ can be chosen similarly. Furthermore, it can be readily checked that each $P_{i}(x)$, $i=1, \cdots, 2^N$, is a convex quadratic function. It is straightforward that  \eqref{eq:l:min:real:vector:union} can be written as
\begin{equation}\label{eq:mini:F}
F({x}) = \min_{1\leq i \leq 2^N}F_i(x), \quad F_i(x):= \frac{\|A{x}-{x}_0\|_{2}^2}{2 }+  P_{i}(x), \quad i=1, \cdots, 2^N.
\end{equation}
Actually, we can reformulate each $F_i(x)$ in the form of quadratic function as in Lemma \ref{lemma:quadratic}. Taking the function $F_2(x)$ for example, let
\begin{align*}
&\Lambda = [A/\sqrt{2}, K_{1,1}, \cdots, K_{1,m_1n_1}, \cdots,K_{l,1}\cdots, K_{l,m_ln_l},\cdots, K_{k,1}\cdots, K_{k,m_kn_k}]^T,  \\
&b=[x_0/\sqrt{2}, 0, \cdots, 0]^T \in \mathbb{R}^{N_0}, \quad N_0:=m_0n_0+\sum_{l=1}^km_ln_lc_l,
\end{align*}
where $K_{i_1,i_2}x=(K_{i_1}x)_{i_2}$, $i_1=1, \cdots, k$, and $i_2=1, \cdots, m_{i_1}n_{i_1}$.
We can thus rewrite $F_2(x)$ as follows
\[
P_2(x) =\|\Lambda x -b\|_{2}^2,
\]
which is clearly a quadratic function. Since each $F_i(x)$ is a quadratic function as in Lemma \ref{lemma:quadratic}, then each $F_i(x)$ has KL exponent $1/2$ by Lemma \ref{lemma:quadratic}. 
With \cite{LP} (Theorem 3.1) and noting $F(x)$ is a continuous function, we conclude that   $F({x})$ is a KL function with an exponent $1/2$, since it can be written as minimization of $F_i(x)$ with KL exponent of $1/2$ in \eqref{eq:mini:F}.
\qed
\end{proof}	

\begin{remark}
For the isotropic model \eqref{equation:iso}, we can choose $m_0=m$, $n_0=n$, $K_1=[\nabla_1, \nabla_2
]$ with $m_1=n$, $n_1=n$, $c_1=2$ and $k=1$ as in \eqref{eq:l:min:real}. For the anisotropic model \eqref{equation:ani}, we can choose  $m_0=m$, $n_0=n$, $K_1=\nabla_1$ and $K_2=\nabla_2$ with $m_1=m_2=m$, $n_1=n_2=n$, $c_1=c_2=1$ and $k=2$ as in \eqref{eq:l:min:real}.
\end{remark}

Henceforth, we will make extensive use of the following auxiliary function
\begin{equation}\label{auxiliaryfunction}
	{E(x,y) = f(x) + P(x) + \frac{1}{2}\|x-y\|_M^2 = F(x)+ \frac{1}{2}\|x-y\|_M^2.} 
\end{equation} 
Let's calculate the exponent of KL inequality of the auxiliary function $E(x,y)$ in  \eqref{auxiliaryfunction} at the stationary point.
We do this through the relationship between the original function $F(x)$ and the auxiliary function $E(x,y)$.
\begin{lemma}\label{lemma:relationship}
	If a proper closed function ${F}(x)$ has the KL property at a stationary point $\bar{x}$ with an exponent of $\frac{1}{2}$,  then the auxiliary function $E(x,y)={F}(x) + \frac{1}{2}\|x-y\|_M^2$ has the KL property at the stationary point {$(\bar x,\bar x)$} with the exponent of $\frac{1}{2}$.
\end{lemma}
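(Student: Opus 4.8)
The plan is to verify the defining inequality \eqref{eq:KL:exponent:theta:exam} for $E$ directly: compute $\partial E$, and bound $\dist(0,\partial E(x,y))$ from below by a positive constant times $\bigl(E(x,y)-E(\bar x,\bar y)\bigr)^{1/2}$ on a suitable neighborhood of the stationary point $(\bar x,\bar y)$.

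First I would record the structure of the subdifferential. Since the coupling term $q(x,y):=\tfrac12\|x-y\|_M^2$ is $C^1$ with $\nabla q(x,y)=(M(x-y),-M(x-y))$, the sum rule yields $\partial E(x,y)=\bigl(\partial f(x)+M(x-y)\bigr)\times\{-M(x-y)\}$. Evaluating at the stationary point and using that $M$ is positive definite (hence injective) forces $\bar y=\bar x$ and $0\in\partial f(\bar x)$, so $\bar x$ is a stationary point of $f$ (this is where the hypothesis is used) and $E(\bar x,\bar y)=f(\bar x)$. Consequently, for every $(x,y)$,
\[
\dist\bigl(0,\partial E(x,y)\bigr)^2=\dist\bigl(-M(x-y),\partial f(x)\bigr)^2+|M(x-y)|^2,\qquad E(x,y)-E(\bar x,\bar y)=\bigl(f(x)-f(\bar x)\bigr)+\tfrac12\|x-y\|_M^2 .
\]

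Next I would isolate two elementary lower bounds. From $|\xi|\le|\xi+M(x-y)|+|M(x-y)|$ one gets $\dist(0,\partial E(x,y))^2\ge\tfrac12\dist(0,\partial f(x))^2$; and from $M^2\ge\lambda_M M$, with $\lambda_M>0$ the smallest eigenvalue of the positive definite operator $M$, one gets $|M(x-y)|^2\ge\lambda_M\|x-y\|_M^2$, hence $\dist(0,\partial E(x,y))^2\ge\lambda_M\|x-y\|_M^2$. Then I would fix a neighborhood of $(\bar x,\bar y)$ whose $x$-component lies in the KL neighborhood $\mathcal O$ of $\bar x$, and keep the same threshold $a$ as in the KL property of $f$ at $\bar x$; for $(x,y)$ there with $E(\bar x,\bar y)<E(x,y)<E(\bar x,\bar y)+a$ we automatically have $f(x)=E(x,y)-\tfrac12\|x-y\|_M^2\le E(x,y)<f(\bar x)+a$. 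If $f(x)>f(\bar x)$, the KL inequality of $f$ gives $\dist(0,\partial f(x))^2\ge\bar c^2\bigl(f(x)-f(\bar x)\bigr)$, and averaging the two bounds yields $\dist(0,\partial E(x,y))^2\ge\min\!\bigl(\tfrac{\bar c^2}{4},\lambda_M\bigr)\bigl(E(x,y)-E(\bar x,\bar y)\bigr)$. If instead $f(x)\le f(\bar x)$, then $E(x,y)-E(\bar x,\bar y)\le\tfrac12\|x-y\|_M^2$, and the second bound alone gives $\dist(0,\partial E(x,y))^2\ge2\lambda_M\bigl(E(x,y)-E(\bar x,\bar y)\bigr)$. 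Taking square roots and the smaller of the two constants establishes \eqref{eq:KL:exponent:theta:exam} for $E$ with exponent $1/2$.

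I expect the substantive points to be essentially bookkeeping: choosing the neighborhood of $(\bar x,\bar y)$ and the threshold $a$ for $E$ compatibly with those for $f$ (handled above via $f(x)\le E(x,y)$), and treating the regime $f(x)\le f(\bar x)$, where the KL inequality of $f$ is unavailable and one must fall back entirely on the coercivity of the quadratic coupling $q$. No compactness or uniformized KL property (Lemma \ref{lem:uni:KL}) is needed here, since only the pointwise KL property at $(\bar x,\bar y)$ is claimed.
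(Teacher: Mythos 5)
Your proof is correct and follows essentially the same route as the paper's: both compute $\partial E(x,y)=\bigl(\partial f(x)+M(x-y)\bigr)\times\{-M(x-y)\}$, deduce $\bar y=\bar x$ from stationarity, and bound $\dist(0,\partial E(x,y))^2$ from below by a combination of $\dist(0,\partial f(x))^2$ and $\|M(x-y)\|^2$ to compare with $E(x,y)-E(\bar x,\bar x)=(f(x)-f(\bar x))+\tfrac12\|x-y\|_M^2$ (the paper extracts the two terms via a Young-inequality parameter $\alpha\in(\tfrac12,1)$ where you use the triangle inequality). Your explicit treatment of the regime $f(x)\le f(\bar x)$ is a welcome refinement, since the paper's chain of inequalities tacitly assumes $f(x)\ge f(\bar x)$ when it replaces $|f(x)-f(\bar x)|$ by $c_1\dist^2(0,\partial f(x))$.
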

\begin{proof}
	Because $\bar{x}$ is a stationary point of $F(x)$, we  have $0 \in \partial F(\bar x)$. Supposing $0 \in \partial E(\bar{x},\bar{y}) = ({\partial F(\bar{x})+M(\bar{x}-\bar{y})}, {M(\bar{y}-\bar{x})})^T$, we have $\bar{x} = \bar{y}$ by $M \geq L_0\bds{I}$. Since $F$ has the KL property at $\bar{x}$ with the exponent $\frac{1}{2}$, there exist $c_1$, $\epsilon$ and $\eta >0$ such that 
	\begin{equation} \label{KL:f(x)}
	(F(x)-F(\bar{x}))\le c_1 \dist^2(0,\partial F(x)),
	\end{equation}
	whenever $x\in \dom \partial F(x)$, $\|x- \bar{x}\| \le \epsilon$ and $F(\bar{x})<F(x)<F(\bar{x})+\eta$.
	We thus have 
	\begin{equation}
	\begin{aligned}\label{dist:left}
	|E(x,y) -E(\bar{x},\bar{x})| 
	&\le |F(x)-F(\bar{x})|+\frac{1}{2}\|x-y\|_M^2 \\
	&\le c_1 \dist^2(0,\partial F(x))+\frac{1}{2}\|x-y\|_M^2
	\end{aligned}
	\end{equation}
	for any $(x,y)$ satisfying $x\in \dom \partial F$, $\|x - \bar{x}\|\le \epsilon$, $\|y-\bar{x}\|\le\epsilon$ and $E(\bar{x},\bar{x})<E(x,y)<E(\bar{x},\bar{x})+\eta$.
	Furthermore, if there exists a positive constant $c_2$ such that 
	\begin{equation}
	\begin{aligned}\label{KL:E(x,y)}
	c_1 \dist^2(0,\partial F(x))+\frac{1}{2}\|x-y\|_M^2&\le c_2 \dist^2(0,\partial E(x,y))
	\\ &= c_2 \dist^2((0,0)^T,(\partial F(x)+M(x-y),M(y-x))^T) ,
	\end{aligned}
	\end{equation}
	we get the lemma. For any $\epsilon >0$, we have
	\begin{equation}\label{dist:right}
	\begin{aligned}
	\dist^2(0,\partial E(x,y))&= 2\|M(y-x)\|^2+\inf_{\xi\in\partial F(x)}(\|\xi\|^2+\langle\xi,x-y\rangle_M)\\
	&\ge 2\|M(y-x)\|^2+\inf_{\xi\in\partial F(x)}\left[\|\xi\|^2-(\alpha\|\xi\|^2+\frac{1}{\alpha}\|M(x-y)\|^2)\right]\\
	&=(2-\frac{1}{\alpha})\|M(y-x)\|^2+(1-\alpha)\dist^2(0,\partial F(x)) \\
	& \geq (2-\frac{1}{\alpha}) \ushort \lambda_M\|y-x\|_M^2+(1-\alpha)\dist^2(0,\partial F(x))
	\end{aligned}
	\end{equation}
	where the first inequality follows from the inequality $ab\ge -(\alpha a^2 + \frac{1}{\alpha}b^2)$, $ \forall \alpha>0$ and $\ushort \lambda_M$ is the minimum positive eigenvalue of $M$ as before. Setting $\frac{1}{2}<\alpha<1$,  we have $1-\alpha>0$ and $2-\frac{1}{\alpha}>0$. With \eqref{dist:left} and \eqref{dist:right}, to obtain \eqref{KL:E(x,y)}, one can fix $c_2$ as follows
	\begin{equation}
	\frac{1}{2}\le c_2(2-\frac{1}{\alpha})\ushort \lambda_M, \quad
	c_1\le c_2(1-\alpha) \Rightarrow c_2 \geq \max(\frac{c_1}{1-\alpha}, \frac{\alpha}{(4\alpha-2)  \ushort \lambda_M}) \geq 0.\label{c2:c1}
	\end{equation}
	We thus get 
	\begin{equation}\label{eq:c2}
	|E(x,y)-E(\bar{x},\bar{x})|\le c_2\dist^2(0,\partial E(x,y)),
	\end{equation}
	and the lemma follows.
	\qed
\end{proof}

\subsection{Global convergence and local convergence rate}
Recall that $\bar{x}$ is a stationary point of $F$ if $0\in \partial F(\bar{x})$.
We will first study a property of the iteration \eqref{eq:proxi:y}. We further assume $F$ is level-bounded (see Definition 1.8 \cite{Roc1}), i.e., lev$_{\leq \alpha}F: =\{ x: F( x) \leq \alpha\}$ is bounded (or possibly empty). We employ the similar idea in  \cite{WCP} with different conditions on $P_1$ and $P_2$ here.
\begin{proposition}\label{pro:1}
	The right hand-side of \eqref{eq:proxi:y}: $g(x):=\left\langle\nabla f(y^{t})-\xi^{t}, x \right\rangle+\frac{1}{2}\|x-y^{t}\|_M^{2}+P_{1}(x)$ is a  strongly convex function. Moreover, $g(x^{t+1})\le g(x^t) -\frac{1}{2}\|x^{t+1}-x^t\|_M$ when $x^{t+1}$ is a stationary point of $g(x)$.
\end{proposition}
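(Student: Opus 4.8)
The plan is to establish the two assertions separately, as they rely on different structural facts. For strong convexity of $g$, I would first note that the linear term $\langle \nabla f(y^t)-\xi^t, x\rangle$ is affine (hence convex but not strictly so), the term $P_1(x)$ is convex by the decomposition \eqref{eq:dc:convex} (it is a nonnegative combination of a quadratic $\tfrac{\mu}{2}|\nabla u|^2$ — or $\tfrac{\mu}{2}\sum_l |\nabla_l u|^2$ — and a constant, so convex), and the quadratic penalty $\tfrac12\|x-y^t\|_M^2$ is strongly convex with modulus $\ubar\lambda_M>0$ since $M\geq LI$ with $L>0$. Summing a strongly convex function with two convex functions yields a strongly convex function, with the same modulus $\ubar\lambda_M$ (or simply $L$, using $M\geq LI$); this settles the first claim.

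For the descent inequality, the key observation is that strong convexity of $g$ gives, for any $x$ and any $z$ and any subgradient $v\in\partial g(z)$, the estimate $g(x)\geq g(z)+\langle v, x-z\rangle_{} + \tfrac12\|x-z\|_M^2$ — here I would use the $M$-modulus form, which is exactly what the structure $g(x)=(\text{affine})+\tfrac12\|x-y^t\|_M^2 + P_1(x)$ yields, since $P_1$ plus the affine part is convex and the quadratic contributes the $\tfrac12\|\cdot\|_M^2$ gap. Taking $z=x^{t+1}$, which by hypothesis is a stationary point of $g$ so that $0\in\partial g(x^{t+1})$, and taking $x=x^t$, the linear term drops out and we obtain
\begin{equation*}
g(x^t)\geq g(x^{t+1}) + \tfrac12\|x^t-x^{t+1}\|_M^2,
\end{equation*}
i.e. $g(x^{t+1})\leq g(x^t)-\tfrac12\|x^{t+1}-x^t\|_M^2$, which is the stated inequality (the exponent on the norm in the statement appears to be a typo for the squared $M$-norm).

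The main subtlety, rather than obstacle, is making precise that the "sharpened" convexity inequality $g(x)\geq g(z)+\langle v,x-z\rangle+\tfrac12\|x-z\|_M^2$ indeed holds: this follows because $g(x)-\tfrac12\|x-y^t\|_M^2$ is convex (affine plus $P_1$), and writing $g(x)=\big(g(x)-\tfrac12\|x-y^t\|_M^2\big)+\tfrac12\|x-y^t\|_M^2$ and applying the subgradient inequality to the convex part together with the exact second-order expansion of the quadratic $\tfrac12\|x-y^t\|_M^2 = \tfrac12\|z-y^t\|_M^2+\langle M(z-y^t),x-z\rangle+\tfrac12\|x-z\|_M^2$ gives the claim after recombining subgradients. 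Finally, since $F$ is level-bounded and $P_1$ is continuous with locally Lipschitz gradient while $f$ is quadratic (hence $g$ is coercive, being strongly convex), the minimizer in \eqref{eq:proxi:y} exists and is the unique stationary point of $g$, so the hypothesis "$x^{t+1}$ is a stationary point of $g$" is automatically satisfied; I would remark on this so the proposition is not vacuous.
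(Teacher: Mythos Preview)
Your proposal is correct and follows essentially the same route as the paper: both arguments decompose $g$ into a convex part (affine term plus $P_1$) and the $M$-quadratic, obtain the $M$-strong-convexity inequality $g(y)\ge g(x)+\langle v,y-x\rangle+\tfrac12\|y-x\|_M^2$ for $v\in\partial g(x)$, and then specialize to $x=x^{t+1}$ (where $0\in\partial g(x^{t+1})$) and $y=x^t$ to get the descent estimate. Your observation that the norm in the statement should be squared matches the paper's proof, and your remark on existence and uniqueness of the minimizer is a welcome addition.
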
 
\begin{proof} For any $\xi_1 \in \partial P_1(x)$, by the convexity of $\frac{1}{2}\|x-y^{t}\|_M$ and $P_1(x)$ on $x$, we have
	\begin{align}
	g(y)-g(x)&=\left\langle\nabla f(y^{t})-\xi^{t}, y-x \right\rangle+\frac{1}{2}\|y-y^{t}\|_M^{2}-\frac{1}{2}\|x-y^{t}\|_M^{2}+P_{1}(y)-P_1(x) \notag \\
	&\ge \left\langle\nabla f(y^{t})-\xi^{t}, y-x \right\rangle+\frac{1}{2}\|y-x\|_M^2+\left\langle x-y^t,y-x\right\rangle_M+\left\langle \xi_1,y-x\right\rangle \notag \\
	&=\left\langle\nabla f(y^{t})-\xi^{t}+M(x-y^t)+ \xi_1,y-x\right\rangle+\frac{1}{2}\|y-x\|_M^2 \label{eq:descent:strong} \\
	&\geq \left\langle\nabla f(y^{t})-\xi^{t}+M(x-y^t)+ \xi_1,y-x\right\rangle+\frac{L}{2}\|y-x\|^2, \ \ \forall x,y\in \dom g. \notag
	\end{align}
	Since 
	\[
	\nabla f(y^{t})-\xi^{t}+M(x-y^t)+ \xi_1 \in \partial g(x),
	\]
	we see $g(x)$ is a strongly convex function with a modulus that is not less than $L_0$. Setting $x=x^{t+1}$ and $y=x^t$, by the fact that $0\in \partial g(x)|x = x^{t+1}$, according to \eqref{eq:proxi:y}, together with \eqref{eq:descent:strong}, we have
	\begin{equation}\label{eq:descent:down}
	g(x^{t+1})\le g(x^t) -\frac{1}{2}\|x^{t+1}-x^t\|_M.
	\end{equation}   
\qed
\end{proof}
We will first show that the sequence $\{ {x^t}\}$ generated by the proposed algorithm \ref{alg:pre:dca}  converges to a stationary point of $E(x,y)$.

\begin{theorem}\label{thm:1}
	Let ${x^t}$ be a sequence generated by $preDCA_e$ for solving the minimization problem \eqref{equation:iso} or \eqref{equation:ani}. Then the following statements hold:
	\begin{itemize}
		\item [\emph{(i)}] $\displaystyle{\lim_{t \to \infty}\|x^{t+1}-x^t\|_M = 0} $,
		\item [\emph{(ii)}]  The limit $\displaystyle{\lim_{k\to\infty}E(x^t,x^{t-1}) =:\zeta}$
		\text{exists and} $E\equiv \zeta$ {on} $\Upsilon$. Henceforth, we denote  $\Upsilon$ as the set of accumulation points of the sequence $(x^t,x^{t-1})$. 
	\end{itemize}
\end{theorem}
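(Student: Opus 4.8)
The plan is to establish a single one-step \emph{sufficient decrease} inequality for the auxiliary energy $E$ of \eqref{auxiliaryfunction} along the iterates, namely
\[
E(x^{t+1},x^t)\ \le\ E(x^t,x^{t-1})\ -\ \frac{1-\beta^2}{2}\,\|x^t-x^{t-1}\|_M^2,\qquad \beta:=\sup_t\beta_t<1,
\]
and to read both statements off from it. To derive the inequality I would combine three ingredients at step $t$. First, the descent lemma for $f$ at $y^t$ together with convexity of $f$ gives $f(x^{t+1})-f(x^t)\le \langle\nabla f(y^t),x^{t+1}-x^t\rangle+\tfrac{L}{2}\|x^{t+1}-y^t\|^2$, where $L$ is a Lipschitz modulus of $\nabla f$ (which exists since $f$ is a convex quadratic; recall $M\ge LI$). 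Second, the subgradient inequality for the convex $P_2$ at $x^t$ gives $-P_2(x^{t+1})+P_2(x^t)\le -\langle\xi^t,x^{t+1}-x^t\rangle$. Third, Proposition \ref{pro:1}, whose hypothesis holds because $x^{t+1}$ is the minimizer (hence a stationary point) of the strongly convex $g$, gives $g(x^{t+1})\le g(x^t)-\tfrac12\|x^{t+1}-x^t\|_M^2$, i.e.
\[
\langle\nabla f(y^t)-\xi^t,x^{t+1}-x^t\rangle+P_1(x^{t+1})-P_1(x^t)\ \le\ \tfrac12\|x^t-y^t\|_M^2-\tfrac12\|x^{t+1}-y^t\|_M^2-\tfrac12\|x^{t+1}-x^t\|_M^2.
\]
Adding the first two estimates and inserting the third, with $F=f+P_1-P_2$, yields
\[
F(x^{t+1})-F(x^t)\ \le\ \tfrac12\|x^t-y^t\|_M^2-\tfrac12\|x^{t+1}-x^t\|_M^2+\Big(\tfrac{L}{2}\|x^{t+1}-y^t\|^2-\tfrac12\|x^{t+1}-y^t\|_M^2\Big),
\]
where the bracket is $\le0$ by $M\ge LI$; since $y^t-x^t=\beta_t(x^t-x^{t-1})$ we have $\|x^t-y^t\|_M^2\le\beta^2\|x^t-x^{t-1}\|_M^2$, and adding $\tfrac12\|x^{t+1}-x^t\|_M^2$ to both sides and recalling $E(x,y)=F(x)+\tfrac12\|x-y\|_M^2$ produces exactly the displayed inequality. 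Moreover $F=f+P\ge0$ here, so $E\ge0$ is bounded below.

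Given the sufficient-decrease inequality, the sequence $\{E(x^t,x^{t-1})\}$ is nonincreasing and bounded below, hence converges to some $\zeta$, which is the first claim in (ii). Telescoping and using $E(x^{t+1},x^t)\ge\zeta$ gives $\sum_{t\ge1}\|x^t-x^{t-1}\|_M^2\le\tfrac{2}{1-\beta^2}\big(E(x^1,x^0)-\zeta\big)<\infty$, whence $\|x^{t+1}-x^t\|_M\to0$, which is (i). For the remaining claim in (ii): monotonicity gives $F(x^t)\le E(x^t,x^{t-1})\le E(x^1,x^0)$ for $t\ge1$, so level-boundedness of $F$ makes $\{x^t\}$ bounded and $\Upsilon\neq\emptyset$, and since $\|x^{t+1}-x^t\|_M\to0$ every point of $\Upsilon$ has the form $(x^*,x^*)$. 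For a subsequence $x^{t_k}\to x^*$, continuity of $f$ and of $P$ (both $P_1$ and $P_2$ are finite-valued and continuous, hence so is $P=P_1-P_2$) together with $\|x^{t_k}-x^{t_k-1}\|_M\to0$ give $E(x^{t_k},x^{t_k-1})\to f(x^*)+P(x^*)=E(x^*,x^*)$; since also $E(x^{t_k},x^{t_k-1})\to\zeta$, we conclude $E(x^*,x^*)=\zeta$, so $E\equiv\zeta$ on $\Upsilon$.

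The one genuinely delicate point is the bookkeeping in the sufficient-decrease step — in particular using the correct directions in the convexity/subgradient estimates (ascent-type for $f$ and $P_2$ tested against $x^t$, the descent estimate of Proposition \ref{pro:1}) and exploiting $M\ge LI$ to absorb the residual $\tfrac{L}{2}\|x^{t+1}-y^t\|^2-\tfrac12\|x^{t+1}-y^t\|_M^2\le0$. Once that single inequality is in hand, the telescoping argument for (i) and the boundedness/continuity argument for (ii) are entirely routine.
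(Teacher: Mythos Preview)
Your argument is correct. Part (i) is essentially identical to the paper's proof: both combine the descent lemma for $f$, the subgradient inequality for $P_2$, the strong-convexity estimate of Proposition~\ref{pro:1}, and $M\ge LI$ to obtain the sufficient-decrease inequality \eqref{ineq:3}, then telescope.

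For part (ii), you take a shorter route than the paper. The paper argues as if $P_1$ were merely lower semicontinuous: it uses optimality of $x^{t_i}$ in the subproblem \eqref{eq:proxi:y} (tested against $\bar x$) to control $P_1(x^{t_i})$ from above, then invokes lower semicontinuity of $F$ for the reverse inequality. You instead observe that in this specific model $P_1$ and $P_2$ are both finite-valued and continuous, so $F$ is continuous and $E(x^{t_k},x^{t_k-1})\to E(\bar x,\bar x)$ follows directly. Your shortcut is perfectly valid here and more elementary; the paper's argument is the standard template from \cite{WCP} and would survive if $P_1$ were only proper closed, which is not needed for \eqref{equation:iso} or \eqref{equation:ani}.
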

\begin{proof}
	We first prove (i). By Proposition \ref{pro:1}, we can get 
	\begin{multline} \label{ineq:1}
	\left\langle\nabla f(y^t)-\xi^t,x^t\right\rangle + \frac{1}{2}\|x^{t+1} - y^t\|_M^2 +P_1(x^{t+1}) \\
	\le \left\langle\nabla f(y^t)-\xi^t,x^{t+1}\right\rangle + \frac{1}{2}\|x^t - y^t\|_M^2 +P_1(x^t) - \frac{1}{2}\|x^{t+1} - x^t\|_M^2 .
	\end{multline}
	On the other hand, since $\nabla f$ is Lipschitz continuous with a modulus of $L$,  we have
	\begin{align}
	&f(x^{t+1}) + P(x^{t+1}) \le f(y^t) + \langle\nabla f(y^t),x^{t+1} -y^t\rangle +\frac{L}{2}\|x^{t+1} -y^t\|^2 +P_1(x^{t+1}) - P_2(x^{t+1}) \notag\\
	&\le f(y^t) + \langle\nabla f(y^t),x^{t+1} -y^t\rangle +\frac{1}{2}\|x^{t+1} -y^t\|_M^2 +P_1(x^{t+1}) - P_2(x^{t+1}) \notag \\
	&\le f(y^t) + \langle\nabla f(y^t),x^{t+1} -y^t\rangle +\frac{1}{2}\|x^{t+1} -y^t\|_M^2 +P_1(x^{t+1}) - P_2(x^t)-\langle\xi^t,x^{t+1}-x^t\rangle \notag \\
	&\le f(y^t) + \langle\nabla f(y^t),x^t-y^t\rangle+\frac{1}{2}\|x^t-y^t\|_M^2 +P_1(x^t)-P_2(x^t)-\frac{1}{2}\|x^{t+1}-x^t\|_M^2 \notag \\
	&\le f(x^t) +P(x^t)+\frac{1}{2}\|x^t-y^t\|_M^2 -\frac{1}{2}\|x^{t+1}-x^t\|_M^2, \label{ineq:2}
	\end{align}
	where the second inequality follows from $M\geq L_0 \bds{I} \geq L \bds{I}$, the third one comes from the fact that $\xi^t \in \partial P_2(x^t)$, the fourth inequality follows from \eqref{eq:descent:down} and the fifth one by the convexity of $f$. From \eqref{ineq:2}, we have
	\begin{equation*}
	f(x^{t+1})+P(x^{t+1})\le f(x^t)+P(x^t)+\frac{1}{2}\beta_{t}^2\|x^t-x^{t-1}\|_M^2-\frac{1}{2}\|x^{t+1}-x^t\|_M^2.
	\end{equation*}
	Then, we can obtain that
	\begin{align}
	&\frac{1}{2}(1-\beta_{t}^2)\|x^t-x^{t-1}\|_M^2\le \left[f(x^t)+P(x^t)+\frac{1}{2}\|x^t-x^{t-1}\|_M^2\right] \notag\\
	&-\left[f(x^{t+1}) +P(x^{t+1})+\frac{1}{2}\|x^{t+1}-x^t\|_M^2\right]=E(x^t,x^{t-1})-E(x^{t+1},x^t). \label{ineq:3}
	\end{align}
	Since  ${\beta_{t}}\in\left[0,1\right)$, we see from (\ref{ineq:3}) that ${f(x^t)+P(x^t)+\frac{1}{2}\|x^t-x^{t-1}\|_M^2}$ is nonincreasing. We can thus get that
	\[
	f(x^t)+P(x^t)\le f(x^t)+P(x^t)+\frac{1}{2}\|x^t-x^{t-1}\|_M^2\le f(x^0)+P(x^0), \ \ \forall t\ge 0,
	\]
	which shows that ${x^t}$ is bounded by the level-boundedness of $F$ (Definition 1.8 of \cite{Roc1} and \cite{WCP}) and $F(x)\ge 0$. Then summing up both sides of (\ref{ineq:3}) from $t=0$ to $\infty$, we obtain
	\begin{align*}
	\frac{1}{2}\sum_{t=0}^{\infty}(1-\beta_{t}^2)\|x^t-x^{t-1}\|_M^2 &\le f(x^0)+P(x^0)-\liminf_{t \to \infty }\left[f(x^{t+1})+P(x^{t+1})+\frac{1}{2}\|x^{t+1}-x^t\|_M^2\right] \\
	&\le f(x^0)+P(x^0) < \infty .
	\end{align*}
	Since $\sup_{t} \beta_{t} <1$, we deduce from the above inequation that $ \sum_{t=1}^{\infty}\|x^t-x^{t-1}\|_M^2<\infty$ and $\lim_{t \to \infty}\|x^{t+1}-x^t\|_M^2=0$. This proves (i).
	
	Now we prove (ii), it can be seen that the sequence ${E(x^t,x^{t-1})}$ is nonincreasing form (\ref{ineq:3}). Together with the fact that $\Upsilon$ is a nonempty compact set due to ${x^t}$ is bounded, we conclude that $\zeta :=\lim_{k\to\infty}E(x^t,x^{t-1})$ exists.
	Now, let's show $E\equiv \zeta$ on $\Upsilon$. Taking any $(\bar{x},\bar{x})\in\Upsilon$, there exists a convergent subsequence $(x^{t_i},x^{t_i-1}) $ such that $\lim_{i \to \infty}(x^{t_i},x^{t_i-1}) =(\bar{x},\bar{x})$. Using the fact that $x^{t_i}$ is the minimizer of the subproblem in \eqref{eq:proxi:y}, we have
	\begin{align*}
	&P_{1}\left(x^{t_{i}}\right)+\left\langle\nabla f\left(y^{t_{i}-1}\right)-\xi^{t_{i}-1}, x^{t_{i}}\right\rangle+\frac{1}{2} \|x^{t_{i}}-y^{t_{i}-1}\|_M^{2} \\
	&\leq P_{1}(\bar{x})+\left\langle\nabla f\left(y^{t_{i}-1}\right)-\xi^{t_{i}-1}, \bar{x}\right\rangle+\frac{1}{2} \|\bar{x}-y^{t_{i}-1}\|_M^{2}.
	\end{align*}
	Rearranging terms above, we obtain  
	\begin{equation}\label{eq:expan:p1}
	P_{1}\left(x^{t_{i}}\right)+\left\langle\nabla f\left(y^{t_{i}-1}\right)-\xi^{t_{i}-1}, x^{t_{i}}-\bar{x}\right\rangle+\frac{1}{2}\|x^{t_{i}}-y^{t_{i}-1}\|_M^{2} \leq P_{1}(\bar{x})+\frac{1}{2} \|\bar{x}-y^{t_{i}-1}\|_M^{2}.
	\end{equation}
	Furthermore, we observe 
	\begin{equation*}
	\begin{aligned}
	\|\bar{x}-y^{t_i-1}\|_M&=\|\bar{x}-x^{t_i}+x^{t_i}-y^{t_i-1}\|_M\le\|\bar{x}-x^{t_i}\|_M+\|x^{t_i}-y^{t_i-1}\|_M \\
	&=\|\bar{x}-x^{t_i}\|_M+\left\|x^{t_{i}}-x^{t_{i}-1}-\beta_{t_{i}-1}\left(x^{t_{i}-1}-x^{t_{i}-2}\right)\right\|_M \\
	&\le \|\bar{x}-x^{t_i}\|_M+\|x^{t_i}-x^{t_i-1}\|_M+\|x^{t_i-1}-x^{t_i-2}\|_M.
	\end{aligned}
	\end{equation*}
	Since $\|x^{t+1}-x^t\|_M\to 0$ and $\lim_{i\to\infty}x^{t_i}=\bar{x}$, we have
	\begin{equation*}
	\|\bar{x}-y^{t_{i}-1} \|_M \to 0 \  \text { and } \ \|x^{t_{i}}-y^{t_{i}-1} \|_M \to 0.
	\end{equation*}
	Moreover, with \eqref{eq:expan:p1}, we obtain
	\begin{equation*}
	\begin{aligned} 
	\zeta &=\lim _{i \to \infty} f\left(x^{t_{i}}\right)+P\left(x^{t_{i}}\right) \\ 
	&=\lim _{i \to \infty} f\left(x^{t_{i}}\right)+P\left(x^{t_{i}}\right)+\left\langle\nabla f\left(y^{t_{i}-1}\right)-\xi^{t_{i}-1}, x^{t_{i}}-\bar{x}\right\rangle+\frac{1}{2}\left\|x^{t_{i}}-y^{t_{i}-1}\right\|_M^{2} \\ 
	& \leq \limsup _{i \to \infty} f\left(x^{t_{i}}\right)+P_{1}(\bar{x})-P_{2}\left(x^{t_{i}}\right)+\frac{1}{2}\left\|\bar{x}-y^{t_{i}-1}\right\|_M^{2}=F(\bar{x}).
	\end{aligned}
	\end{equation*}
	Since $F$ is lower semicontinuous, we have
	\begin{equation}
	F(\bar{x}) \leq \liminf _{i \to \infty} F\left(x^{t_{i}}\right)=\lim _{i \to \infty} F\left(x^{t_{i}}\right)=\zeta.
	\end{equation}
	Consequently, $F(\bar{x})=\liminf_{i\to\infty}F(x^{t_i})=\zeta$. Noting that for any $(\bar{x},\bar{x})\in\Upsilon$, we have $E(\bar{x},\bar{x})=F(\bar{x})=\zeta$. We thus conclude $E\equiv\zeta$ on $\Upsilon$ and (ii) follows.
	\qed
\end{proof}

	\begin{theorem}\label{thm:stationary}
		Any accumulation point of   ${x^t}$ is a stationary point of $F$. Furthermore, we have $\sum^\infty_{k=1}\|x^t-x^{t-1}\|$$\leq\infty$.
	\end{theorem}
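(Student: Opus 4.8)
The plan is to treat the two assertions separately, leaning on Theorem~\ref{thm:1} and the KL apparatus already built up. For the first assertion, let $\bar x$ be an accumulation point of $\{x^t\}$, say $x^{t_i}\to\bar x$. Since $\|x^{t+1}-x^t\|_M\to 0$ by Theorem~\ref{thm:1}(i) and $\sup_t\beta_t<1$, also $x^{t_i-1}\to\bar x$ and $y^{t_i-1}\to\bar x$. The first-order optimality condition for the subproblem \eqref{eq:proxi:y} at index $t_i-1$ reads
\begin{equation*}
\nabla P_1(x^{t_i}) = \xi^{t_i-1}-\nabla f(y^{t_i-1})-M(x^{t_i}-y^{t_i-1}),\qquad \xi^{t_i-1}\in\partial P_2(x^{t_i-1}),
\end{equation*}
where $P_1=P_1^I$ or $P_1^A$ is continuously differentiable. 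Since $P_2$ is finite-valued convex, $\partial P_2$ is locally bounded, so $\{\xi^{t_i-1}\}$ is bounded; passing to a further subsequence $\xi^{t_{i_j}-1}\to\bar\xi$, the closedness of the graph of $\partial P_2$ gives $\bar\xi\in\partial P_2(\bar x)$. Letting $j\to\infty$ and using continuity of $\nabla f$, $\nabla P_1$ together with $M(x^{t_{i_j}}-y^{t_{i_j}-1})\to 0$, one obtains $\nabla f(\bar x)+\nabla P_1(\bar x)=\bar\xi\in\partial P_2(\bar x)$, i.e.\ $0\in\partial F(\bar x)$, so $\bar x$ is a stationary point of $F$.

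For the second assertion I would run the standard Kurdyka--\L ojasiewicz finite-length argument for the auxiliary function $E$ of \eqref{auxiliaryfunction} along the variable $z^t:=(x^t,x^{t-1})$, verifying three ingredients. (i) \emph{Sufficient decrease}: this is already \eqref{ineq:3}, which gives $E(z^t)-E(z^{t+1})\ge\tfrac12(1-\beta_t^2)\|x^t-x^{t-1}\|_M^2\ge c_3\|x^t-x^{t-1}\|^2$ with $c_3>0$. (ii) \emph{A subgradient bound}: using the optimality condition at index $t-1$ to substitute for $\nabla P_1(x^t)$, one checks that
\begin{equation*}
\Big(\,\nabla f(x^t)-\nabla f(y^{t-1})+M\beta_{t-1}(x^{t-1}-x^{t-2})+\xi^{t-1}-\eta^t,\ M(x^{t-1}-x^t)\,\Big)\in\partial E(z^t)
\end{equation*}
for any $\eta^t\in\partial P_2(x^t)$, so that, since $\nabla f$ is $L$-Lipschitz,
\begin{equation*}
\dist(0,\partial E(z^t))\ \le\ b\big(\|x^t-x^{t-1}\|+\|x^{t-1}-x^{t-2}\|\big)+\dist\big(\xi^{t-1},\partial P_2(x^t)\big),
\end{equation*}
and the aim is to show, for a suitable selection of the subgradients, that the last term is also $O(\|x^t-x^{t-1}\|)$, yielding $\dist(0,\partial E(z^t))\le b'(\|x^t-x^{t-1}\|+\|x^{t-1}-x^{t-2}\|)$. (iii) \emph{Constancy on the limit set}: by Theorem~\ref{thm:1}(ii), $E\equiv\zeta$ on the nonempty compact set $\Upsilon$; by the first part together with Lemmas~\ref{lem:KL} and~\ref{lemma:relationship} (applied with the role of the ``$f$'' in Lemma~\ref{lemma:relationship} played by the objective $F$ of \eqref{equation:iso}/\eqref{equation:ani}, whose KL exponent is $\tfrac12$ by Lemma~\ref{lem:KL}), $E$ has the KL property at every point of $\Upsilon$, and Lemma~\ref{lem:uni:KL} then furnishes a single $\psi$, which may be taken $\psi(s)=cs^{1/2}$, with $\psi'(E(z^t)-\zeta)\,\dist(0,\partial E(z^t))\ge 1$ for all large $t$.

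Granting (i)--(iii), set $r_t:=E(z^t)-\zeta\ge 0$, which decreases to $0$ (if $r_{t_0}=0$ for some $t_0$ the sequence is eventually constant and there is nothing to prove). Concavity of $\psi$ gives $\psi(r_t)-\psi(r_{t+1})\ge\psi'(r_t)(r_t-r_{t+1})\ge c_3\|x^t-x^{t-1}\|^2/\big(b'(\|x^t-x^{t-1}\|+\|x^{t-1}-x^{t-2}\|)\big)$; combining this with the elementary inequality $\sqrt{\alpha\gamma}\le\tfrac14\alpha+\gamma$ and summing over $t$ (the right-hand side telescopes since $\psi(r_t)\downarrow 0$) yields $\sum_{t\ge 1}\|x^t-x^{t-1}\|<\infty$. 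In particular $\{x^t\}$ is a Cauchy sequence, hence converges, and its limit is the unique accumulation point, which is a stationary point of $F$ by the first part.

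The step I expect to be the main obstacle is (ii), and within it the estimate of $\dist(\xi^{t-1},\partial P_2(x^t))$. The element of $\partial E(z^t)$ produced by the subproblem's optimality condition necessarily carries the \emph{old} subgradient $\xi^{t-1}\in\partial P_2(x^{t-1})$, whereas $\partial E(z^t)$ refers to the subdifferential of $P_2$ at the \emph{current} iterate $x^t$; since $P_2^I$ and $P_2^A$ are nonsmooth, the distance between $\xi^{t-1}$ and $\partial P_2(x^t)$ need not be comparable to $\|x^t-x^{t-1}\|$ for an arbitrary choice of subgradients. Closing this gap requires exploiting the explicit separable structure of $P_2^I$ and $P_2^A$ as coordinatewise maxima of a smooth convex quadratic and a constant, together with a deliberate rule for selecting $\xi^t\in\partial P_2(x^t)$ along the iteration so that the per-coordinate subgradient defect is controlled by the increment $\|x^t-x^{t-1}\|$; the remaining pieces of the argument are then routine.
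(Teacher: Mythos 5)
Your proof of the first assertion is correct and is essentially the paper's argument: optimality condition of \eqref{eq:proxi:y}, boundedness and graph-closedness of $\partial P_2$, and $\|x^{t+1}-x^t\|_M\to 0$.

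For the second assertion there is a genuine gap, and it is exactly the one you flag in step (ii): the term $\dist\bigl(\xi^{t-1},\partial P_2(x^t)\bigr)$ is \emph{not} $O(\|x^t-x^{t-1}\|)$ in general, and no selection rule for the subgradients can make it so. Concretely, $P_2^I$ and $P_2^A$ are per-pixel maxima of a quadratic and a constant; if at some pixel $|\nabla x^{t-1}|$ lies just above the threshold $\sqrt{\lambda/\mu}$ and $|\nabla x^t|$ just below it, then $\xi^{t-1}$ at that pixel equals $\mu(\nabla^*\nabla x^{t-1})$ (magnitude of order $\mu\sqrt{\lambda/\mu}$, bounded away from $0$), while $\partial P_2(x^t)$ at that pixel is the singleton $\{0\}$; the distance is therefore bounded below by a positive constant even though $\|x^t-x^{t-1}\|$ can be arbitrarily small. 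So the route through the optimality condition at index $t-1$ cannot be closed by exploiting the separable structure.

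The fix — which is what the paper does — is to shift the index: use the optimality condition of the subproblem that \emph{produces} $x^{t+1}$, namely
\begin{equation*}
-M\bigl[(x^{t+1}-x^{t})-\beta_t(x^{t}-x^{t-1})\bigr]\in\nabla P_1(x^{t+1})+\nabla f(y^{t})-\xi^{t},\qquad \xi^{t}\in\partial P_2(x^{t}).
\end{equation*}
Here the subgradient $\xi^t$ is already taken at the current point $x^t$, which is precisely the point at which $\partial E(x^t,x^{t-1})$ evaluates $\partial P_2$, so no subgradient transfer is needed. Substituting $\xi^t$ into the formula for $\partial E(x^t,x^{t-1})$ yields the element
\begin{equation*}
\Bigl(M\bigl(x^t-x^{t+1}+(1+\beta_t)(x^t-x^{t-1})\bigr)+\nabla f(x^t)-\nabla f(y^t)+\nabla P_1(x^t)-\nabla P_1(x^{t+1}),\ -M(x^t-x^{t-1})\Bigr)^T,
\end{equation*}
whence $\dist\bigl((0,0),\partial E(x^t,x^{t-1})\bigr)\le C\bigl(\|x^t-x^{t-1}\|+\|x^{t+1}-x^{t}\|\bigr)$ by local Lipschitz continuity of $\nabla f$ and $\nabla P_1$. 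The KL/telescoping machinery you describe then goes through verbatim with this forward-difference bound in place of your backward-difference one. The remainder of your outline (sufficient decrease, uniformized KL via Lemmas~\ref{lem:KL}, \ref{lemma:relationship} and \ref{lem:uni:KL}, concavity of $\psi$, and summation) is correct and matches the paper.
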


\begin{proof}
	With the same assumption of Theorem \ref{thm:1}, let $\bar{x}$ be an accumulation of ${x^t}$. By the first-order optimality condition of the subproblem \eqref{eq:proxi:y}, we get 
	\begin{equation*}
	-M(x^{t+1}-y^t) \in \nabla P_1(x^{t+1}) + \nabla f(y^t) -\xi^t. 
	\end{equation*}
	With the fact $ y^{t} = x^t + \beta_{t}(x^t-x^{t-2})$, we obtain that\\
	\begin{equation} \label{ineq:converge condition}
	-M[(x^{t+1}-x^t)-\beta_t(x^t-x^{t-1})] \in \nabla P_1(x^{t+1}) +\nabla f(y^t) -\xi^t.
	\end{equation}
	Because of the convexity of $P_2$ and the the boundeness of ${x^t}$, by passing to a subsequence if necessary, then $\lim_{i \to \infty}\xi^t$ exists without loss of generality, which belongs to $ \partial P_2(\bar{x})$ due to the closedness of $ \partial P_2$ (Theorem 8.6 \cite{Roc1}). Using the fact that $\|x^{t+1}-x^t\|_M^2 \to 0$ from Theorem \ref{thm:1} (ii) together with the closedness of $\nabla P_1$ and $\nabla f$, we get upon passing to the limit in \eqref{ineq:converge condition} that 
	\[
	0 \in \nabla P_1(\bar{x}) + \nabla f(\bar{x}) -\partial P_2(\bar{x}).
	\] 
	Then, considering the subdifferential of the function $E(x,y)$ at the point $(x^t,x^{t-1})$, we have 
	\begin{equation}
	\partial E\left(x^{t}, x^{t-1}\right)=\left(\nabla f(x^{t})+\nabla P_{1}(x^{t})+M(x^{t}-x^{t-1})-\partial P_{2}(x^{t}),  -M(x^{t}-x^{t-1})\right)^T.
	\end{equation}
	On the other hand, with \eqref{ineq:converge condition} and the fact $\xi^t \in \partial P_2(x^t)$,
	 we have 
	\begin{align*}
	&(M(x^t-x^{t+1} + (1+ \beta_t)(x^{t}-x^{t-1}))+\nabla f(x^t)-\nabla f(y^t)+\nabla P_1(x^t)-\nabla P_1(x^{t+1}), \\
	&-M(x^t-x^{t-1}))^T \in\partial E(x^t,x^{t-1}).
	\end{align*}
	Together with the fact that $\nabla f, \nabla P_1$ is Lipschitz continuous on a bounded set and $M \geq L \bds{I}$, we see that there exists $C_0>0$ such that
	\begin{equation}\label{distance:partialE}
	\begin{aligned}
	\dist((0,0),\partial E(x^t,x^{t-1}))&\le C_0(\|x^t-x^{t-1}\|_M+\|x^{t+1}-x^{t}\|_M)\\
	&\leq C(\|x^t-x^{t-1}\|+\|x^{t+1}-x^{t}\|),
	\end{aligned}
	\end{equation}
	where the constant $C$ depending on $M$ and $C_0$.
	We rewrite \eqref{ineq:3} as
	\begin{equation}\label{E:le}
	E(x^{t}, x^{t-1})-E(x^{t+1}, x^{t}) \geq D_0 \|x^{t}-x^{t-1}\|_M^{2} \geq D \|x^{t}-x^{t-1}\|^{2}.
	\end{equation}
	Then, we first consider the case that there exists a $t>0$ such that $E(x^t,x^{t-1})=\zeta$. Since ${E(x^t,x^{t-1})}$ is decreasing with the limit $\zeta$,  we thus have $E(\bar{x}^t,\bar{x}^{t-1})=\zeta$ for any $\bar{t}>t$.
	 Hence, $\sum_{t=0}^\infty \|x^t-x^{t-1}\|_M <\infty$ follows easily.
	We next consider the case that $E(x^t,x^{t-1})>\zeta$, $\forall t>0$. Since $E$ is a KL function and $E\equiv \zeta$ on $\Upsilon$, by Lemma \ref{lem:uni:KL}, there exist an $\epsilon>0$ and a continuous concave function $\psi$ with $a>0$ such that
	\begin{equation}\label{E:KL}
	\psi'(E(x,y)-\zeta)\text{dist}((0,0),\partial E(x,y)) \ge 1,\ \ \forall (x,y)\in U,
	\end{equation}
	where $U=\left\{(x, y) \in \mathbb{R}^{n} \times \mathbb{R}^{n}: \operatorname{dist}((x, y), \Upsilon)<\epsilon\right\}\cap$ $\left\{(x, y) \in \mathbb{R}^{n} \times \mathbb{R}^{n}: \zeta<E(x, y)<\zeta+a\right\}$.
	Moreover, we can get that there exists $T>0$ such that
	\begin{equation}\label{KL:phi}
	\psi^{\prime}\left(E(x^{t}, x^{t-1})-\zeta\right) \cdot \operatorname{dist}\left((0,0), \partial E(x^{t}, x^{t-1})\right) \geq 1, \ \ \forall t\ge T.
	\end{equation}
	Due to $\lim_{t \to \infty}\text{dist}((x^t,x^{t-1}),\Upsilon) =0$, there thus exists $T_1>0$ such that $\dist((x^t,x^{t-1}),\Upsilon)<\epsilon$ \text{whenvere} $t\ge T_1$.
	From the concavity of $\psi$, we see that 
	\begin{equation*}
	\begin{array}{l}{\left[\psi\left(E(x^{t}, x^{t-1})-\zeta\right)-\psi\left(E(x^{t+1}, x^{t})-\zeta\right)\right] \cdot \operatorname{dist}\left((0,0), \partial E(x^{t}, x^{t-1})\right)} \\ {\left.\geq \psi^{\prime}\left(E(x^{t}, x^{t-1})-\zeta\right)\right) \cdot \operatorname{dist}\left((0,0), \partial E(x^{t}, x^{t-1})\right) \cdot\left[E(x^{t}, x^{t-1})-E(x^{t+1}, x^{t})\right]} \\ {\geq E(x^{t}, x^{t-1})-E(x^{t+1}, x^{t})}.\end{array}
	\end{equation*}
	Combining this with (\ref{distance:partialE}) and (\ref{E:le}), we can get that for any $t\ge T$,
	\begin{align*}
	\|x^{t}-x^{t-1}\|^{2} \leq & \frac{C}{D}\left[\psi\left(E(x^{t}, x^{t-1})-\zeta\right)-\psi\left(E(x^{t+1}, x^{t})-\zeta\right)\right] \\
	& \cdot\left(\|x^{t}-x^{t-1}\|+\|x^{t-1}-x^{t-2}\|\right).
	\end{align*}
	Moreover, we can see further that (by the inequality $a \leq \sqrt{cd}$ $\Rightarrow$  $a \leq c + \frac{d}{4}$ for $a, b,c \geq 0$)
	\begin{equation}\label{ineq:xt}
	\begin{aligned}
	\frac{1}{2}\|x^{t}-x^{t-1}\| \leq& \frac{C}{D}\left[\psi\left(E(x^{t}, x^{t-1})-\zeta\right)-\psi\left(E(x^{t+1}, x^{t})-\zeta\right)\right] \\
	&+\frac{1}{4}\left(\|x^{t-1}-x^{t-2}\| - \|x^{t}-x^{t-1}\|\right).
	\end{aligned}
	\end{equation}
	Summing up the above relation from $t=T$ to $\infty$, we have
	\begin{equation}
	\sum_{t=T}^{\infty}\|x^{t}-x^{t-1}\| \leq \frac{2 C}{D} \psi\left(E(x^{T}, x^{T-1})-\zeta\right)+\frac{1}{2}\|x^{T-1}-x^{\bar{T}-2}\|<\infty.
	\end{equation}
 Thus $\{ x^t\}$ is a Cauchy sequence and its global convergence follows.
 \qed 
\end{proof}	

\begin{remark}\label{rem:wcp}
{Actually, the proofs in Theorems \ref{thm:1} and \ref{thm:stationary} have a lot of differences from the proofs in \cite{WCP}. These are mainly because of two reasons. The first is the proximal term $\|y-y^t\|_{M}^2/2$ designed for preconditioning which is different from \cite{WCP} where $M=L\bds{I}$. The second is the conditions on the functions $P_1$ and $P_2$ are different from \cite{WCP}  as mentioned in section \ref{sec:intro}.}
\end{remark}

We next consider the convergence rate of the sequence $\{x^t\}$ under the condition that the auxiliary function $E$ is a KL function at the stationary point whose $\psi $ takes the form $\psi(s)=cs^{1-\theta}$ for $\theta=\frac{1}{2}$, {which can be guaranteed by Theorem \ref{thm:KL} and Lemma \ref{lemma:relationship}}. This kind of convergence rate analysis is standard; see \cite{AB,ABRC,LP,WCP} for more comprehensive analysis. We follow a similar line of arguments for the local convergence analysis based on the KL property.

\begin{theorem}\label{thm:converge rate}
	Let ${x^t}$ be a sequence generated by preDCA$_{e}$ for solving \eqref{equation:iso} or \eqref{equation:ani} and suppose that ${x^t}$ converges to some $\bar{x}$.  Since $E$ is a KL function with $\psi$ in KL inequality taking the form $\psi(s)=cs^{1-\theta}$ for $\theta=\frac{1}{2}$ and $c>0$ at the stationary point, then there exist $c_1>0,t_0>0$ and $ \eta\in(0,1)$ such that $\|x^t -\bar{x}\| <c_1\eta^t $ for $\forall t>t_0$.
\end{theorem}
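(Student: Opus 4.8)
The plan is to follow the now-standard KL-based convergence-rate argument (as in \cite{AB,ABRC,LP,WCP}), specialized to the auxiliary function $E$ with exponent $\theta=\frac12$. Write $r_t := \sum_{i=t}^{\infty}\|x^i-x^{i-1}\|$, which is finite by the previous theorem; since $\|x^t-\bar x\| \le r_t$, it suffices to show $r_t$ decays geometrically. First I would fix $t$ large enough (at least $\max(T,T_1)$ from the previous proof) so that the uniformized KL inequality \eqref{KL:phi} holds and $(x^t,x^{t-1})\in U$. The two ingredients already in hand are: the sufficient-decrease estimate \eqref{E:le}, $E(x^t,x^{t-1})-E(x^{t+1},x^t)\ge D\|x^t-x^{t-1}\|^2$, and the relative-error estimate \eqref{distance:partialE}, $\dist((0,0),\partial E(x^t,x^{t-1}))\le C(\|x^t-x^{t-1}\|+\|x^{t+1}-x^t\|)$.

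Next I would exploit the explicit form $\psi(s)=cs^{1/2}$, so $\psi'(s)=\frac{c}{2}s^{-1/2}$. Abbreviating $\Delta_t := E(x^t,x^{t-1})-\zeta \ge 0$, the KL inequality \eqref{KL:phi} gives $\Delta_t^{1/2}\le \frac{c}{2}\dist((0,0),\partial E(x^t,x^{t-1})) \le \frac{cC}{2}(\|x^t-x^{t-1}\|+\|x^{t+1}-x^t\|)$, hence $\Delta_t \le \big(\tfrac{cC}{2}\big)^2(\|x^t-x^{t-1}\|+\|x^{t+1}-x^t\|)^2$. Combining with \eqref{E:le} in the form $\Delta_t-\Delta_{t+1}\ge D\|x^t-x^{t-1}\|^2$, one obtains a recursion of the shape $\|x^t-x^{t-1}\|^2 \le \frac{1}{D}(\Delta_t-\Delta_{t+1})$ together with a bound $\Delta_t \le C'\,(\Delta_{t}-\Delta_{t+1})$ up to absorbing the cross terms — more precisely, summing the telescoped inequality \eqref{ineq:xt} from $t$ to $\infty$ (which already appears in the preceding proof) yields $r_t = \sum_{i\ge t}\|x^i-x^{i-1}\| \le \frac{2C}{D}\psi(\Delta_t)+\frac12\|x^{t-1}-x^{t-2}\| = \frac{2C}{D}\psi(\Delta_t) + \frac12(r_{t-1}-r_t)$. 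Then I would bound $\psi(\Delta_t)=c\Delta_t^{1/2}$ using the KL and sufficient-decrease inequalities by a constant multiple of $\|x^t-x^{t-1}\|+\|x^{t+1}-x^t\| \le r_{t-1}-r_{t+1}$, which produces the linear recursion $r_t \le a_0(r_{t-1}-r_{t+1}) + \tfrac12(r_{t-1}-r_t)$ for some constant $a_0>0$, i.e. $r_t \le \frac{a_0+1/2}{a_0+1} r_{t-1}$ after rearranging (using $r_{t+1}\le r_t$). Setting $\eta := \frac{a_0+1/2}{a_0+1}\in(0,1)$ gives $r_t \le \eta^{\,t-t_0} r_{t_0}$ for all $t>t_0$, and choosing $c_1 := r_{t_0}\eta^{-t_0}$ finishes it since $\|x^t-\bar x\|\le r_t$.

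I would present the argument as: (1) quote \eqref{E:le} and \eqref{distance:partialE}; (2) write the KL inequality \eqref{KL:phi} with $\psi(s)=cs^{1/2}$ explicitly; (3) derive $\psi(\Delta_t)\le \mathrm{const}\cdot(\|x^t-x^{t-1}\|+\|x^{t+1}-x^t\|)$ by chaining these; (4) insert into the telescoped sum \eqref{ineq:xt} already established, obtaining a recursion purely in $r_t$; (5) solve the recursion to get geometric decay and conclude.

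The main obstacle — really just bookkeeping — is handling the cross/shifted indices cleanly: the relative error at step $t$ involves $\|x^{t+1}-x^t\|$, the decrease bound naturally produces $\|x^t-x^{t-1}\|$, and $\psi$ is evaluated at $\Delta_t$, so one must be careful to package all of these into a single telescoping quantity (the tail sum $r_t$) and to verify the constant $\eta$ that emerges is strictly less than $1$. One also must restrict to $t\ge t_0$ large enough that all points lie in the KL neighborhood $U$ and that $\Delta_t$ is small enough for the exponent-$\frac12$ inequality to apply; this is exactly the regime guaranteed by convergence $x^t\to\bar x$ together with $\Delta_t\downarrow 0$ and the uniformized KL Lemma \ref{lem:uni:KL}, all of which were already invoked in the preceding theorem. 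No new analytic input beyond Lemmas \ref{lemma:quadratic}–\ref{lemma:relationship} and the earlier theorems is needed.
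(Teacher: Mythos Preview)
Your proposal is correct and follows essentially the same route as the paper: define the tail sum of increments, combine the telescoped bound \eqref{ineq:xt} with the KL inequality $\psi(s)=cs^{1/2}$ and the relative-error estimate \eqref{distance:partialE} to obtain a linear recursion in the tail sums, and conclude geometric decay of $\|x^t-\bar x\|$. The only cosmetic difference is that the paper keeps a two-step recursion $S_t\le (C_1+\tfrac12)(S_{t-2}-S_t)$ (yielding $\eta=\sqrt{(2C_1+1)/(2C_1+3)}$) rather than collapsing to a one-step contraction via $r_{t+1}\le r_t$ as you do; also note your index $r_{t-1}-r_{t+1}$ should read $r_t-r_{t+2}$ and your final ratio should be $\frac{a_0+1/2}{a_0+3/2}$, but these are harmless bookkeeping slips.
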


\begin{proof}
	If there exists $t_0>0$ such that $E(x^{t_0},x^{t_0-1})=\zeta$, then one can show that ${x^t}$ is finitely convergent as before and the local linear convergence holds trivially. Hence, we only consider the case when $E(x^{t},x^{t-1}) > \zeta$, $\forall t>0$. 
	Define $\Delta_t=E(x^t,x^{t-1})-\zeta$ \text{and} $S_t=\sum_{i=t}^\infty\|x^{i+1}-x^i\|$, where $S_t$ is well-define thanks to Theorem \ref{thm:1} (ii). Then, using (\ref{ineq:xt}), we have for any $t>T$ that
	\begin{align*}
	S_t&=2\sum_{i=t}^\infty\frac{1}{2} \|x^{i+1}-x^i\| \le2\sum_{i=t}^\infty\frac{1}{2}\|x^i-x^{i-1}\|\\
	&\le 2\sum_{i=t}^{\infty}\left[\frac{C}{D}\left[\phi(E(x^{i}, x^{i-1})-\zeta)-\phi(E(x^{i+1}, x^{i})-\zeta)\right]+\frac{1}{4}(\|x^{i-1}-x^{i-2}\|-\|x^{i}-x^{i-1}\|)\right] \\
	&\le \frac{2C}{D}\phi(E(x^t,x^{t-1})-\zeta)+\frac{1}{2} \|x^{t-1}-x^{t-2}\|\\
	&=\frac{2C}{D}\phi(\Delta_t)+\frac{1}{2}(S_{t-2}-S_{t-1})\le\frac{2C}{D}\phi(\Delta_t)+\frac{1}{2}(S_{t-2}-S_t),
	\end{align*}
	where the last inequality follows from the fact that ${S_t}$ is nonincreasing.  By \eqref{KL:phi} with $\psi(s)=cs^{\frac{1}{2}}$,  for all sufficiently large $t$,
	\[
	\frac{c}{2}\Delta_t^{-\frac{1}{2}}\text{dist}((0,0),\partial E(x^t,x^{t-1}))\ge 1.
	\]
	 Rewriting (\ref{distance:partialE}) by the definition of $S_t$, we see that for all sufficiently large $t$,
	\[
	\text{dist}((0,0),\partial E(x^t,x^{t-1}))\le C(S_{t-2}-S_t).
	\]
	We thus can get 
	\[
	(\Delta_t)^{\frac{1}{2}}\le \frac{Cc}{2}(S_{t-2}-S_t).
	\]
	Combining this with $S_t\le \frac{2C}{D}\phi(\Delta_t)+\frac{1}{2}(S_{t-2}-S_t)$, we see that for all sufficiently large $t$,
	\begin{equation}
	S_t\le C_1(S_{t-2}-S_t)+\frac{1}{2}(S_{t-2}-S_t)=(C_1+\frac{1}{2})(S_{t-2}-S_t),
	\end{equation}
	where $C_1 =\frac{c^2C^2}{D}$. Hence,
	\begin{equation}\label{eq:rate:upper:bound}
	\|x^t-\bar{x}\| \le\sum_{i=t}^\infty \|x^{i+1}-x^i\| =S_t\le S_{t_1-2}\eta
	^{t-t_1+1},\quad \eta : =\sqrt{\frac{2C_1+1}{2C_1+3}},
	\end{equation}
	which completes the proof.
	\qed
\end{proof}

\begin{remark}
	As $L_0$ in Lemma \ref{lem:feaible:percon} is sufficiently large, the upper bound of the convergence rate $\eta$ in \eqref{eq:rate:upper:bound}  would decrease as the condition number of $M$ increases.
\end{remark}

\begin{proof}
 Suppose the minmial and maximal eigenvalues of $M$ are $ \ushort{\lambda}_M$ and $\bar \lambda_M$.	We can see that the convergence rate  is related to $c,C$ and $D$ from \eqref{eq:rate:upper:bound}. Firstly, we see that $c$ is not related to $M$ for large $L_0$, since $\frac{c_1}{1-\alpha} \ge \frac{\alpha}{(4\alpha-2) \ushort\lambda_M}$ by \eqref{c2:c1}, \eqref{eq:c2} and $M\geq L_0\bds{I}$ when $L_0$ is large enough. Note that here $c$ is related to $c_2$ in \eqref{eq:c2}. Furthermore, we can choose $ D= \ushort\lambda_MD_0$ from (\ref{E:le}) and $C =\sqrt{\bar{\lambda}_M}C_0$ from \eqref{distance:partialE} and the fact $ \bar{\lambda}_M\|x\|^2 \ge \|x\|_M^2 \geq \ushort \lambda_M \|x\|^2$. Since $C_0,D_0$ is not related to $M$, we see $C_1=\frac{c^2C^2}{D}=\frac{c^2C_0^2}{ D_0} \frac{\bar \lambda_M}{\ushort{\lambda}_M}$ would increase when the condition number of $M$ increases. Thus the upper bound of the convergence rate $ \sqrt{1-\frac{2}{2C_1+3}}$ is decreased when the condition number of $M$ increases.
 \qed
\end{proof}

\subsection{Preconditioners and Preconditioned DCA$_e$}
Let's first consider the convex subdifferentials $\partial P_2^I$ or $\partial P_2^A$ by the following lemma for more general case.
\begin{lemma}
	The subdifferential of the convex function $p(x):=\max(|Kx|^2, \tau)/2$ is as follows
	\begin{equation}\label{eq:sub:deri:max:1}
	\left\{K^* \chi^s_{K,\tau} K x\ | \ s\in[0,1]\right\} = \partial_{x}(\frac{1}{2}\max(|Kx|^2, \tau)),
 	\end{equation}
	where the constant $\tau >0$ and $\chi^s_{K, \tau}$ is the generalized Clarke derivatives of $\max(\cdot, 1.0)$,
	\begin{equation}\label{eq:newton:deri:max}
	\chi^s_{K,\tau} = \begin{cases}
	1, \quad & |K x | /\sqrt{\tau} >1.0, \\
	s, \quad & |K x | /\sqrt{\tau}=1.0, \  s\in [0,1], \\ 
	0, \quad & |K x | / \sqrt{\tau} <1.0.
	\end{cases}
	\end{equation}
	Furthermore, we have
	\begin{equation}\label{eq:multi:sub:max}
	\partial \left(  \sum_{i=1}^l \frac{\mu_i}{2}\max(|K_ix|^2, \tau_i) \right)=\left\{ \sum_{i=1}^l\mu_i K_i^* \chi^{s^{i}}_{K_i,\tau_i}K_ix: \ \  s^i\in[0,1], \ \ i=1, \cdots, l\right\}.
	\end{equation} 
	Henceforth, we choose $s^i\equiv1$, $i=1, \cdots, l$ throughout this paper.
\end{lemma}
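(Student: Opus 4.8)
The plan is to recognize $p$ as a composition: writing $q(v):=\tfrac12\max(|v|_2^2,\tau)$ on the range space of $K$, we have $p(u)=q(Ku)$. Since $q$ is real-valued and continuous on the whole (finite-dimensional) space, the chain rule for convex subdifferentials applies without any constraint qualification, so $\partial p(u)=K^*\partial q(Ku)$, and the whole lemma reduces to computing $\partial q(v)$ and then invoking the sum rule.

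To compute $\partial q(v)$, I would write $q=\max(g_1,g_2)$ with $g_1(v)=\tfrac12|v|_2^2$, hence $\partial g_1(v)=\{v\}$, and $g_2\equiv\tfrac\tau2$, hence $\partial g_2(v)=\{0\}$. The classical formula for the subdifferential of a pointwise maximum of finitely many continuous convex functions gives $\partial q(v)=\operatorname{conv}\bigl(\bigcup_{i\in I(v)}\partial g_i(v)\bigr)$, where $I(v)=\{i: g_i(v)=q(v)\}$ is the active set. Splitting into the three cases: when $|v|_2/\sqrt\tau>1$ we get $I(v)=\{1\}$ and $\partial q(v)=\{v\}$; when $|v|_2/\sqrt\tau<1$ we get $I(v)=\{2\}$ and $\partial q(v)=\{0\}$; and when $|v|_2/\sqrt\tau=1$ we get $I(v)=\{1,2\}$ and $\partial q(v)=\operatorname{conv}\{0,v\}=\{sv:s\in[0,1]\}$. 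In every case $\partial q(v)=\{\chi^s_{K,\tau}v:s\in[0,1]\}$ with $\chi^s_{K,\tau}$ exactly the generalized Clarke derivative in \eqref{eq:newton:deri:max}; applying $K^*$ and setting $v=Ku$ yields \eqref{eq:sub:deri:max:1}.

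For \eqref{eq:multi:sub:max}, I would use the Moreau--Rockafellar sum rule: each summand $u\mapsto\tfrac{\mu_i}2\max(|K_iu|_2^2,\tau_i)$ is a finite-valued convex function, so $\partial\bigl(\sum_{i=1}^l\tfrac{\mu_i}2\max(|K_iu|_2^2,\tau_i)\bigr)=\sum_{i=1}^l\mu_i\,\partial\bigl(\tfrac12\max(|K_iu|_2^2,\tau_i)\bigr)$, and substituting the formula just derived for each term produces the Minkowski sum $\bigl\{\sum_{i=1}^l\mu_i K_i^*\chi^{s^i}_{K_i,\tau_i}K_iu:\ s^i\in[0,1],\ i=1,\cdots,l\bigr\}$.

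The proof is essentially bookkeeping with three standard convex-analysis facts, so I do not expect a genuine obstacle. The two points deserving care are: first, making explicit that the chain rule $\partial(q\circ K)=K^*\partial q(K\cdot)$ and the sum rule hold unconditionally here because every function involved is finite everywhere on $\mathbb{R}^n$; and second, handling the boundary case $|K_iu|_2=\sqrt{\tau_i}$ correctly --- there the subdifferential is the \emph{full} segment $\operatorname{conv}\{0,K_iu\}$, not merely its endpoints, which is exactly why $\chi^s_{K,\tau}$ must be allowed to take all values $s\in[0,1]$ at the kink.
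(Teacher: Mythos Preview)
Your proof is correct, and the overall structure matches the paper's: both establish the single-term formula first and then invoke the Moreau--Rockafellar sum rule (the paper cites Theorem~23.8 in \cite{Roc}) using that each summand has full domain. The difference lies in how the single-term subdifferential is obtained. You stay entirely within convex analysis: factor $p=q\circ K$, apply the convex chain rule $\partial p(u)=K^*\partial q(Ku)$, and compute $\partial q$ via the active-set formula for the pointwise maximum of finitely many continuous convex functions. The paper instead works directly with $p$ as a $PC^1$ function, invokes the formula $\partial^{\mathrm{Clarke}} p(u)=\operatorname{co}\{\nabla p_1(u),\nabla p_2(u)\}$ for the Clarke subdifferential of a continuous selection (Proposition~4.3.1 in \cite{Sch}), and then appeals to the coincidence of Clarke and convex subdifferentials for convex functions. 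Your route is more self-contained and avoids the detour through nonsmooth (Clarke) analysis; the paper's route has the minor advantage of not needing to state the chain rule separately, but at the cost of importing the $PC^1$ machinery.
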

\begin{proof}
	We mainly need to consider \eqref{eq:newton:deri:max}. Since for each $p_i(x):=\frac{\mu_i}{2}\max(|K_i x|^2, \tau_i)$, $i=1, \cdots, l$, $\dom p_i = X$ which is the whole domain, then by \cite{Roc} (Theorem 23.8),  we have
	\[
	\partial (\sum_{i=1}^l p_i(x)) = \sum_{i=1}^l \partial p_i(x).
	\] 
	Let's consider the Clarke's generalized subdifferential of $p(x)$. Denote $p_1(x) = \frac{1}{2}|Kx|^2$ and $p_2(x)=0$. It can be seen  that $p(x)$ is a $PC^1$ function \cite{Sch}. It can be easily checked that while $|Kx|> \sqrt{\tau}$, 
	\begin{align*}
	\langle \nabla_x (\frac{1}{2}|Kx|^2), y\rangle = \langle Kx, Ky \rangle,
	\end{align*}
	where the inner product above is understood in the usual vector inner product such as $a^Tb$.
	We thus have $(\nabla_x p_1)(y) = 	\chi^s_{K,\tau}\langle Kx, K y \rangle $ with $s=1$ for $|Kx|> \sqrt{\tau}$.  $\nabla_x p_2(x)=0$ follows easily. We thus conclude that \cite{Sch} (Proposition 4.3.1)
	\[
	\partial_x p(x) = \text{co}\{\nabla_x p_1(x), \nabla_x p_2(x)\},
	\]
	{where the notation  ``$\text{co}$" denotes}  the convex hull of the corresponding set \cite{CL}.
	Since for convex functions, the Clarke generalized subdifferential concides with their convex subdifferential \cite{CL} (Proposition 2.2.7), we have \eqref{eq:sub:deri:max:1}. 
	\qed
\end{proof}
Now we turn to the preconditioners for image denoising. According to Lemma \ref{lem:feaible:percon}, we call a preconditioner $M_p$ {\it feasible} for $T$ if and only if 
\[
M_p \geq T= L_0\bds{I} - \mu \Delta,
\]
where $L_0$ is the same as in Lemma \ref{lem:feaible:percon}. 
For operators of type $T = \alpha \bds{I} - \beta \Delta$ for $\alpha,\beta > 0$
where $\Delta = \Div \nabla$ can be interpreted as a discrete
Laplace operator with homogeneous Neumann boundary
conditions \cite{BS1, BS2}. In other words: solving $Tx = b$ correspond to
a discrete version of the boundary value problem
\begin{equation}\label{neumann:boudary}
\begin{cases}
\alpha \bds{x} - \beta \Delta \bds{x} = b,\\
\frac{\partial \bds{x} }{\partial \nu}|_{\partial \Omega} =  0.
\end{cases}
\end{equation}
Besides Remark \ref{rem:pre:gs}, here are some examples from the classical iterative methods for linear systems.
\begin{example}\mbox{}
	\label{ex:standard-precon}
	\begin{itemize}
		\item
		Obviously, $M_p = T$ with $L_0=L$ is a feasible preconditioner for $T$ in \eqref{eq:pre:iter:pre}.
		This choice reproduces
		the original proximal DCA with $M=L\bds{I}$ without preconditioners. 
		\item
		The choice $M_p = c \bds{I}$ with
		$c \geq L + \mu \|\nabla\|^2$ also yields a feasible
		preconditioner. This is corresponding to the Richardson method, where the update for $x^{k+1}$ can be seen as an
		explicit step.
	\end{itemize}
\end{example}
We employ the efficient symmetric Red-Black Gauss-Seidel (SRBGS) iterations as the preconditioner \cite{BS1,BS2}.  Of course, several steps of this preconditioner can also be
performed; see the following Proposition \ref{multiple:precon}. Furthermore, we denote
the $n$-fold application of the symmetric Red-Black to the
initial guess $x$ and right-hand side $b$ by \cite{BS1,BS2}
\begin{equation}
\label{eq:symmetric-red-black-gauss-seidel}
\SRBGS_{\alpha,\beta}^n(x, b) = (\bds{I} + M_p^{-1}(\mathbf{1}_b - T))^nx
\end{equation}
making it again explicit that $M_p$ and $T$ depend on $\alpha$ and $\beta$.
\begin{proposition}[\cite{BS1}]\label{multiple:precon}
	Let $M_p$ be a feasible preconditioner for $T$ and $n \geq 1$. Then,
	applying the preconditioner $n$ times, i.e.,
	\[
	\left\{
	\begin{aligned}
	x^{k+(i+1)/n} &= x^{k+i/n} + M_p^{-1}(b^k - Tx^{k+i/n}) \\
	i &= 0,\ldots,n-1
	\end{aligned}\right.
	\]
	corresponds to $x^{k+1} = x^k + M_{p,n}^{-1}(b^k - Tx^k)$ where
	$M_{p,n}$ is a feasible preconditioner.
\end{proposition}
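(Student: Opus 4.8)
The plan is to verify first that the $n$-fold application of $M_p$ is itself an iteration of the claimed affine form, and then to check that the operator it defines is self-adjoint, positive definite, and dominates $T$. \textbf{Step 1 (the composite step has the stated form).} Set $G := I - M_p^{-1}T$, so one preconditioned step reads $x^{+} = Gx + M_p^{-1}b^{k}$. Unrolling $n$ steps gives $x^{k+1} = G^{n}x^{k} + \big(\sum_{j=0}^{n-1}G^{j}\big)M_p^{-1}b^{k}$. I would then \emph{define} $M_{p,n}^{-1} := \big(\sum_{j=0}^{n-1}G^{j}\big)M_p^{-1}$ and check consistency with the target form $x^{k+1} = (I - M_{p,n}^{-1}T)x^{k} + M_{p,n}^{-1}b^{k}$: the $b^{k}$–coefficients agree by definition, and the $x^{k}$–coefficients agree because the telescoping identity $\big(\sum_{j=0}^{n-1}G^{j}\big)(I-G)=I-G^{n}$, together with $I-G = M_p^{-1}T$, yields $M_{p,n}^{-1}T = I-G^{n}$. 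Since $M_p$ and $T$ are self-adjoint, $G^{j}M_p^{-1}=M_p^{-1}(G^{*})^{j}$ with $G^{*}=I-TM_p^{-1}$, which already shows $M_{p,n}^{-1}$ is self-adjoint.

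\textbf{Step 2 (simultaneous diagonalization).} Here feasibility of $M_p$ is used: $T = L_0I-\mu\Delta$ is positive definite (with $L_0>0$), and $M_p\geq T$ forces $M_p$ to be self-adjoint positive definite, so $S:=M_p^{1/2}$ exists. With $\tilde T := S^{-1}TS^{-1}$, the relation $M_p\geq T$ becomes $\tilde T \leq I$, and positive definiteness of $T$ gives positive definiteness of $\tilde T$, so all eigenvalues of $\tilde T$ lie in $(0,1]$. Writing $\tilde G := I-\tilde T = S\,G\,S^{-1}$ (self-adjoint, eigenvalues in $[0,1)$) and conjugating the formula of Step 1, $M_{p,n}^{-1} = S^{-1}\big(\sum_{j=0}^{n-1}\tilde G^{\,j}\big)S^{-1}$, hence $M_{p,n} = S\big(\sum_{j=0}^{n-1}\tilde G^{\,j}\big)^{-1}S$. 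Because $\tilde G\geq 0$ one has $\sum_{j=0}^{n-1}\tilde G^{\,j}\geq I$, which is positive definite, so $M_{p,n}$ is well-defined, self-adjoint and positive definite.

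\textbf{Step 3 (feasibility $M_{p,n}\geq T$).} Conjugating by $S^{-1}$, the inequality $M_{p,n}\geq T$ is equivalent to $\big(\sum_{j=0}^{n-1}\tilde G^{\,j}\big)^{-1}\geq \tilde T$. Using the telescoping identity once more, $\sum_{j=0}^{n-1}\tilde G^{\,j} = (I-\tilde G^{\,n})\tilde T^{-1}$, so the claim reduces to $\tilde T\big[(I-\tilde G^{\,n})^{-1}-I\big]\geq 0$. Now $\tilde T$ and $(I-\tilde G^{\,n})^{-1}-I$ are commuting self-adjoint operators — both are functions of the single operator $\tilde T$ — so it suffices to check the scalar inequality on their common spectrum: for an eigenvalue $t\in(0,1]$ of $\tilde T$ (whence $1-t\in[0,1)$), one needs $t\big(\tfrac{1}{1-(1-t)^{n}}-1\big)\geq 0$, which holds because $t>0$ and $1-(1-t)^{n}\in(0,1]$. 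A concrete check for $n=2$ is the identity $M_{p,2}=M_p(2M_p-T)^{-1}M_p$ (with $2M_p-T\geq M_p$, which is positive definite), whose feasibility reduces after the same conjugation to $(t-1)^{2}\geq 0$; since all the $M_{p,n}$ commute, the general case also follows by induction.

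\textbf{Expected obstacle.} The only subtle point is passing from the operator inequality $M_{p,n}\geq T$ to a one-variable inequality: one must notice that conjugation by $M_p^{1/2}$ renders every operator involved a function of the single self-adjoint operator $\tilde T$, so that all the order relations collapse to monotonicity in one real variable; everything else is bookkeeping of the telescoping sum and of adjoints.
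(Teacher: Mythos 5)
Your proof is correct: the paper itself gives no proof of this proposition (it is imported from \cite{BS1}), and your argument---unroll the iteration to read off $M_{p,n}^{-1}=\bigl(\sum_{j=0}^{n-1}G^{j}\bigr)M_p^{-1}$, verify self-adjointness, and reduce feasibility $M_{p,n}\geq T$ to the scalar fact $1-(1-t)^{n}\in(0,1]$ for $t\in(0,1]$ after conjugating by $M_p^{1/2}$---is essentially the standard one from that reference. One small caveat: the closing aside that ``the general case also follows by induction'' from the $n=2$ identity is not justified as stated (self-composing the two-step preconditioner only reaches powers of two, and composing preconditioners with different step counts mixes distinct operators), but this is harmless because your Step~3 already handles arbitrary $n$ directly.
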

It is proved in \cite{BS1} that $ M_{p,n} \geq T$. We thus conclude that the corresponding metric in the proximal term in \eqref{eq:proxi:y} $M_{n}$ is positive definite, since $ M_{n} = M_{p,n}+ \mu \Delta  \geq T + \mu \Delta \geq L_0\bds{I} $. {Proposition \ref{multiple:precon} provides great flexibility for choosing how many inner preconditioned iterations for the linear subproblems. }

Remembering $\nabla x = [\nabla_1 x, \nabla_2 x]^T$ and $|\nabla x|^2 = |\nabla_1 x|^2 + |\nabla_2 x|^2$, let's denote
\[
\chi_{x}=\begin{cases}
1, \quad |\nabla x| \geq \sqrt{\frac{\lambda}{\mu}}, \\
0, \quad |\nabla x| < \sqrt{\frac{\lambda}{\mu}},
\end{cases}
\
\chi_{x,1}=\begin{cases}
1, \quad |\nabla_1 x| \geq \sqrt{\frac{\lambda}{\mu}}, \\
0, \quad |\nabla_1 x| < \sqrt{\frac{\lambda}{\mu}},
\end{cases}
\
\chi_{x,2}=\begin{cases}
1, \quad |\nabla_2 x| \geq \sqrt{\frac{\lambda}{\mu}}, \\
0, \quad |\nabla_2 x| < \sqrt{\frac{\lambda}{\mu}}.
\end{cases}
\] 
With these preparations, we give the following Algorithm \ref{alg:dca:deno:scale}.
\begin{algorithm}
	\caption{preDCA$_{e}$ for image denoising or segmentation of the truncated model \eqref{equation:iso} or \eqref{equation:ani} with $A=\bds{I}$\label{alg:dca:deno:scale}}
	\begin{algorithmic}
		\STATE { $x^{0} \in \operatorname{dom} P_{1}$, $\{\beta_{t}\}  \subseteq [0,1)$, \text { with } $\sup _{t} \beta_{t}<1$. Choose $L_0\geq L$ and set  $x^{-1}=x^{0}$. \\
			Iterate the following steps for $t=0, 1, \cdots$, 
			%
			\begin{align}
			\xi^t& = \begin{cases}
			\nabla^*\chi_{x^t}\nabla x^t, \quad \text{for the isotropic case,} \\ 
		(\nabla_1^*\chi_{x^t,1}\nabla_1 + \nabla_2^*\chi_{x^t,2}\nabla_2) x^t, \quad \text{for the anisotropic case,} 
			\end{cases}\\
			y^{t}&= x^{t}+\beta_{t}(x^{t}-x^{t-1}), \notag
			\\
			b^t &= (L_0-I)y^t +\nabla f|_{y=y^t} + \xi^t,  \\
			x^{t+1} &= \SRBGS_{\alpha,\beta}^n(y^t, b^t), \quad T: = (L_0\bds{I} -\mu\Delta).\label{eq:pre:iter:pre} 
			\end{align}}
		\STATE{Unless some stopping criterion is satisfied, stop}
	\end{algorithmic}
\end{algorithm} 
For color images, denoting the color image as $\textbf{x}=(\bds{x}_1,\bds{x}_2, \bds{x}_3)^{T}$, the truncated quadratic regularization models are as follows 
\begin{align}
&\argmin_{\bf x} \mathcal{F}({\bf x}) = \frac{\|{\bf A}{\bf x} -{\bf x}_0\|_2^2}{2} +
\sum_{i=1}^m \sum_{j=1}^n\frac{\mu}{2}\min(|(\nabla \textbf{x})_{i,j}|^2,  \frac{\lambda}{\mu }),\ \ \text{isotropic case}  \label{eq:vector:color} \\
&\argmin_{\bf x} \mathcal{F}({\bf x}) = \frac{\|{\bf A}{\bf x} -{\bf x}_0\|_2^2}{2} +\sum_{i=1}^m \sum_{j=1}^n \sum_{k=1}^3\frac{\mu}{2}\min(|(\nabla \bds{x}_k)_{i,j}|^2, \frac{\lambda}{\mu }), \ \ i=1,2,3, \ \ \text{anisotropic case}  \notag
\end{align}
where $|\nabla {\bf  x}|^2=\sum_{i=1}^3 |\nabla \bds{x}_i|^2$ and $|\nabla \bds{x}_i|^2 = |\nabla_1 \bds{x}_i|^2 + |\nabla_2 \bds{x}_i|^2$, $i=1,2,3$ and $\bf A$ is a linear and bounded operator. It can be seen that the functional of the isotropic case in \eqref{eq:vector:color} is still within the form of Theorem \ref{thm:KL}. For the anisotropic case, denoting ${\bf K}_1 = \text{Diag}[\nabla, 0,0]$, ${\bf K}_2 = \text{Diag}[0, \nabla, 0]$ and ${\bf K}_3 = \text{Diag}[0,0, \nabla]$, then the functional
\[
\sum_{i=1}^3\frac{\mu}{2}\min(|\nabla \bds{x}_i|^2, \frac{\lambda}{\mu }) = \sum_{i=1}^3\frac{\mu}{2}\min(|{\bf K_i}{\bf x}|^2, \frac{\lambda}{\mu }),
\]
is still of the form in Theorem \ref{thm:KL} before the summation over all the pixels as in \eqref{eq:vector:color}. The global convergence and local linear convergence rate also follow. The corresponding algorithm is completely similar to Algorithm \ref{alg:dca:deno:scale} and we omit here.
\section{Numerics}\label{sec:num}
In this section, we will  consider the image denoising and image segmentation problem. 
All experiments are performed
in Matlab 2019a on a 64-bit PC with an Inter(R) Core(TM) i5-6300HQ CPU(2.30Hz) and 12
GB of RAM. 
\subsection{Image Denoising}
We will compare with the well-known total variation (TV) regularization
\begin{equation}\label{eq:TV}
\argmin_{\bds{x} \in X} F(\bds{x}) =  \frac{1}{2}\|A\bds{x}-f\|_{2}^2+ \alpha \|\nabla \bds{x}\|_1, 
\end{equation}
For image denoising, $A=\bds{I}$. The first-order primal-dual algorithm is employed for the minimization problem \eqref{eq:TV} \cite{CP}. We will also compare with the appealing truncated regularization framework developed in  \cite{WLW} including the truncated TV (shorten as TR-TV), truncated logarithmic regularization (shorten as TR-LN), the truncated quadratic regularization (shorten as TR-$l_2$), {and the weighted difference of anisotropic and isotropic total variation mode (Ani-iso-DCA) \cite{LZOX}}. The TR-$l_2$ models are the same as \eqref{equation:iso} and \eqref{equation:ani}. As in \cite{WLW}, ADMM (Alternating direction method of multipliers) type method is employed to solve the TR-TV, TR-$l_2$, and TR-LN.  It is already shown TR-TV and TR-LN can give promising PSNR especially for isotropic cases \cite{WLW}. Here we focus on the anisotropic cases. 
{The extrapolation parameter $\{ \beta_t\}$ is chosen according to \cite{WCP} for the proposed preconditioned DCA, where 
\begin{equation}\label{eq:betat}
\beta_t = (\theta_{t-1}-1)/{\theta_t}, \quad \theta_t = (1+\sqrt{4\theta_{t-1}^2 +1})/{2}, \quad \theta_{-1}=\theta_0=1.
\end{equation}
Restarting strategy is necessary for satisfying the condition $\{\beta_t\} \in [0,1)$ and $\sup_{t}\beta_t <1$.  The adaptive $\beta_t$ in \eqref{eq:betat} can bring out certain acceleration experimentally.}
 With appropriate parameters of $\lambda$ and $\beta$, it can be seen that the truncated regularization \eqref{equation:iso} and \eqref{equation:ani} can obtain high quality denoised images; see Figure \ref{lena:ani:tv:dca} for the anisotropic truncated quadratic case \eqref{equation:ani} and Figure \ref{monarch:itq:denoise} for the isotropic truncated quadratic case \eqref{equation:iso}. Especially, there is no staircasing effect for \eqref{equation:iso} or \eqref{equation:ani} as the total variation. From Figure \ref{lena:psnr:iter:time}, it can be seen that the \eqref{equation:ani} can get better PSNR with less iterations and less computation time compared with the anisotropic TV.
 
From Table \ref{tab:ani:atq:atv} which is focused on the anisotropic cases,  it can be seen that both  \eqref{equation:ani} and TR-LN are very competitive with high PSNR values for most cases compared with TV. The TR-TV can get higher SSIM for some cases. Although the same model with the same parameters $\lambda$ and $\mu$ for \eqref{equation:ani}, our proposed preconditioned DCA can get higher PSNR and SSIM compared with ADMM used in \cite{WLW}. The preconditioned DCA may exploit more potential of the model  \eqref{equation:ani} compared to the ADMM employed in \cite{WLW}. {For the comparison with computational efficiency, Figure \ref{DCA_ADMM_comparison} tells that while the proposed preconditioned DCA can decrease the energy quickly and achieve a better PSNR value much fast compared with both iteration number and iteration time, the ADMM employed in \cite{WLW} can obtain a lower energy with enough iterations.} 
{Tables \ref{tab:ani:atq:atv} also shows that the Ani-iso-DCA \cite{LZOX} is also competitive 
	compared to TV. However, it is not as promising as ATQ and TR-LN models.}

For the global convergence with preconditioners, Figure \ref{global:convergence} tells that the proposed preconditioned DCA is faster than DCA with solving the linear subproblem very accurately by the DCT (Discrete cosine transform) compared both with iteration number and computational time. This is surprising that the proposed preconditioned DCA not only can save the computational efforts but also can improve the performance of DCA with more efficient  algorithms. For the local convergence rate, Figure \ref{local:rate}(a) tells that for the whole nonlinear DCA iterations,  for the linear system appeared, the SRBGS preconditioner is very efficient compared to solving the linear subproblems very accurately with DCT. The proposed preconditioned DCA can get faster local linear convergence rate with less computations compared to the original proximal DCA with highly accurate DCT solver. Theoretically, the proposed preconditioned DCA not only provides an efficient inexact framework with any finite time preconditioned iterations for DCA with global convergence guarantee, but also can potentially give a faster local convergence rate compared to the original DCA with a very accurate solver. 

{Figure \ref{fig:seg} shows that the proposed preconditioned DCA can be used for image segmentation with various examples, which is not surprising since the truncated quadratic model is widely studied and used for image segmentation problems \cite{BVZ, BZ}.  Figure \ref{fig:seg} also shows that the truncated quadratic model can give better segmentation than TV.}

\subsection{Image deblurring}
For image deblurring, by Proposition \ref{pro:feaible:percon:deblur}, we just need to design a preconditioner for the discrete version of the  following equation 
	\begin{equation}\label{eq:deblur:neu}
	T_n \bds{x}  = b^t, \quad T_n: = (L_0\bds{I} +A^*A - \mu \Delta_n ), \quad \frac{\partial \bds{x} }{ \partial \nu} = 0,
	\end{equation}
	where we use $\Delta_n $ to denote the Laplacian operator with emphasis on the Neumann boudary condition. 
	The above equation is usually solved directly by FFT (fast fourier transform). However, considering the FFT is based on the periodic boundary condition which does not match the Neumann boundary condition, it can be circumvented through preconditioning technique \cite{BS2}, 
	\begin{equation}\label{eq:deblur:peo}
	T_p x = b^t, \quad T_p: = (L_0\bds{I} +A^*A - \mu \Delta_p ), \quad x \ \  \text{with periodic boundary condition},
	\end{equation}
	where $\Delta_p $ denotes the Laplacian operator with emphasis on the  periodic boundary condition. It is proved that $T_p \geq T_n$ \cite{BS2}. We can use $T_p$ as a preconditioner for $T_n$ as follows 
	\[
	x^{t+1} = y^t + T_p^{-1}(b^t - T_n y^t)=  T_p^{-1}(b^t + T_p y^t - T_n y^t) = T_p^{-1}(b^t -\mu(\Delta_p-\Delta_n)y^t).
	\]
	Since $T_p = A^*A - \mu \Delta_n + M $ by Proposition \ref{pro:feaible:percon:deblur}, we have the proximal metric $M = T_p - T_n + L_0 \bds{I}  \geq L_0 \bds{I}$. Denoting the periodic
	convolution kernel of $-\Delta$ is $\kappa_{\Delta}$ along with $\mathcal{F}$ and $\mathcal{F}^{-1}$ being the discrete Fourier and inverse Fourier transform \cite{BS2}, with these preparations, we now give our Algorithm \ref{alg:dca:deblur} for image deblurring.
	
	\begin{algorithm}
		\caption{preDCA$_{e}$ for image deblurring of the truncated model \eqref{equation:iso} or \eqref{equation:ani} with $Ax = x \ast \kappa $ and $A^*x = x \ast \kappa'  $ with $\kappa$ being the convolution kernel \label{alg:dca:deblur}}
		\begin{algorithmic}
			\STATE { $x^{0} \in \operatorname{dom} P_{1}$, $\{\beta_{t}\}  \subseteq [0,1)$, \text { with } $\sup _{t} \beta_{t}<1$. Choose $L_0> 0$ and set  $x^{-1}=x^{0}$. \\
				Iterate the following steps for $t=0, 1, \cdots$,
				%
				\begin{align}
				\xi^t& = \begin{cases}
				\nabla^*\chi_{x^t}\nabla x^t, \quad \text{for the isotropic case,} \\ 
				(\nabla_1^*\chi_{x^t,1}\nabla_1 + \nabla_2^*\chi_{x^t,2}\nabla_2) x^t, \quad \text{for the anisotropic case,} 
				\end{cases}\\
				y^{t}&= x^{t}+\beta_{t}(x^{t}-x^{t-1}), \notag
				\\
				b^t &= L_0y^t + x_0 \ast \kappa' + \xi^t,  \\
				x^{t+1} & \displaystyle = \mF^{-1}\Bigl(
				\frac{\mF( b^t) }
				{|\mF (\kappa)|^2 + \mu \mF (\kappa_\Delta) +L_0} \Bigr).
				\end{align}
				
			}
			\STATE{Unless some stopping criterion is satisfied, stop}
		\end{algorithmic}
	\end{algorithm}

	For image deblurring of anisotropic cases, we will compare with TV, i.e., $Au= u \ast \kappa $ in \eqref{eq:TV} with first-order primal-dual algorithm \cite{CP}, the  TR-TV and TR-$l_2$ models in \cite{WLW} with ADMM who can get stable PSNR during iterations. We also compared  with the DCA without preconditioning  by $x^{t+1} = T_p^{-1}(b^t)$ and we denote it as ATQ-Npre. In ATQ-Npre, the different boundary conditions of $T_p$ and $T_n$ are ignored, and FFT together with inverse FFT is directly applied to the Neumann boundary condition $T_n$. 
	
	Figure \ref{llama:itq:deburring} tells that we can get high quality deblurred images with \eqref{equation:ani} with our preconditioned DCA, i.e., Algorithm \ref{alg:dca:deblur} for degraded images blurred by motion filter or Gaussian filter. Here we choose $L_0=10^{-10}$ to approximate the original linear system \eqref{eq:linear:DCA:ori} of the standard DCA. 
	Table \ref{tab:ani:deblur} shows that \eqref{equation:ani} with the proposed algorithm can get competitive PSNR and SSIM. Our preconditioned DCA can still obtain better PSNR or SSIM compared the TR-$l_2$ by ADMM. Both Table \ref{tab:ani:deblur} and Figure \ref{local:rate}(b) shows that \eqref{equation:ani} with preconditioned DCA in Algorithm \ref{alg:dca:deblur} can get better PSNR, SSIM and lower energy compared to the DCA without preconditioning, i.e., ATQ-Npre. {The performance of Ani-iso-DCA \cite{LZOX} is similar to the denoising case, which is competitive compared to TV.}

	We also found that Algorithm \ref{alg:dca:deblur} with small $L_0$ can get much better PSNR and SSIM than Algorithm \ref{alg:pre:dca} for imaging deblurring where the $A^*A$ is put into the backward step. Since whose PSNR and SSIM are much lower according to our experience, we did not present the corresponding numerical results.

\begin{table}
	\centering
	\caption{Comparison for anisotropic image denoising models. The noisy images are as follows:	Lena1, Lena2 with size $512 \times 512$, Monarch1 and Monarch2 with size $768 \times 512$. The usual zero mean Gaussian white noise of variance $\sigma =0.1$ for Lena1 or Monarch1 and $\sigma=0.05$ for Lena2 or Monarch2. The  parameters for the corresponding models are as follows. For \eqref{equation:ani}, we choose {$\mu=3,\lambda=0.01$} for $\sigma=0.1$ cases and {$\mu=1.5,\lambda=0.005$} for $\sigma=0.05$ cases. For the anisotropic TV model, the regularization parameter $\alpha$ is chosen as the variance of the noise, i.e., $\alpha = \sigma$. For the truncated regularization, the  parameters of TR-TV are $\alpha=10,\beta=600,\tau=0.6$ for $\sigma=0.1$ cases and  $\alpha=40,\beta=6000,\tau=0.2$ for $\sigma=.05$ cases. 
		The parameters of TR-$l_2$  are {$\alpha=2/3,\beta=6000,\tau=0.0577$}  for $\sigma=0.1$ cases and {$\alpha=4/3,\beta=6000,\tau=0.0577$} for $\sigma=0.05$ cases. The  parameters of TR-LN are $\alpha=10,\beta=600,\tau=0.5,\theta=1$ for $\sigma=0.1$ cases and $\alpha=40,\beta=600,\tau=0.5,\theta=1$ for $\sigma=0.05$ cases. {The parameters of Ani-iso-DCA are $\mu = 5,\lambda=0.5$ for $\sigma=0.1$ cases and $\mu=15,\lambda=1$ for $\sigma=0.05$ cases.}}
	\label{tab:ani:atq:atv}
	\begin{tabular}{|c|c|c|c|c|c|c|c|c|}
		\hline
		\multirow{2}{*}{ }	& \multicolumn{2}{c|}{ Lena1} & \multicolumn{2}{c|}{Monarch1 }&\multicolumn{2}{c|}{Lena2  }& \multicolumn{2}{c|}{Monarch2} \\
		\hline 
		& PSNR & SSIM & PSNR & SSIM & PSNR & SSIM&PSNR &SSIM\\
		\hline
		ATQ model~&\bf 29.308& 0.784&\bf 29.620&0.836&32.380&\bf 0.859&33.203&0.898 \\
		\hline
		TV model~ & 29.227&0.800&29.143&0.873&31.850&0.854&32.613&\bf 0.919 \\
		\hline
		TR-TV~ & 29.250&\bf 0.801& 29.169&\bf 0.875&32.601&0.854&33.178&0.893\\
		\hline
		TR-$l_2$~& 29.079&0.741&27.851&0.768&31.950&0.832&30.975&0.853\\
		\hline
		TR-LN~&29.285&0.800&29.361&0.873&\bf 32.615&0.850&\bf 33.223&0.887\\
		\hline
		Ani-iso-DCA~&29.113&0.793&29.227&0.864&32.293&\bf 0.859&33.000&0.909\\
		\hline 
	\end{tabular}
\end{table}

\begin{figure}
	\begin{center}
		\subfloat[Noisy image with $\sigma=0.1$]
		{\includegraphics[width=4.5cm]{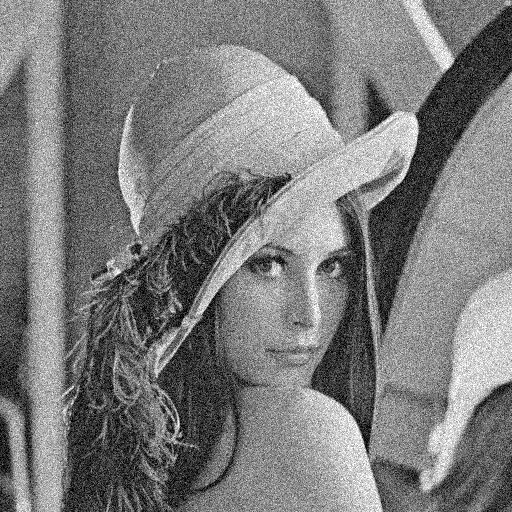}} \ \ 
		\subfloat[ATQ model with $\lambda=3,\mu=0.01$]
		{\includegraphics[width=4.5cm]{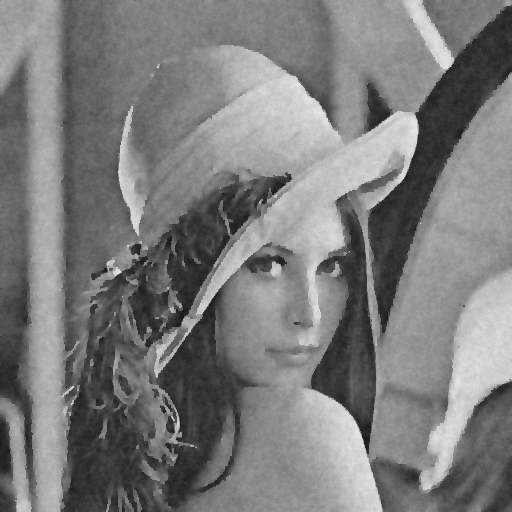}}\ \
		\subfloat[TV model, $\alpha=0.1$]
		{\includegraphics[width=4.5cm]{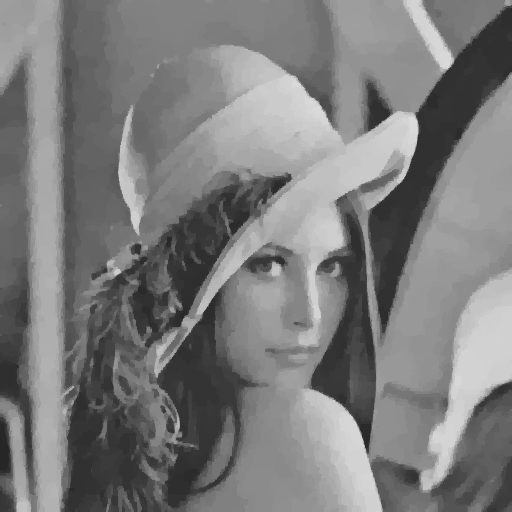}}\\
		\subfloat[Noisy image with $\sigma=0.05$]
		{\includegraphics[width=4.5cm]{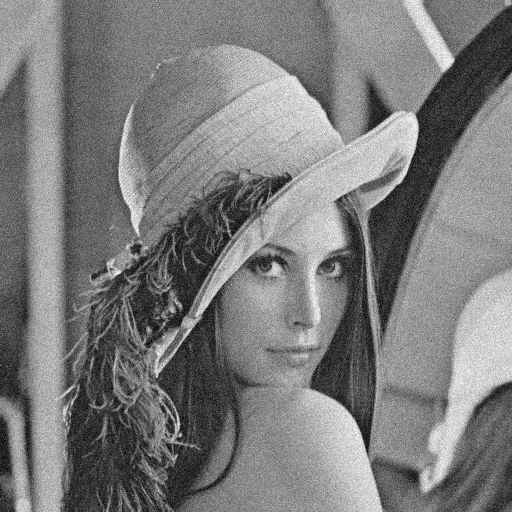}}\ \
		\subfloat[ATQ model with $\lambda=1.5,\mu=0.005$]
		{\includegraphics[width=4.5cm]{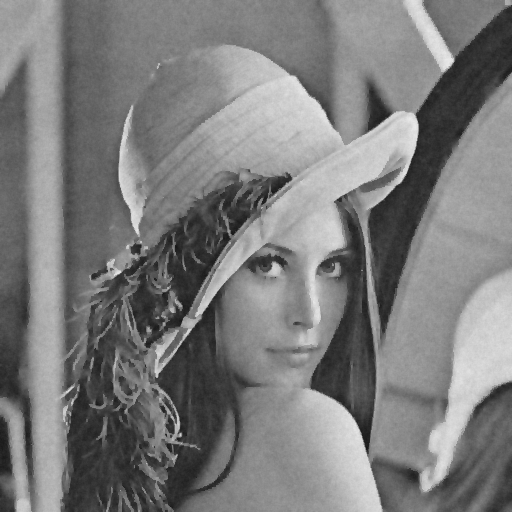}}\ \
		\subfloat[TV model, $\alpha=0.05$]
		{\includegraphics[width=4.5cm]{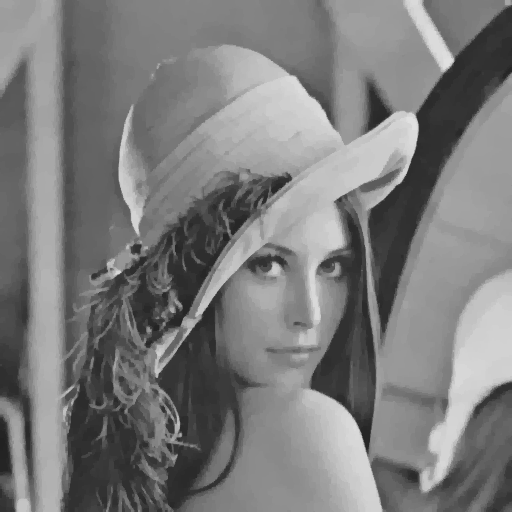}}
	\end{center}
	\caption{\scriptsize{Images (a) and (d) show the corresponding noisy {images} of the standard {Lena} image of size $512 \times 512$ corrupted by Gaussian noise of Gaussian variance $\sigma=0.1$ and $\sigma=0.05$, i.e., Lena1 and Lena2 image in Table \ref{tab:ani:atq:atv}. Images (b) and  (e) show the denoised images of (a) and (d) with ATQ by parameters {$\mu=3,\lambda=0.01$} and  {$\mu=1.5,\lambda=0.005$} correspondingly. Images (c) and (f) are denoised images of (a) and (d) by the anisotropic TV through the first-order primal-dual algorithm with the corresponding parameters $\alpha = 0.1$ and $\alpha=0.05$.}}
	\label{lena:ani:tv:dca}
\end{figure}
\begin{figure}
	\begin{center}
		\subfloat[Original image: Tucan]
		{\includegraphics[width=4.5cm]{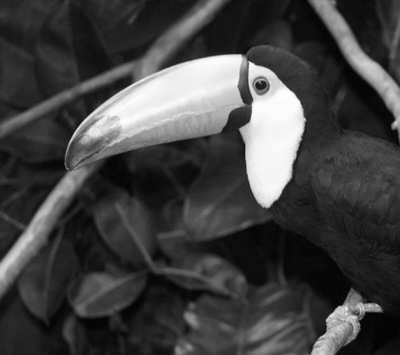}}\ \ 
		\subfloat[Noisy image with $\sigma=0.1$]
		{\includegraphics[width=4.5cm]{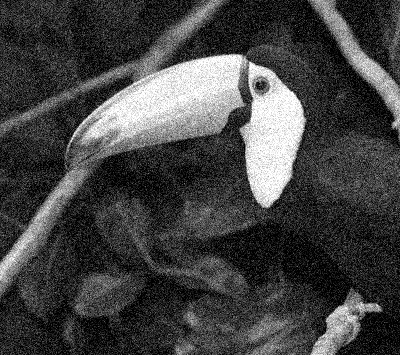}} \ \ 
		\subfloat[ITQ model with $\lambda=3,\mu=0.01$]
		{\includegraphics[width=4.5cm]{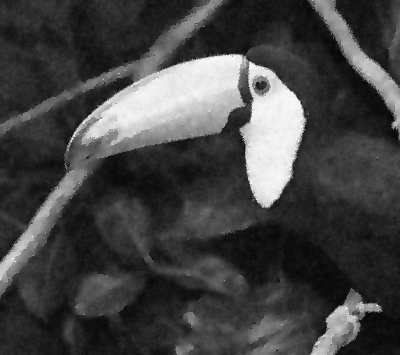}}\ \ 
	\end{center}
	\caption{\scriptsize{Images (a) shows the original $400 \times 355$ Tucan  image. Image (b) is a noisy image corrupted by 10\% Gaussian noise. Image} (c)  shows the denoised image by \eqref{equation:iso} with parameters  $\mu=3,\lambda=0.01$.} 
	\label{monarch:itq:denoise}
\end{figure}

\begin{figure}
	\begin{center}
		\subfloat[PSNR comparison with iteration number between ATQ and anisotropic TV]
		{\includegraphics[width=6.5cm]{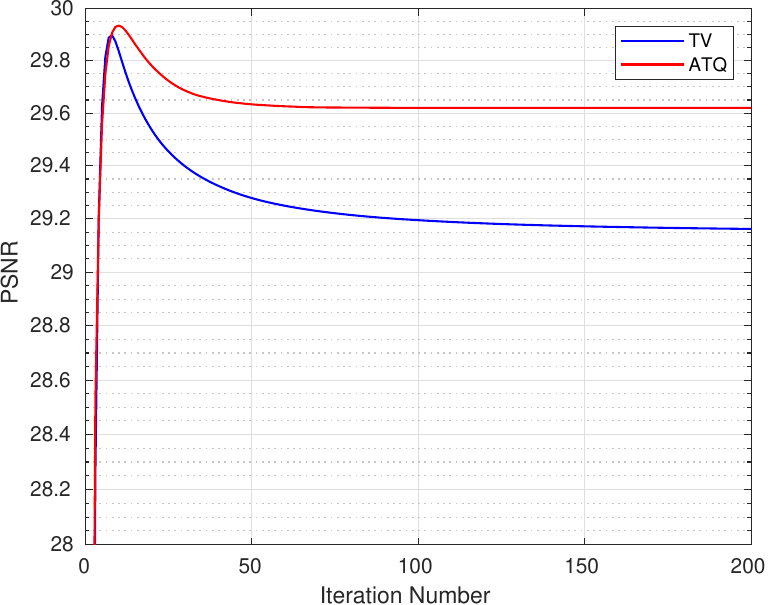}}\quad
		\subfloat[PSNR comparison with iteration time between ATQ and anisotropic TV]
		{\includegraphics[width=6.5cm]{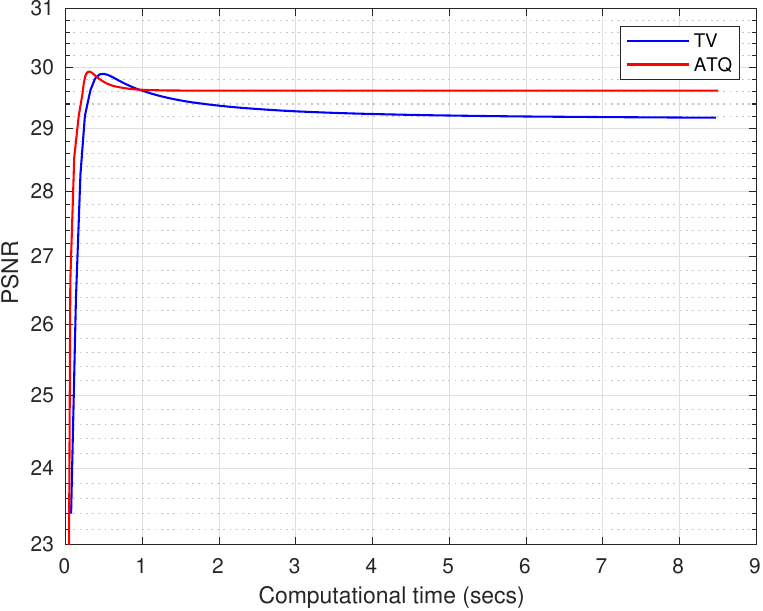}} 
	\end{center}
	\caption{\scriptsize{Figures (a) or (b) shows the PSNR comparisons with iteration number or computational time between \eqref{equation:ani} and the anisotropic TV. The computations are based on the Monarch image of size $768\times 512$. The  parameters of \eqref{equation:ani} are {$\mu=3,\lambda=0.01$} and the parameter of the anisotropic TV is $\alpha=0.1$.} }
	\label{lena:psnr:iter:time}
\end{figure}

\begin{figure}
	\begin{center}
		\subfloat[Comparison with iteration number between SRBGS preconditioner and the DCT slover]
		{\includegraphics[width=6.5cm]{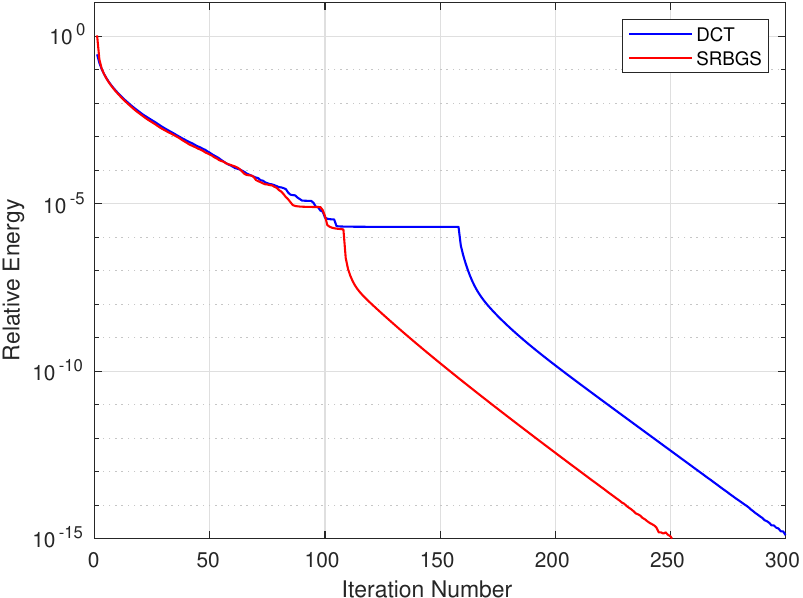}}\quad
		\subfloat[Comparison with iteration time between SRBGS preconditioner and the DCT slover]
		{\includegraphics[width=6.3cm]{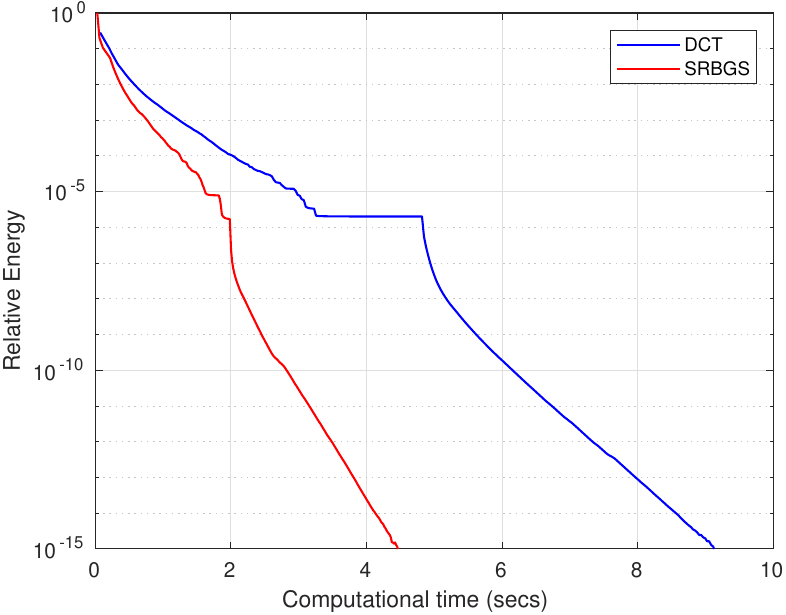}} 
	\end{center}
	\caption{\scriptsize{Figures (a) or (b) shows the comparison with iteration number or computational time between preconditioned DCA with 10 times symmetric Red-Black Gauss-Seidel (SRBGS) iterations  and the DCT solver. The DCT solver can be seen as an approximately exact solver without preconditioners, i.e., $M=L\bds{I}$. The computation is based on Monarch with size $768\times512$ for the model \eqref{equation:ani} with parameters $\mu=3,\lambda=0.01$. }}
	\label{global:convergence}
\end{figure}


\begin{figure}[!htbp]
	\centering
		\subfloat[The local linear convergence rate for denoising]
	{\includegraphics[width=0.45\textwidth]{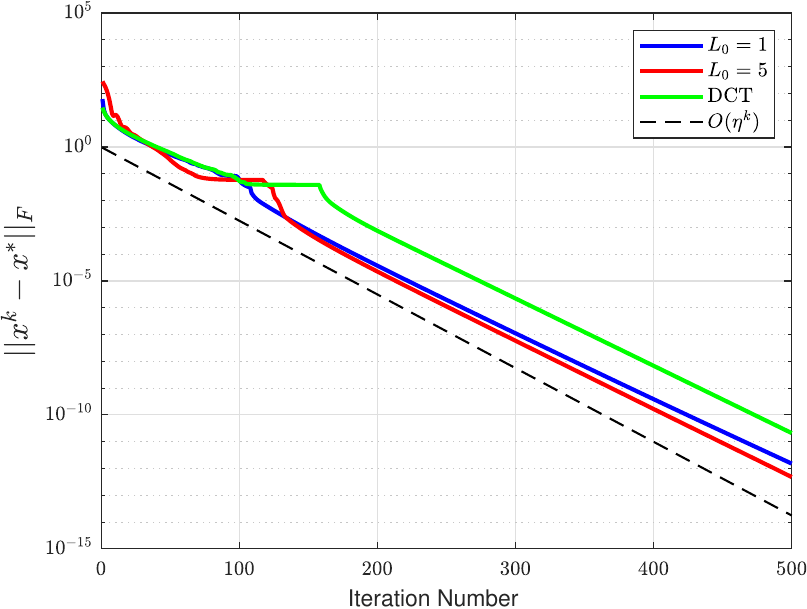}} \quad 
		\subfloat[Energy comparison for deblurring]
		{\includegraphics[width=0.45\textwidth]{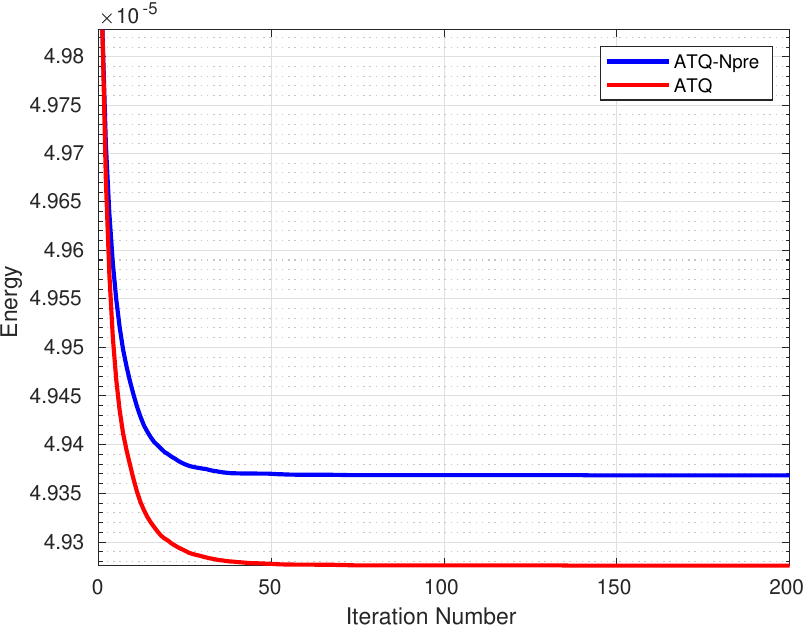}}
	\caption{Figure (a): The local linear convergence rate. The computation is by the model \eqref{equation:ani} for the Monarch image of size $768\times 512$ with parameters $\mu=3,\lambda=0.01$. $L_0$ is as in Lemma \ref{lem:feaible:percon} and DCT represents the case $M=LI$ without preconditoner and solving the corresponding linear equation with DCT solver. The preconditioned DCA for different $L_0$ are both with 10 times symmetric Red-Black Gauss-Seidel (SRBGS) iterations. $\eta \in (0,1)$ is a constant. Figure (b): The energy of precondition ATQ is lower that of ATQ without preconditioning (ATQ-Npre) for deblurring the Llama1 image with Motion filter blur and parameters $\mu = 0.01,\lambda=10^{-4}$ as in Table \ref{tab:ani:deblur}. }
	\label{local:rate}
\end{figure}

\begin{figure}

	\begin{center}
	\subfloat[Energy comparison with iteration number between ATQ and anisotropic ADMM]
	{\includegraphics[width=6.5cm]{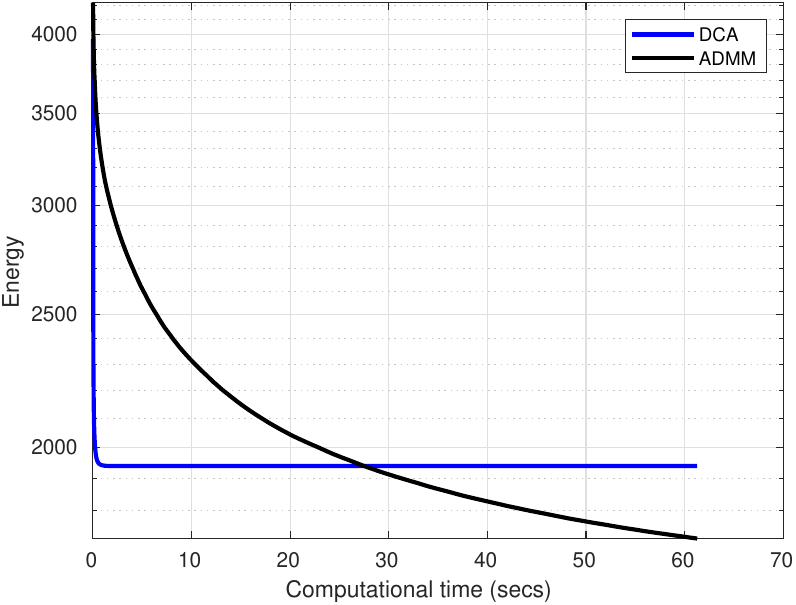}}\quad
	\subfloat[Energy comparison with iteration time between ATQ and anisotropic ADMM]
	{\includegraphics[width=6.5cm]{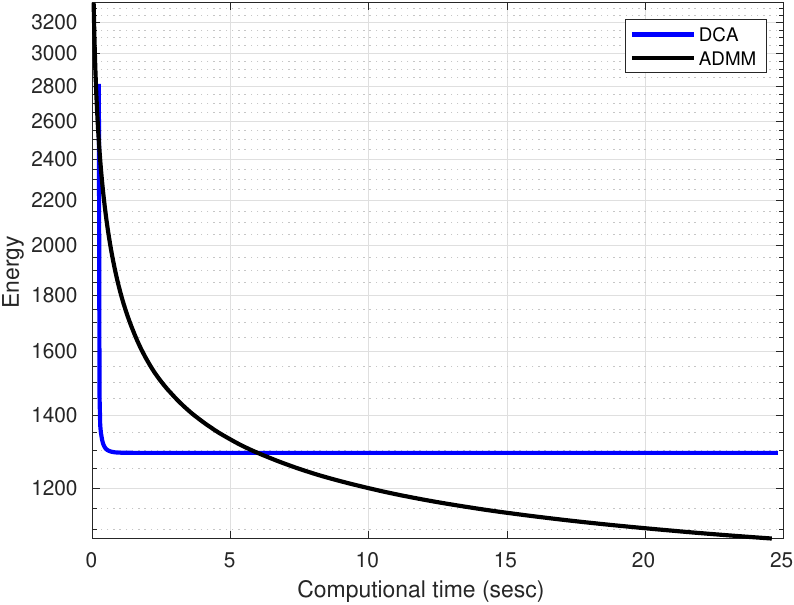}} \\
	\subfloat[PSNR comparison with iteration number between ATQ and anistropic ADMM]
	{\includegraphics[width=6.5cm]{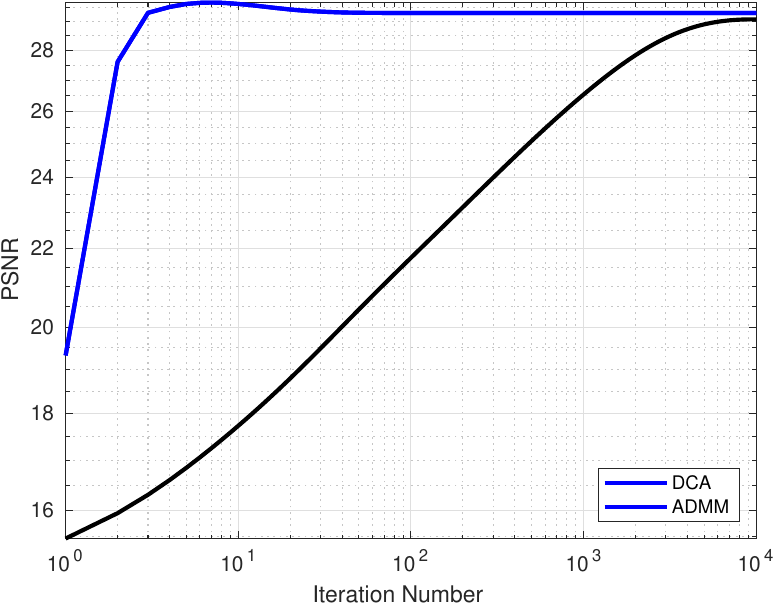}}\quad
	\subfloat[PSNR comparison with iteration time between ATQ and anistropic ADMM]
	{\includegraphics[width=6.5cm]{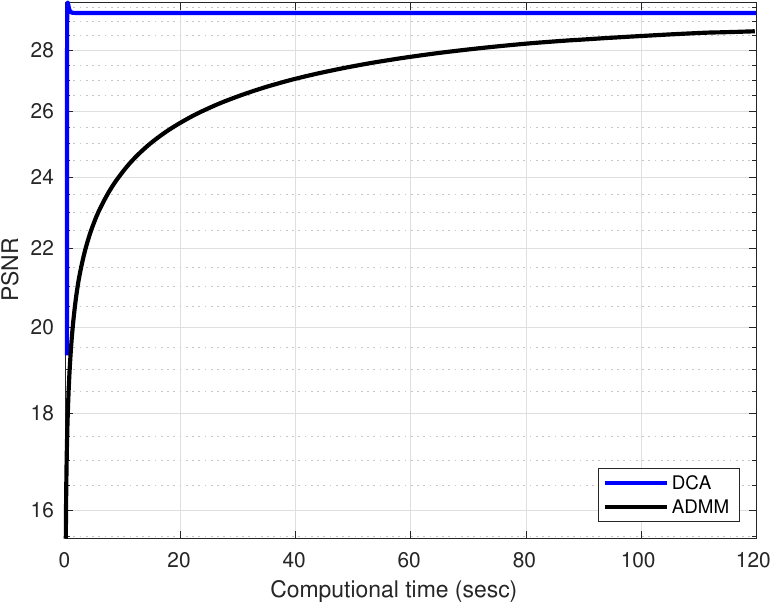}}
\end{center}
		
	\caption{\scriptsize{Figures (a) or (b) shows the energy comparisons with iteration number or computational time between \eqref{equation:ani} and the anisotropic ADMM for image denoising. Figures (c) or (d) shows the PSNR comparison with iteration number or computational time between \eqref{equation:ani} and the anisotropic ADMM for image denoise. The computations are based on the Lena image of size $512\times 512$ and the zeros mean Gaussian white noise of variance $\sigma=0.1$ are added to the image.  The  parameters of \eqref{equation:ani} are $\mu=3,\lambda=0.01$ and the parameters of the anisotropic ADMM is $\alpha=2/3,\beta=6000,\tau=0.0577$.} }
	\label{DCA_ADMM_comparison}
\end{figure}

\begin{figure}
	\begin{center}
		\subfloat[Original Shooter image]
		{\includegraphics[width=3cm]{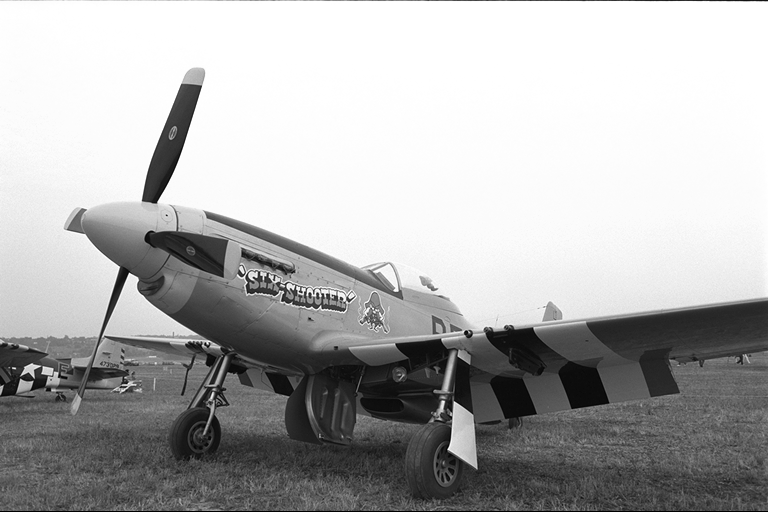}}\ \ 
		\subfloat[ITQ model with $\mu=100,\lambda=0.01$]
		{\includegraphics[width=3cm]{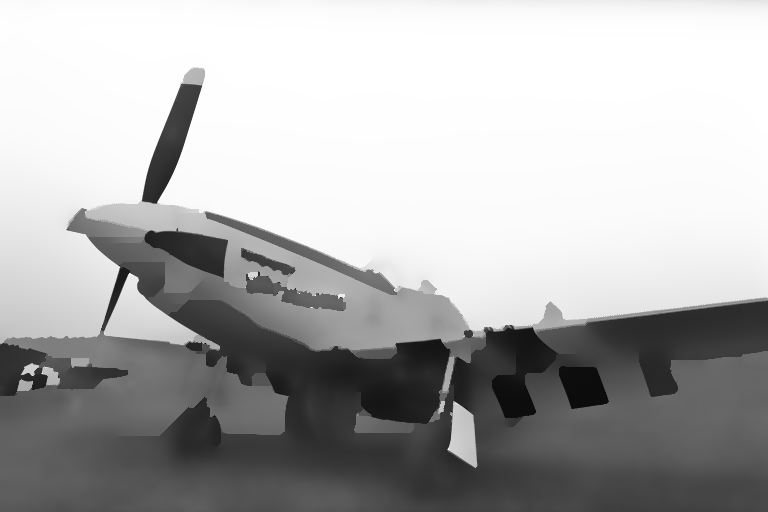}} \ \ 
		\subfloat[ATQ model with $\mu=100,\lambda=0.01$]
		{\includegraphics[width=3cm]{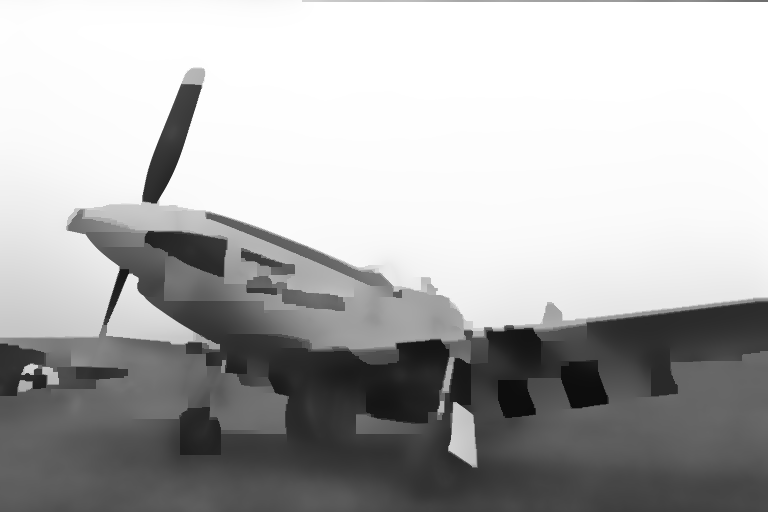}}\ \
		\subfloat[TV model with $\alpha=0.5$]
		{\includegraphics[width=3cm]{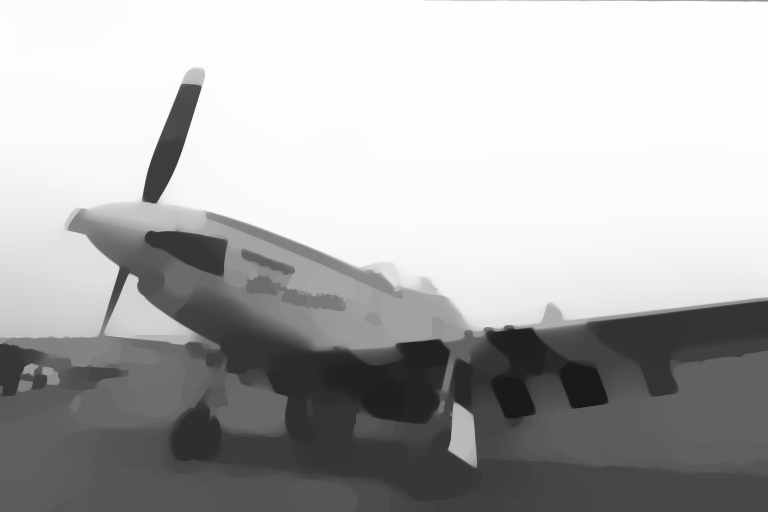}}
		 \\
		\subfloat[Original Monarch image]
		{\includegraphics[width=3cm]{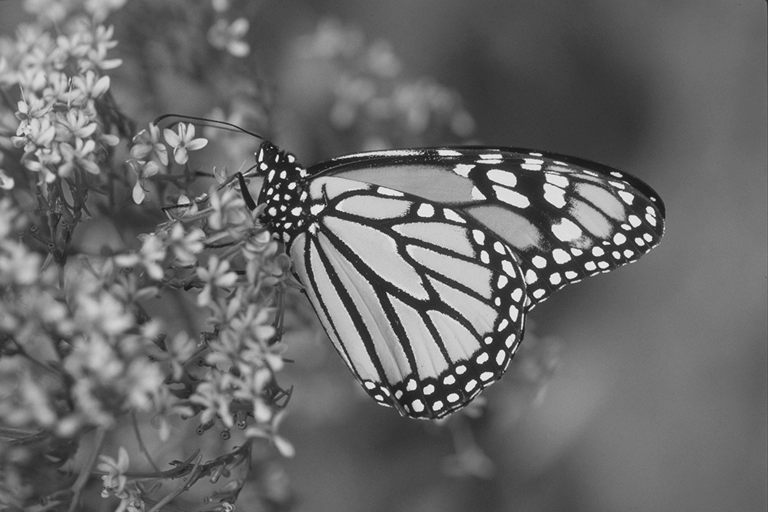}}\ \ 
		\subfloat[ITQ model with $\mu=100,\lambda=0.01$]
		{\includegraphics[width=3cm]{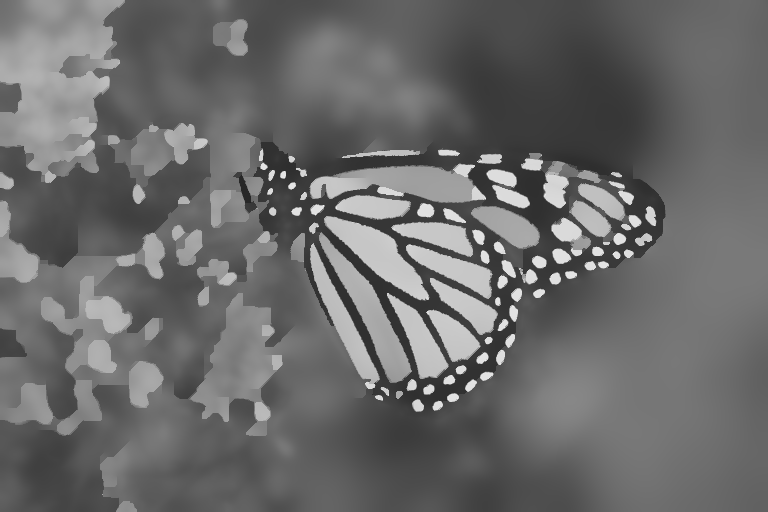}}\ \ 
		\subfloat[ATQ model with $\mu=100,\lambda=0.01$]
		{\includegraphics[width=3cm]{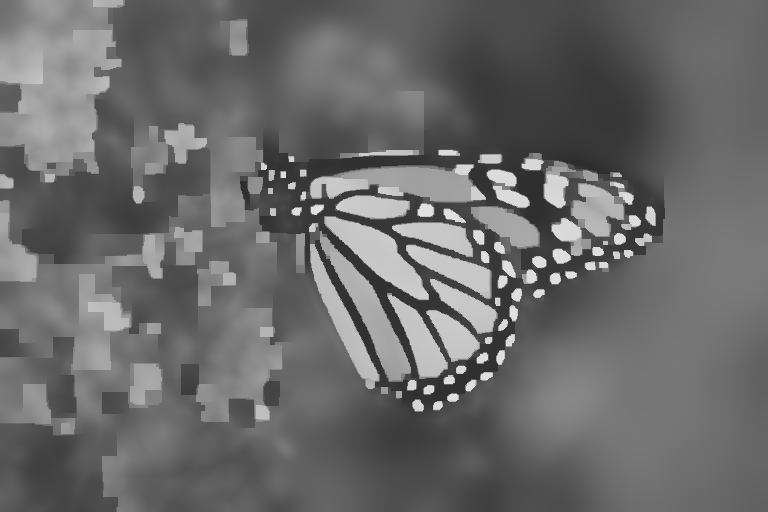}}\ \
		\subfloat[TV model with $\alpha=0.5$]
		{\includegraphics[width=3cm]{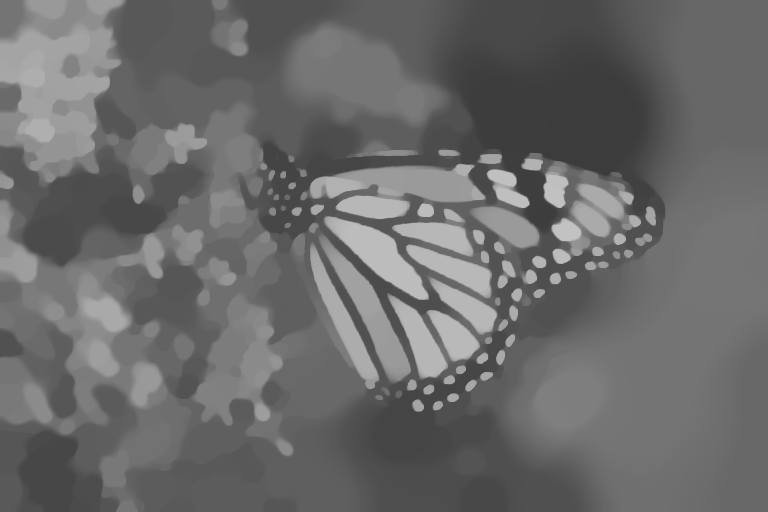}}
		\\
		\subfloat[Orignal Flowers color image]
		{\includegraphics[width=3cm]{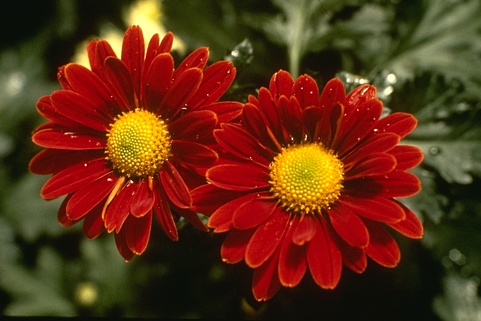}}\ \ 
		\subfloat[ITQ model with $\mu=500,\lambda=0.05$]
		{\includegraphics[width=3cm]{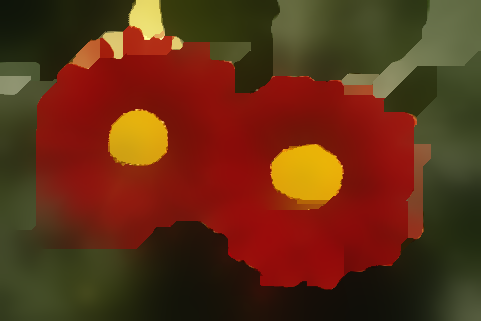}}\ \ 
		\subfloat[ATQ model with $\mu=500,\lambda=0.05$]
		{\includegraphics[width=3cm]{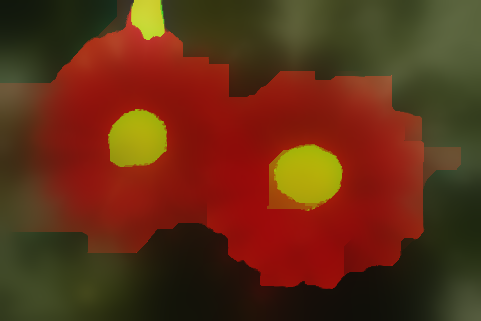}}\ \
		\subfloat[TV model with $\alpha=2$]
		{\includegraphics[width=3cm]{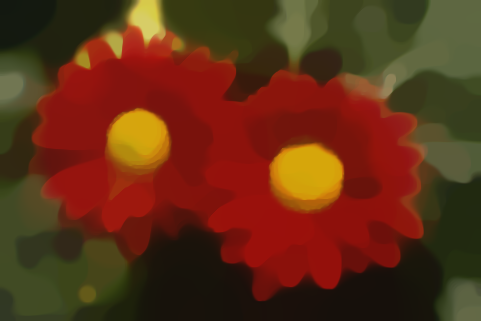}}
		\\
		\subfloat[Original Starfish color image]
		{\includegraphics[width=3cm]{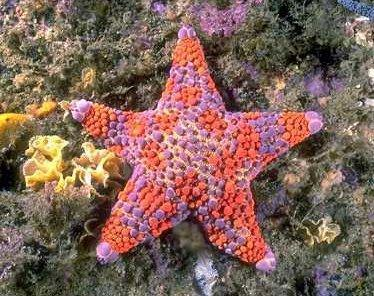}}\ \ 
		\subfloat[ITQ model with $\mu=300,\lambda=0.05$]
		{\includegraphics[width=3cm]{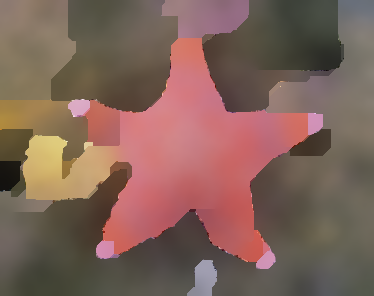}}\ \ 
		\subfloat[ATQ model with $\mu=300,\lambda=0.05$]
		{\includegraphics[width=3cm]{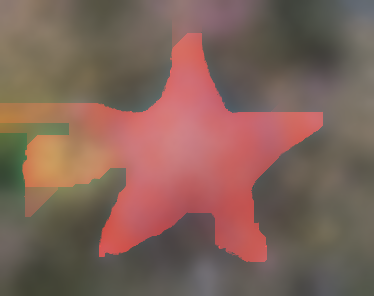}}\ \
		\subfloat[TV model with $\alpha=2$]
		{\includegraphics[width=3cm]{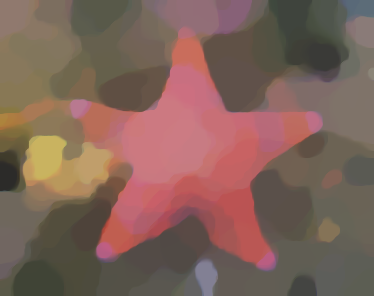}}
	\end{center}
	\caption{\scriptsize{Images (a), (d), (g), (j) show the original gray image Shooter with size $768\times 512$, gray image Monarch with size $768\times 512$, the color image Flowers with size $482\times321$,  the color image Starfish with size  $374 \times 296 $ respectively.  The images in the middle column are segmented by \eqref{equation:iso} while the images in the right column are segmented by \eqref{equation:ani}. Note that the parameters $\mu$ and $\lambda$ can be different for \eqref{equation:iso} and \eqref{equation:ani} even for the same image, since we choose the best parameter as we can find.} }
	\label{fig:seg}
\end{figure}

\begin{table}
	\centering
	\caption{Comparison for anisotropic image deblurring models. The first columns are different degraded images.  The Kodim251 and Llama1 are both  degraded with Gaussian filter with size $11 \times 11$ and Gaussian noise of variance $\sigma=.01$. The Kodim252 and Llama2 are both degraded with motion filter with size $40\times 50$ and Gaussian noise of variance $\sigma=.01$. The corresponding parameters are as follows. 
		We choose $\mu=0.01$ and $\lambda=10^{-4}$ for both ATQ or ATQ-Npre, $\alpha=10^{-3}$ for TV,
		$\alpha=2000$, $\beta=6000$, $\tau=0.5$ for TR-TV \cite{WLW} which turns out better than $\beta=600$ as in \cite{WLW},  $\alpha=400,\beta=500,\tau=0.1$ for TR-$l_2$ \cite{WLW}, {$\mu=2,{\lambda=5\times 10^{-3}}$ for Kodim251 and Llama1 cases, and {$\mu=5\times10^{-2}$, $\lambda=5 \times 10^{-5}$} for Kodim252 and Llama2 cases}. {The parameters of Ani-iso-DCA are $\mu = 5\times 10^{-2},\lambda=5\times 10^{-5}$ for Gaussian filter and $\mu=2,\lambda=5\times 10^{-3}$ for motion filter.} The Kodiam25 image is taken from \url{http://www.cs.albany.edu/~xypan/research/snr/Kodak.html}.	}
		
		\label{tab:ani:deblur}
		\begin{tabular}{|c|c|c|c|c|c|c|c|c|}
			\hline
			\multirow{2}{*}{ }	& \multicolumn{2}{c|}{ Kodim251} & \multicolumn{2}{c|}{Llama1 }& \multicolumn{2}{c|}{ Kodim252 }&\multicolumn{2}{c|}{Llama2} \\
			\hline 
			& PSNR & SSIM & PSNR & SSIM & PSNR & SSIM&PSNR &SSIM\\
			\hline
			ATQ model~&\bf 24.211&\bf 0.602&\bf 27.771&\bf 0.750&\bf 23.943&0.603&\bf 26.820&\bf 0.712\\
			\hline
			TV model~ &23.655&0.537&26.960&0.704&23.480&0.537&26.081&0.669  \\
			\hline
			TR-TV~&23.959&0.571&27.371&0.728&23.936&0.583&26.643&0.700\\
			\hline
			TR-$l_2$~  &24.112&0.600&27.284&0.707&23.919&\bf 0.604&26.450&0.671\\
			\hline
			ATQ-Npre~&24.134&0.599&27.654&0.747&23.835&0.600&26.758&0.710\\
			\hline
			Ani-iso-DCA~&21.832&0.406&24.349&0.562&20.685&0.356&22.962&0.514\\
			\hline
		\end{tabular}
	\end{table}

\begin{figure}
	\begin{center}
		\subfloat[Original image: llama]
		{\includegraphics[width=4.5cm]{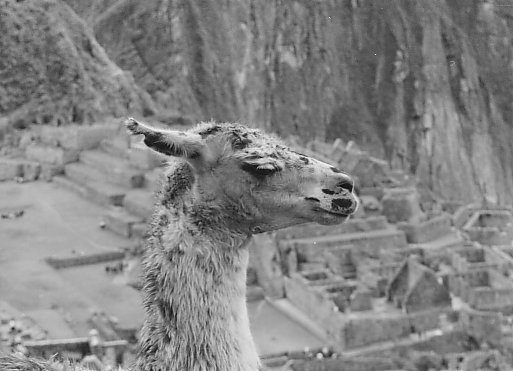}}\ \ 
		\subfloat[Degraded image: motion filter, $\sigma=0.01$]
		{\includegraphics[width=4.5cm]{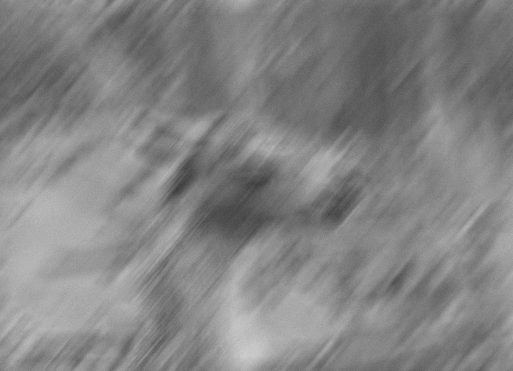}} \ \ 
		\subfloat[ATQ model: $\lambda=0.01$, $\mu=10^{-4}$]
		{\includegraphics[width=4.5cm]{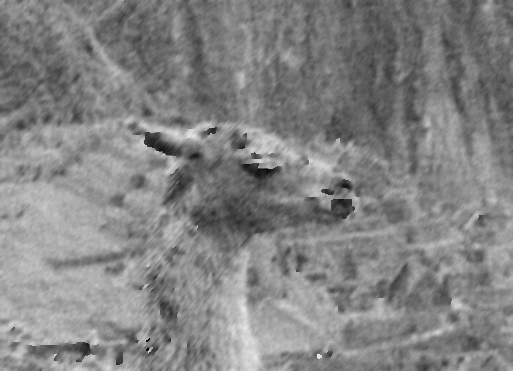}}\ \ 
		\subfloat[Original image: kodim25]
		{\includegraphics[width=4.5cm]{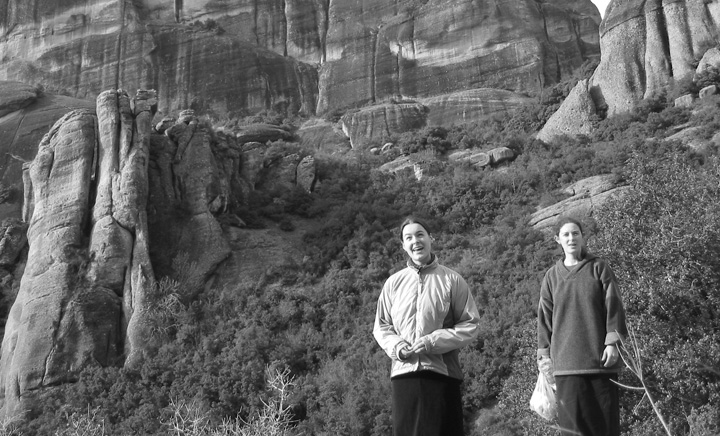}}\ \ 
		\subfloat[Degraded image: Gaussian filter, $\sigma=0.01$]
		{\includegraphics[width=4.5cm]{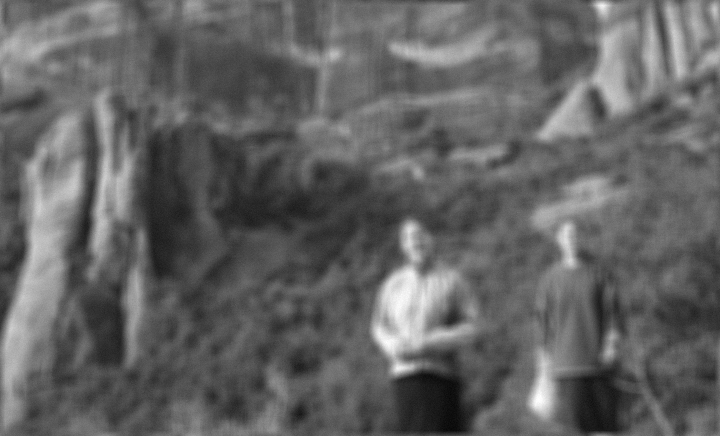}} \ \ 
		\subfloat[ATQ model: $\lambda=0.01$, $\mu=10^{-4}$]
		{\includegraphics[width=4.5cm]{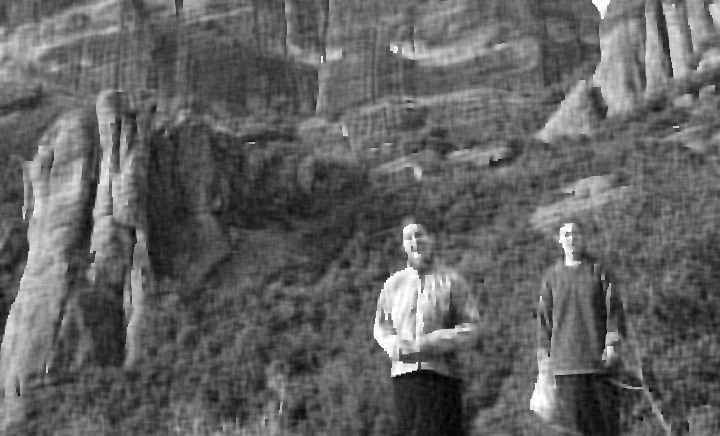}}\ \ 
	\end{center}
	\caption{\scriptsize{Images (a) or (d) show the original $513 \times 371$ Llama image and $720\times 436$ Kodim25 image. Image (b) is degraded Llama2 image by motion blur as in Table \ref{tab:ani:deblur}. Images (c)  shows the reconstructed image by \eqref{equation:ani} with parameters  $\mu=0.01,\lambda=10^{-4}$. Image (e) is degraded Kodim251 image with Guassian filter blur as in Table \ref{tab:ani:deblur}. Images (f)  shows the reconstructed image by \eqref{equation:ani} with parameters  $\mu=0.01,\lambda=10^{-4}$.}}
	\label{llama:itq:deburring}
\end{figure}

\section{Discussion and Conclusions}\label{sec:conclude}
In this paper, we give a thorough study on the proposed preconditioned DCA with extrapolation. We analysis it through the proximal DCA with metric proximal terms. We show that our framework is very efficient to deal with linear systems, while the global convergence and the local convergence rate can also be obtained. Numerical results show that the proposed preconditioned DCA is very efficient for truncated regularization applying to image denoising and image segmentation. We will consider other challenging tasks or applications with our preconditioned DCA framework.

\noindent
{\small
	\textbf{Acknowledgements}
	H. Sun acknowledges the support of
	NSF of China under grant No. \,11701563.	{The authors also would like to thank all the anonymous referees for their detailed comments that helped us to improve the manuscript.}
}



%
%


\begin{thebibliography}{99}
%
%

\bibitem{AB}  H. Attouch, J. Bolte, {\it On the convergence of the proximal algorithm
for nonsmooth functions involving analytic features}, Math. Program., Ser. B, 116, pp. 5--16, 2009, doi:10.1007/s10107-007-0133-5.

\bibitem{ABRC} H. Attouch, J. Bolte, P. Redont, A. Soubeyran, {\it Proximal alternating minimization and projection methods for nonconvex problems: an approach based on the   Kurdyka-\L{}ojasiewicz inequality}, Mathematics of Operations Research, 35(2), pp. 438--457, 2010, doi: 10.1287/moor.1100.0449.

\bibitem{ABS} H. Attouch, J. Bolte, B. F. Svaiter, {\it Convergence of descent methods for semi-algebraic and tame problems: proximal algorithms, forward–backward splitting, and regularized Gauss–Seidel methods}, Math. Program., 137, pp. 91--129, 2013,  doi:10.1007/s10107-011-0484-9.

\bibitem{AIG} M. Allain, J. Idier, Y. Goussard, {\it On global and local convergence of half-quadratic algorithms}, IEEE
Trans. Image Process., 15, pp. 1130--1142, 2006.

\bibitem{AA} G. Aubert, L. Vese,  {\it Variational methods in image restoration},  SIAM J. Numer. Anal., 34(5), pp. 1948--1979, 1997. 


\bibitem{BS1}  K. Bredies, H. Sun, \emph{Preconditioned Douglas-Rachford splitting methods for convex-concave saddle-point problems}, SIAM J. Numer. Anal., 53(1), pp. 421--444, 2015.


\bibitem{BS2} K. Bredies, H. Sun, \emph{Preconditioned {Douglas}--{Rachford} algorithms for {TV}- and {TGV}-regularized variational imaging problems},
J. Math. Imaging and Vis., 52(3), pp. 317--344, 2015.

\bibitem{BS3} K. Bredies, H. Sun, {\it  A proximal point analysis of the preconditioned alternating direction method of multipliers}, Journal of Optimization Theory and Applications, 173(3), pp. 878--907, 2017.


\bibitem{BST} J. Bolte, S. Sabach, M. Teboulle, {\it Proximal alternating linearized minimization
	for nonconvex and nonsmooth problems}, Math. Program., Ser. A, 146, pp. 459--494, 2014,
doi: 10.1007/s10107-013-0701-9.

\bibitem{BVZ} Y. Boykov, O. Veksler, R. Zabih, \emph{Fast approximate energy minimization via graph cuts},  IEEE Trans.
Pattern Anal. Mach. Intell., 23, pp. 1222--1239, 2001.

\bibitem{BZ} A. Blake, A. Zisserman,  Visual Reconstruction, The MIT Press, 1987.

\bibitem{CP} A. Chambolle, T. Pock, \emph{A first-order primal-dual algorithm for convex problems with applications to imaging},  J. Math. Imaging and Vis.,
40(1), pp. 120--145, 2011.

\bibitem{CLMS} R. Chan, A. Lanza, S. Morigi, F. Sgallari,  {\it Convex non-convex image segmentation}, Numer. Math., 138, pp. 635--680, 2018.

\bibitem{CH} A. Chambolle, {\it Image Segmentation by Variational Methods: Mumford and Shah Functional and the Discrete Approximations}, SIAM J. Appl. Math.,  55(3), pp. 827--863, 1995.

\bibitem{CBAB1} P. Charbonnier, L. Blanc-Feraud, G. Aubert, M. Barlaud, {\it Two deterministic half-quadratic regularization algorithms for computed imaging}, Proceedings of 1st International Conference on Image Processing, Austin, TX, vol.2, pp. 168-172, 1994, 
doi: 10.1109/ICIP.1994.413553.

\bibitem{CBAB2} P. Charbonnier, L. Blanc-Feraud, G. Aubert, M. Barlaud, {\it Deterministic 
edge-preserving regularization in computed imaging},  IEEE Trans. Image Process., 6, pp. 298--311,   {1997}.


\bibitem{CL} F. H. Clarke, Optimization and Nonsmooth Analysis, Vol. 5, Classics in Applied Mathematics, SIAM, Philadelphia, 1990.

\bibitem{GG} S. Geman, D. Geman,  {\it Stochastic relaxation, Gibbs distributions, and the Bayesian restoration of
images},     
IEEE Trans Pattern Anal Mach Intell., PAMI 6, pp. 721--741, 1984.

\bibitem{HRRS} F. R. Hampel, E. M. Ronchetti, P. J. Rousseeuw, W. A. Stahel, Robust Statistics: The Approach Based on Influence Functions, John Wiley \& Sons, Inc, 1986.

\bibitem{GY} D. Geman, C. Yang, {\it Nonlinear image recovery with half-quadratic regularization},  IEEE Trans. Image Process., 4(7), pp. 932--946, 1995, doi: 10.1109/83.392335.

\bibitem{HAD}  H. A. Le Thi, D. T. Pham, {\it Difference of convex functions algorithms (DCA)
for image restoration via a Markov random field model}, Optimization and Engineering, 18(4),
pp. 873--906, 2017, doi: 10.1007/s11081-017-9359-0.

\bibitem{HAD1}  H. A. Le Thi, D. T. Pham, {\it Convex analysis approach to D. C. Programming: theory, algorithms and applications}, Acta Mthematics Vietnamica, 22(1), pp. 289--355, 1997.

\bibitem{HAD2} H. A. Le Thi, D. T. Pham, {\it DC programming and DCA: thirty years of developments}, Math. Program., Ser. B, 169, pp. 5-68, 2018, doi: https://doi.org/10.1007/s10107-018-1235-y.

\bibitem{LP} G. Li, TK Pong, {\it Calculus of the exponent of Kurdyka-Lojasiewicz inequality and its applications to linear convergence of first-order methods},  Found. Comput. Math. 18:~pp. 1199--1232, 2018, doi: 10.1007/s10208-017-9366-8.

\bibitem{LCGN} {P. Li, W. Chen, H. Ge, and K. M. Ng, $l_1$-$\alpha l_2$ minimization methods for signal and
image reconstruction with impulsive noise removal, Inverse Problems, 36: 055009,
2020.}

\bibitem{LZOX} {Y. Lou, T. Zeng, S. Osher, J. Xin, {\it A weighted difference of anisotropic and isotropic total variation model for image processing}, SIAM J. Imag. Sci., 8, pp. 1798-823, 2015.}

\bibitem{LL} X.-D. Luo, Z.-Q. Luo, {\it Extension of Hoffman's Error Bound to Polynomial Systems}, SIAM Journal on Optimization, 4(2), pp. 383--392. 1994,  doi:10.1137/0804021. 

\bibitem{BM}B. Mordukhovich, Variational analysis and generalized differentiation I: Basic Theory, Grundlehren der Mathematischen, Wissenschaften, vol. 330, Springer, Heidelberg, 1998.


\bibitem{MD}  D. Mumford, A. Desolneux, Pattern Theory: The Stochastic Analysis of Real-World Signals, A K Peters, Ltd. Natick, Massachusetts, 2010. 

\bibitem{MS1}  D. Mumford, J. Shah, {\it Boundary detection by minimizing functionals}, I, in Proc. IEEE Conf. on Computer Vision and Pattern Recognition, San Francisco, CA, 1985.


\bibitem{MS2} D. Mumford, J. Shah, {\it Optimal approximation by piecewise smooth functions and associated variational problems}, Comm. Pure Appl. Math., 42, pp. 577--684, 1989.

\bibitem{MN}M. Nikolova, {\it Markovian reconstruction using a GNC approach}, IEEE Trans. Image Process., 8(9), pp. 1204--1220, 1999.


\bibitem{NN} M. Nikolova, MK. Ng,  {\it  Analysis of half-quadratic minimization methods for signal and image recovery}, SIAM J. Sci. Comput., 27(3), pp. 937--966, 2005.

\bibitem{Roc} R. T. Rockafellar, Convex Analysis,  Princeton University, 1970.

\bibitem{Roc1} R. T. Rockafellar, R. Wets, Variational Analysis. Grundlehren der Mathematischen, Wissenschaften, vol. 317, Springer, Heidelberg, 1998.


\bibitem{SA} Y. Saad, Iterative Methods for Sparse Linear Systems: Second Edition, Society for Industrial and Applied Mathematics, 2003.


\bibitem{Sch} S. Scholtes, Introduction to Piecewise Differentiable Equations, Springer Briefs in
Optimization, Springer, New York, 2012.

\bibitem{SC} E. Strekalovskiy,  D. Cremers, {\it  Real-time minimization of the
piecewise smooth Mumford-Shah functional}, In: Fleet D., Pajdla T., Schiele B., Tuytelaars T. (eds) Computer Vision – ECCV 2014, ECCV 2014, Lecture Notes in Computer Science, vol 8690, Springer, Cham.

\bibitem{WCP} B. Wen, X. Chen, TK. Pong, {\it A proximal difference-of-convex algorithm with extrapolation}, Comput. Optim. Appl. 69:~pp. 297--324, 2018, doi: 10.1007/s10589-017-9954-1.


\bibitem{GW} G. Winkler, Image Analysis, Random Fields and Markov Chain Monte Carlo Methods : A Mathematical Introduction, Springer-Verlag Berlin Heidelberg, Second Edition, 2003.

\bibitem{YU}  A. L. Yuille, A. Rangarajan, {\it The concave–convex procedure}, Neural Comput., 15(4), pp. 915--936, 2003.

\bibitem{WLW}  C. Wu, Z. Liu, S. Wen, {\it A general truncated regularization framework for contrast-preserving variational signal and image restoration: Motivation and implementation}, Science China Mathematics, 61(9):  pp. 1711--1732, 2018. 


\end{thebibliography}


\end{document}